\numberwithin{equation}{section}
\newtheorem{pro}{{Proposition}}[section]
\newtheorem{lemma}[pro]{{Lemma}}
\newtheorem{theorem}[pro]{{Theorem}}
\newtheorem{definition}[pro]{{Definition}}
\newtheorem{remark}[pro]{{Remark}}
\newtheorem{note}[pro]{{Note}}
\newtheorem{corollary}[pro]{{Corollary}}
\newtheorem{exam}[pro]{Example}
\newtheorem{thm:intro}{Theorem}
\newtheorem{cor:intro}{Corollary}
\newtheorem*{thm*}{Theorem }
\newcommand{\ra}{{\rightarrow}}
\newcommand{\lra}{{\longrightarrow}}
\newcommand{\ie}{{that is, }}
\newcommand{\cf}{{cf$.$\,}}
\newcommand{\cN}{{\mathcal{N}}}
\newcommand{\cA}{{\mathcal{A}}}
\newcommand{\cB}{{\mathcal{B}}}
\newcommand{\cG}{{\mathcal{G}}}
\newcommand{\cX}{{\mathcal{X}}}
\newcommand{\cY}{{\mathcal{Y}}}
\newcommand{\cZ}{{\mathcal{Z}}}
\newcommand{\al}{\alpha}
\newcommand{\be}{\beta}
\newcommand{\om}{\omega}
\newcommand{\lam}{\lambda}
\newcommand{\g}{{\mathfrak g}}
\renewcommand{\a}{{\mathfrak a}}
\newcommand{\h}{{\mathfrak h}}
\renewcommand{\k}{{\mathfrak k}}
\newcommand{\s}{{\mathfrak s}}
\newcommand{\n}{{\mathfrak n}}
\newcommand{\CC}{{\mathbb C}}
\newcommand{\HH}{{\mathbb H}}
\newcommand{\RR}{{\mathbb R}}
\newcommand{\ZZ}{{\mathbb Z}}
\newcommand{\SL}{\mathop{\rm SL}\nolimits}
\newcommand{\PSL}{\mathop{\rm PSL}\nolimits}
\newcommand{\Sim}{\mathop{\rm Sim}\nolimits}
\newcommand{\Aut}{\mathop{\rm Aut}\nolimits}
\newcommand{\Psh}{\operatorname*{Psh}\,}
\newcommand{\Pspm}{\operatorname*{Psh^{\pm}}\,}
\newcommand{\Diff}{\operatorname*{Diff}\,}
\newcommand{\Isom}{\operatorname*{Isom}\, }
\newcommand{\fA}{{\mathsf{A}}}
\newcommand{\fB}{{\mathsf{B}}}
\newcommand{\fG}{{\mathsf{G}}}
\newcommand{\fK}{{\mathsf{K}}}
\newcommand{\fR}{{\mathsf{R}}}
\newcommand{\fS}{{\mathsf{S}}}
\renewcommand{\forall}{\text{for all }} 
\begin{document}
\baselineskip 13pt 
\pagestyle{myheadings} 
\thispagestyle{empty}
\setcounter{page}{1}

\title[]{Locally Homogeneous Aspherical Sasaki Manifolds}
\author[]{Oliver Baues}
\address{Department of Mathematics\\ 
University of Fribourg\\
Chemin du Mus\' ee 23\\
CH-1700 Fribourg, Switzerland}
\email{oliver.baues@unifr.ch}

\author[]{Yoshinobu Kamishima}
\address{Department of Mathematics, Josai University\\
Keyaki-dai 1-1, Sakado, Saitama 350-0295, Japan}
\email{kami@tmu.ac.jp}

\date{\today}
\keywords{Locally homogeneous Sasaki manifold, Aspherical manifold,
Pseudo-Hermitian structure, CR-structure, Locally homogeneous K\"ahler manifold}
\subjclass[2010]{57S30, 53C12, 53C25} 

\begin{abstract}
Let $G/H$ be a contractible homogeneous Sasaki manifold. 
A compact locally homogeneous aspherical Sasaki manifold
$\Gamma\big\backslash G/H$ is by definition 
a quotient of $G/H$ by a discrete uniform subgroup 
$\Gamma\leq G$.
{We show that a compact locally homogeneous aspherical Sasaki manifold
 is always quasi-regular, that is,
 $\Gamma\big\backslash G/H$ is an $S^{1}$-Seifert bundle over a locally homogeneous 
 aspherical K\"ahler orbifold.}
We discuss the structure of the isometry group $\Isom(G/H)$
for a Sasaki metric of $G/H$ in relation with the pseudo-Hermitian group $\Psh (G/H)$ for the 
Sasaki structure of $G/H$. 
We show that a Sasaki Lie group $G$, when $\Gamma\big\backslash G$ is a compact locally homogeneous aspherical Sasaki manifold, is either the universal covering group of $\SL(2,\RR)$ or a modification  
of a Heisenberg nilpotent Lie group with its natural Sasaki structure. 
In addition, we classify all aspherical  Sasaki homogeneous spaces for semisimple Lie groups. 
\end{abstract}
\maketitle

\section{Introduction} 
Let $M$ be a smooth contact manifold with contact form $\om$.
Suppose that there exists a
complex structure $J$ on the contact bundle ${\rm ker}\, \om$
and that the Levi form $d\om\circ J$ is a positive definite Hermitian form.
Then $\{\om,J\}$ is called a \emph{pseudo-Hermitian structure} on $M$ 
and $\{{\rm ker}\,\om,J\}$ is a $CR$-structure as well.
The pair $\{\om,J\}$ assigns a Riemannian metric $g$ to $M$, where 
\begin{equation}\label{sasakimet}
g=\om\cdot \om +d\om\circ J.
\end{equation}
There are two typical,  closely related,    Lie groups on $(M,\{\om,J\})$. 
The group of pseudo-Hermitian transformations of 
$M$ is denoted by
\begin{equation*}\label{eq:pshH}
\Psh(M)=\{h\in \Diff(M)\mid
h^*\om=\om, \ h_*\circ J={J\circ h_* \ \mbox{on}\ {\rm ker}\, \om} \}. 
\end{equation*}
As usual
$\Isom\,(M)$ denotes the isometry group of $(M,g)$.
Obviously
 $$\Psh(M)\leq\Isom(M) \; .$$
Assume that the Reeb field $\cA$ for $\om$ generates a
one-parameter group $T$ of holomorphic transformations on a
$CR$-manifold $(M,\{{\rm ker}\, \om,J\})$, \ie
$$ T\leq\Psh(M) \, . $$ Then $(M,\{\om,J\})$ is said to 
be a \emph{standard pseudo-Hermitian  manifold}. 
In this case, the vector field $\cA$ is  a Killing field of unit length with respect to $g$, and the Riemannian manifold 
$(M,g)$ is also called a \emph{Sasaki manifold} equipped with \emph{Sasaki metric $g$} and 
\emph{structure field}  $\cA$. 
If $\cA$ is a complete vector field with a global flow $T$ which 
%
acts freely and properly on $M$, 
$(M, \{g, \cA\})$ is said to be a \emph{regular  Sasaki manifold}. 
{Note that the Sasaki metric structure $(M, \{g, \cA\})$ determines the 
standard pseudo-Hermitian structure  $(M,\{\om,J\})$ uniquely.}


\smallskip The pseudo-Hermitian group $\Psh(M)$ and isometry group $\Isom(M)$ of a Sasaki manifold are closely related. 
Since the Reeb vector field $\cA$ is determined by $\om$ alone, we have  
$$h_*\cA=\cA\, , \;  \forall  h\in\Psh(M) \; . $$
  Therefore, the Reeb flow $T$ belongs to the center 
of $\Psh(M)$,
\ie 
\begin{equation}\label{CPsh}
\Psh(M)=C_{\Psh(M)}(T) \; .
\end{equation}
Similarly, if $C_{\Isom (M)}(T)$ denotes the centralizer of $T$ in
$\Isom(M)$, using \eqref{sasakimet},
$$ \displaystyle \Psh(M)=C_{\Isom (M)}(T)$$  
follows easily, as well.

In general, 
the group $\Isom(M)$ acts on the set of Sasaki structures $ \{g, \cA\} $ with fixed metric $g$. Furthermore, if $(M,g)$ is not isometrically covered by a round sphere,  the set of Sasaki structures with metric $g$ either consists of two elements $\{\cA, -\cA\}$, or $M$ is a three-Sasaki manifold, admitting  three linear independent Sasaki structures for $g$.  In the latter case, $M$ is compact with finite fundamental group. For these results,  see  \cite{tanno,kashi,tachiyu}. 
Thus, unless $M$ is compact with finite fundamental group, a complete Sasaki manifold always satisfies  
 $$\Isom(M)= \Pspm(M) =  \{ h \in \Isom(M) \mid h^{*} \cA = \pm \cA \}  \; . $$ 

\smallskip
Call a Sasaki manifold $M$ a \emph{homogeneous Sasaki manifold}
if  $\Psh(M)$ acts transitively on $M$.  Accordingly, a homogeneous space  $G/H$ is called a 
\emph{homogeneous Sasaki manifold} if $G/H$ is a Sasaki manifold and the 
action of $G$ factors over $\Psh(G/H)$. Note that any homogeneous Sasaki manifold is also 
a regular Sasaki manifold. 


\subsection{Locally homogeneous aspherical Sasaki manifolds} 
In the following we shall usually assume that 
$G$ acts effectively on $G/H$ and thereby identify $G$ with a closed subgroup of 
$\Psh(G/H)$ whenever suitable. 

\smallskip 
A  \emph{locally homogeneous} Sasaki manifold  is a quotient space 
 $$ M \, = \, \Gamma\, \big\backslash G/H$$ 
of a homogeneous Sasaki manifold $G/H$ by a discrete 
subgroup $\Gamma$ of $G$.  The manifold $M$ is called \emph{aspherical} if its universal 
cover $X$ is \emph{contractible}. In this paper we take up the structure of compact 
\emph{locally homogeneous aspherical} Sasaki manifolds $M$. 

\smallskip 
Setting the stage for the main structure result 
on compact \emph{locally homogeneous aspherical} Sasaki manifolds,
we note 
the following facts:  

\smallskip 
Let $X =G/H$ be a contractible homogeneous Sasaki manifold.
Then the  Reeb flow $T$ on $X$ is isomorphic to the real line 
 $\RR$ and it is acting freely and properly on $X$. Moreover, the  homogeneous pseudo-Hermitian structure on $X$ induces a unique \emph{homogeneous} K\"ahler structure on the quotient manifold $$ W = X/ T$$  such that the projection map  $X \to W$ is a principal bundle projection which is pseudo-Hermitian (that is, $X \to W$ is horizontally holomorphic and horizontally isometric). With this structure the homogeneous K\"ahler manifold $W$  will be called the \emph{K\"ahler quotient} of $X$. (Compare  
Proposition \ref{pr:noncompact}, Theorem \ref{SasakiKprin})

\smallskip 
Let $W$ be any  K\"ahler manifold. Then we denote $\Isom^{\! \pm}_{\! h}(W)$  the subgroup of $\Isom(W)$ that consists of isometries 
which are either holomorphic or anti-holomorphic. Furthermore,  $\Isom_{\! h}(W)$ denotes the subgroup of holomorphic (or K\"ahler-) isometries of $W$.

\smallskip 
Recall that a Lie group is called \emph{unimodular} if its Haar measure is  biinvariant. Any Lie group $G$ which admits a uniform lattice $\Gamma$ is unimodular. 
The main structure result on locally homogeneous aspherical Sasaki manifolds and their isometry groups 
is stated in the following two results: 

\begin{thm:intro}\label{iso=equal} 
Let $X = G/H$ be a contractible homogeneous Sasaki manifold of a unimodular Lie group $G$. 
 Then the following hold:
\begin{enumerate}
\item The K\"ahler quotient\/ $W$ of\/  $X$ 
 is a product of a unitary space $\CC^{k}$ with a 
 bounded symmetric domain $D$.   
\item The Reeb flow $T$ is a normal subgroup of\/ $\Isom(X)$ and there exists an 
induced quotient homomorphism $$\phi: \Isom(X) \to \Isom^{\! \pm}_{\! h}(W) \; , $$ 
which is onto and maps $\Psh(X)$ onto\/ $\Isom_{h}(W)$ with  kernel $T$. 
\item There exists an anti pseudo-Hermitian involution $\tau$ of $X$ such that 
$$\displaystyle\Isom(G/H)=  \Pspm(G/H) =  \Psh(G/H) \rtimes \langle \tau \rangle  
  \; .$$
\item 
The identity component of the pseudo-Hermitian group of $X$ satisfies 
$$  \Psh(G/H)^{0}  =  \displaystyle\Isom(G/H)^{0} =  \left(\cN\rtimes {\rm U}(k)\right) \cdot  S \; , $$ 
where
$\cN$ is a $2k+1$-dimensional Heisenberg Lie group 
and $S$ is a normal semisimple Lie subgroup which covers 
the identity component  $$S_0 = \Isom(D)^{0}$$ 
of the  isometry group of the  symmetric bounded domain $D$.  
Moreover,  $S$ has infinite cyclic center $\Lambda$,  and $$ S \cap \cN = S \cap T = \Lambda \; .$$

\end{enumerate}
\end{thm:intro}

\smallskip 
Building on Theorem \ref{iso=equal} we can deduce: 

\begin{cor:intro} \label{cor:quasi_reg} 
 Let\/  $M = \Gamma\big\backslash G/H$ be a compact locally homogeneous aspherical
Sasaki manifold. 
Then the  coset space 
$\displaystyle \Gamma\big\backslash G/H$ admits 
an $S^1$-bundle over a locally homogeneous aspherical K\"ahler orbifold 
\begin{equation} \label{eq:reg_fibering}
\begin{CD}
S^1@>>>\Gamma\big\backslash  G/H@>>>\phi(\Gamma)\big\backslash W  \; , 
\end{CD}\end{equation}
in which $S^1$ induces the Reeb field.  In particular,  the Sasaki manifold $M$ is {quasi-regular}.  
\end{cor:intro} 
 
\smallskip 
\begin{remark} \label{rem:seifert}
The bundle in \eqref{eq:reg_fibering}  is called  a \emph{Seifert fibering}. 
Here, some finite covering space $\Gamma_{0 }\big\backslash G/H$, with $\Gamma_{0} \leq \Gamma$ a finite index subgroup, 
is a non-trivial $S^1$-bundle over a  K\"ahler manifold 
$\phi(\Gamma_{0})\big\backslash W$.
Note, in addition, that for any Sasaki manifold $M = \Gamma\big\backslash G/H$ as above, 
$\Psh(\Gamma\big\backslash G/H)^{0}$ contains the flow of the Reeb field.  This flow is a compact one-parameter group $S^{1}$ acting almost freely on $M$ and it is giving rise to the bundle \eqref{eq:reg_fibering}. 
{Moreover, since the Sasaki structure on $M$ arises from a connection form, the K\"ahler 
class of $\phi(\Gamma_{0})\big\backslash W$ represents the characteristic class of the circle bundle.} 
\end{remark}

\smallskip 
We further remark: 
\begin{enumerate}
\item[(5)]  
When the anti-holomorphic isometry $\tau$ of $X$ from Theorem \ref{iso=equal} 
normalizes $\Gamma$, we get 
$
\Isom\,(\Gamma\big\backslash G/H)=\Psh(\Gamma\big\backslash G/H)\rtimes
\ZZ_2$, otherwise we have 
$\displaystyle
\Isom\,(\Gamma\big\backslash G/H)=\Psh(\Gamma\big\backslash G/H)$.
\end{enumerate}

\smallskip 
Let $\cN$ denote the $2n+1$-dimensional Heisenberg group with its natural Sasaki metric. 
Using (5) above we also get: 
\begin{enumerate}
\item[(6)]  There exists a compact locally homogeneous aspherical Riemannian manifold
$$  M  = \pi\backslash\cN \, , $$ whose metric is locally a Sasaki metric (that is, it is induced
 from the left-invariant 
Sasaki metric on $\cN$). But $M$ with metric $g$ is not a Sasaki manifold itself. 
\end{enumerate}

\subsubsection{The case of solvable fundamental group} 
We suppose that the fundamental group of the compact aspherical manifold $M$ is virtually solvable.
In this case, if $M$ supports a locally homogeneous Sasaki structure, then Theorem \ref{iso=equal} 
implies that $M$ is finitely covered by a \emph{Heisenberg manifold} $$  \Delta \,  \backslash \, \cN \; ,$$ 
where $\Delta \leq \cN$ is a uniform discrete subgroup of $\cN$. Moreover,  $M$ 
is a non-trivial circle bundle over a compact  flat  
K\"ahler manifold, which in turn is finitely covered by a complex torus $\CC^k / \Lambda$. 
As a matter of fact,  \emph{any} compact 
aspherical K\"ahler manifold is biholomorphic to a flat K\"ahler manifold (see \cite[Theorem 0.2]{bc} and the references therein). As a consequence, any regular Sasaki manifold $M$ is of the above type as well, and 
it admits a locally homogeneous Sasaki structure: 

\begin{cor:intro} \label{cor:solvable}  Let $M$ be a regular  compact aspherical Sasaki manifold with 
virtually solvable fundamental group. Then the following hold: 
\begin{enumerate} 
\item The manifold $M$ is a circle bundle over a K\"ahler manifold that is 
biholomorphic to a flat K\"ahler manifold.  
\item A finite cover of $M$ is diffeomorphic to a Heisenberg manifold. 
\end{enumerate}  
Moreover, the Sasaki structure on $M$ can be deformed (via regular Sasaki structures) to 
a locally homogeneous Sasaki structure. 
\end{cor:intro} 


\subsection{Contractible Sasaki  Lie groups and compact quotients}
We call a Lie group $G$ a \emph{Sasaki group} if it admits a left-invariant Sasaki structure.
Equivalently,  $G$ acts simply transitively by pseudo-Hermitian transformations  on a Sasaki manifold $X$.

\smallskip 
A prominent example of a Sasaki Lie group is the $2n+1$-dimensional \emph{Heisenberg Lie group} $\cN$. 
The Lie group  $\cN$ arises as  a non-trivial central extension of the form $$ \RR \, \to \, \cN \, \to \,\CC^{n} \; , $$
and a natural Sasaki structure on $\cN$ is obtained by a left-invariant  connection form 
which is   associated to this  central extension. 

More generally, we shall  introduce a family 
of  simply connected $2n+1$-dimensional solvable Sasaki Lie groups
$$ \; \cN(k,l)\; , \; k+l = n,  $$ 
called \emph{Heisenberg modifications}. These groups are deformations of $\cN$ 
in $\cN\rtimes T^k$, where $T^{k} \leq \mathrm{U}(n)$ is a compact torus.  
 (\cf Definition \ref{modicN}).  
 
Another noteworthy contractible Lie group which is Sasaki  is
 $$\widetilde{{\rm SL}(2,\RR)} \;  ,$$ 
 the universal covering group of\/ ${\rm SL}(2,\RR)$. Indeed, take \emph{any} left-invariant 
 metric $g$ on  $\widetilde{{\rm SL}(2,\RR)}$ with the additional property that $g$
 is also right-invariant by the one-parameter 
 subgroup  $\widetilde{{\rm SO}(2,\RR)}$. Then the Riemannian submersion map 
 $$ \widetilde {{\rm SL}(2,\RR)} \; \to  \; \widetilde{{\rm SL}(2,\RR)} \big/ \widetilde{{\rm SO}(2,\RR)} =  \HH^1_\CC$$ 
is defined and it is a principal bundle with group $\widetilde{{\rm SO}(2,\RR)}= \RR$ over a Riemannian homogeneous  space $ \HH^1_\CC$
of constant negative curvature.  The metric
 $g$ defines a unique left-invariant connection form 
 $\omega$,  which satisfies \eqref{sasakimet} and has the property that the Reeb field is left-invariant and tangent to the subgroup $\widetilde{{\rm SO}(2,\RR)}$. The isomorphism classes of Sasaki structures thus obtained are parametrized by the curvature of the base.
\medskip 

As an application of our methods we prove: 
\begin{thm:intro}\label{sasakimodi}
Let $G$ be a unimodular contractible Sasaki Lie group. 
Then as a Sasaki Lie group $G$ is isomorphic to either $\cN(k,l)$ or\/ $\widetilde {{\rm SL}(2,\RR)}$ with one of the left invariant Sasaki structures as introduced above. 
(That is,  $G$ admits a pseudo-Hermitian  isomorphism 
to either $\cN(k,l)$ or $\widetilde{{\rm SL}(2,\RR)}$ with a standard Sasaki structure.)
\end{thm:intro}


\begin{remark} As introduced above the family of all Sasaki Lie groups $\cN(k,l)$ is in one to one correspondence with the set of isomorphism classes of flat K\"ahler Lie groups. Compare Section \ref{sec:Hmod}.
{For a discussion of the structure of flat K\"ahler Lie groups,  
see for example \cite{jh} or  \cite{bdf}.}  
\end{remark}

\begin{remark} When dropping the assumption of contractibility, the compact group 
$\mathrm{SU(2)}$ appears  as another unimodular Sasaki Lie group. This group is fibering over the projective line $\mathrm{P}^{1} \CC$, and the example 
is dual to the  Sasaki Lie group\/ $\widetilde{{\rm SL}(2,\RR)}$.  
The two groups are known to be the only simply connected semisimple Lie groups 
which admit a left-invariant Sasaki structure, \cf\/  \cite[Theorem 5]{bw}. 
\end{remark}


Any Lie group $G$ which admits a discrete uniform subgroup $\Delta$ must be unimodular, and 
if such $G$ admits the structure of a Sasaki Lie group then 
 the quotient manifold 
$$ \Delta \, \big\backslash \,  G$$
inherits the structure of a compact locally homogeneous Sasaki manifold. 

\smallskip 
Thus, combining Theorem \ref{sasakimodi} with Corollary \ref{cor:quasi_reg} we obtain: 

\begin{cor:intro} \label{cor: sasakimodi}
Every compact locally homogeneous 
aspherical Sasaki manifold which is of the form $$\Delta \, \backslash G$$ 
is either a Seifert manifold,  which is an $S^1$-bundle over a  
hyperbolic two-orbifold, or it is a Seifert manifold  which is an $S^1$-bundle over a 
flat K\"ahler manifold (which is a complex  torus bundle over a complex torus). 
\end{cor:intro}

\subsection{Sasaki homogeneous spaces of semisimple Lie groups}
Here we consider the question which semisimple Lie groups act transitively by pseudo-Hermitian transformations on a contractible (or, more generally, aspherical) Sasaki manifold. 
The classification of such groups and of the corresponding homogeneous spaces is contained in Theorem \ref{thm:semisimple_hom0} following below.  

\smallskip Let $D$ be a bounded symmetric domain,  equipped with its natural Bergman Riemannian metric.  Then its isometry group  $$S_0 = \Isom(D)^0$$ is a semisimple Lie group which is called a group of Hermitian type, and 
$$    D =  S_{0}/K_{0}$$ 
is a Riemannian symmetric space with respect to this metric, and a homogeneous K\"ahler manifold, as well.
Moreover, $K_{0}$ is a maximal compact subgroup of $S_{0}$. 

\begin{thm:intro}  \label{thm:semisimple_hom0}
For any symmetric bounded domain $D = S_{0}/K_{0}$, there exists a unique semisimple 
Lie group $S$ with infinite cyclic center, which is covering $S_{0}$, and gives rise to a contractible 
Sasaki homogeneous space 
$$ X_{S} = S/K \, $$ 
with K\"ahler quotient $D$. 
Moreover, any contractible homogeneous Sasaki manifold of a semisimple Lie group is of this type. 
\end{thm:intro} 

{\bf Addendum:} In the theorem, $K$ is a maximal compact subgroup of $S$, and $S \leq \Psh(X_S)$ is acting faithfully on $X_S$. The K\"ahler quotient $D$ is a homogeneous K\"ahler manifold whose complex structure is biholomorphic to a bounded symmetric domain. It carries an invariant symmetric K\"ahler Riemannian 
metric, which is unique up to scaling on irreducible factors of the homogeneous space $D$. 

\smallskip 
As a consequence of  Theorem \ref{thm:semisimple_hom0} any Lie group of Hermitian type acts as a transitive group of isometries on an aspherical Sasaki space:  

\begin{cor:intro} \label{cor:semisimple_hom0} For any semisimple Lie group $S_0$ of Hermitian type, there exists a unique 
Sasaki homogeneous space  
$$ Y = S_{0}/K_{1}  \, ,$$ 
where $Y$ is a circle bundle over
the symmetric bounded domain $D = S_0/K_0$. 
\end{cor:intro}

{Note that, in Theorem \ref{thm:semisimple_hom0} and Corollary \ref{cor:semisimple_hom0} the Sasaki structure on $X$, respectively $Y$,  is unique up to the choice of 
an $S_{0}$-invariant (and also symmetric) K\"ahler metric on $D$.}   

\medskip 
\medskip 
\paragraph{\em The paper is organized as follows}

Starting in   Section \ref{sec:prelim},  we collect and explain 
some useful basic facts on regular Sasaki manifolds, including 
the Boothby-Wang fibration and the join construction. 

In Section \ref{sec:isocontra} we discuss the lifting of K\"ahler isometries 
and the role of gauge transformations in the  Boothby-Wang fibration of a 
contractible Sasaki manifold.

We use these  facts to show that  every contractible homogeneous K\"ahler manifold determines a \emph{unique}  contractible homogeneous Sasaki manifold. Also the associated presentations 
 of a homogeneous Sasaki manifold by transitive groups of pseudo-Hermitian transformation are discussed in Section~\ref{sec:hsasaki}. 

Section \ref{sect:homogeneous_k} is devoted to the study of homogeneous contractible K\"ahler manifolds of unimodular Lie groups. Their classification is derived from the Dorfmeister-Nakajima fundamental holomorphic fiber bundle of a homogeneous K\"ahler manifold. 

The structure of locally homogeneous aspherical Sasaki manifolds is picked up in Section \ref{sec:lchom_sas}.  
We establish in Corollary \ref{cor:quasi_reg} that a compact locally homogeneous aspherical Sasaki manifold is always quasi-regular over a compact  orbifold which is modeled on a homogeneous contractible K\"ahler manifold. 
The relevant global results are summarized in Theorem \ref{iso=equal} and its proof. {We also give the proof of  Corollary \ref{cor:solvable} in Section \ref{sect:solvable_case}, see in particular Proposition \ref{pro:Sasaki_def}.}

In Section \ref{sec:classifications} we turn our interest to the classification problem for global model spaces 
of  locally homogeneous Sasaki manifolds:  in particular,  we classify contractible Sasaki Lie groups and contractible Sasaki homogeneous spaces of semisimple Lie groups. In the course, we prove Theorem \ref{sasakimodi} and 
Theorem \ref{thm:semisimple_hom0}. 

In Section \ref{sec:2}
we construct  further explicit examples of locally homogeneous aspherical Sasaki manifolds.

\smallskip 
Refer to \cite{sa}, \cite{bl}, 
\cite{bg} for background on Sasaki metric structures in general.

\section{Preliminaries} \label{sec:prelim}
Let $X = (X, \{\om,J\})$ be a Sasaki manifold with Reeb flow $T$. 

\subsection{Regular Sasaki manifolds}
The Sasaki manifold $X$ is called regular if the Reeb flow  $T$ is complete and $T$ acts freely and properly on $X$. In this situation, either $T = \RR$, or $T =S^{1}$ is a circle group. Moreover, $$W = X/T$$  is a smooth manifold and $X$ is a principal bundle over $W$ with group $T$. 

\begin{exam} Let $X =G/H$ be a homogeneous Sasaki manifold. Then $X$ is regular. (See \cite{bw} and 
Section \ref{sec:hsasaki} below.)
\end{exam}

For the following, see  \cite{bw}: 

\begin{pro}[Boothby-Wang fibration] \label{pr:bw_fib}
Let $X$ be a regular Sasaki ma\-ni\-fold with Reeb flow $T$. Then there is an associated principal bundle 
$$  T \,  \to \, X \stackrel{q}{\to} \, W $$  
 over a K\"ahler manifold  $(W,\Omega, J)$ such that the induced isomorphism
  $$  q_*: \; {\rm ker}\,\om \, \to \, TW$$  is holomorphic and the K\"ahler form on the base 
is satisfying the equation  
\begin{equation} \label{eq:dexact0}
q^*\Omega=d\om \; . 
\end{equation}
Furthermore, there is a natural induced homomorphism 
\begin{equation} \label{eq:homphi}
\Psh (X) \stackrel{\phi}{\longrightarrow} \Isom_{\! h}(W)  
\end{equation}
with kernel $T$, which is satisfying $q \circ \tilde h = \phi(\tilde h) \circ q$, for all $\tilde h \in \Psh (X)$. 
\end{pro}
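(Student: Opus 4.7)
The plan is to realize $q: X \to W = X/T$ as a principal $T$-bundle and then transport the pseudo-Hermitian data on $X$ to a K\"ahler structure downstairs. Since the Reeb flow $T$ acts freely and properly by hypothesis, standard slice-theory gives that $W$ is a smooth manifold and $q$ is a smooth principal $T$-bundle. The contact form $\om$ acts as a connection form for this bundle: indeed $\om(\cA) = 1$ identifies the fundamental vertical vector field with the generator of $T$, and $\cA$ being pseudo-Hermitian (standardness) gives $\mathcal{L}_\cA \om = 0$, i.e.\ $T$-invariance. The horizontal distribution of this connection is exactly $\ker\om$.

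Next I would construct the K\"ahler datum on $W$. The relations $\iota_\cA d\om = 0$ and $\mathcal{L}_\cA d\om = 0$ show that $d\om$ is a basic $2$-form, hence there is a unique $2$-form $\Omega$ on $W$ with $q^*\Omega = d\om$; closedness $d\Omega = 0$ follows at once from $d^2\om = 0$ and injectivity of $q^*$. The complex structure $J$ on $\ker\om$ is $T$-invariant (again since $\cA \in \Psh(X)$), so it descends to an almost complex structure $J_W$ on $W$ for which $q_*\colon \ker\om \to TW$ is $\CC$-linear, proving the holomorphicity assertion. Integrability of $J_W$ follows from the CR-integrability of $(\ker\om, J)$ together with the fact that the vertical correction of the bracket of horizontal lifts lies along $\cA$ and hence vanishes under $J_W$-computations; concretely, the Nijenhuis tensor $N_{J_W}$ pulls back to a horizontal tensor on $X$ whose components along $\ker\om$ vanish by CR-integrability. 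Finally, the Levi form $d\om(\,\cdot\,,J\,\cdot\,)$ is positive definite Hermitian on $\ker\om$ by the Sasaki hypothesis, and this identity descends to give $\Omega(\,\cdot\,, J_W\,\cdot\,)$ positive definite Hermitian on $TW$, yielding the K\"ahler manifold $(W,\Omega,J_W)$.

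For the homomorphism $\phi$, given $h \in \Psh(X)$ the identity \eqref{CPsh} says $h$ commutes with the Reeb flow $T$, so $h$ permutes $T$-orbits and descends to a diffeomorphism $\bar h$ of $W$ characterized by $q\circ h = \bar h \circ q$. Because $h^*\om = \om$ and $h_* \circ J = J \circ h_*$ on $\ker\om$, taking $d$ and pushing down gives $\bar h^* \Omega = \Omega$ and $\bar h_* \circ J_W = J_W \circ \bar h_*$, so $\bar h \in \Isom_{\! h}(W)$. The assignment $h \mapsto \bar h$ is clearly a group homomorphism, and its kernel consists exactly of those $h$ preserving every $T$-orbit; since such an $h$ also preserves $\om$ and agrees with an element of $T$ on each orbit, a connectedness/continuity argument forces $h \in T$, giving $\ker \phi = T$.

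The only genuinely delicate point is the integrability of $J_W$; the other pieces are essentially bookkeeping about basic forms and equivariant descent. Once one is comfortable that the standard pseudo-Hermitian hypothesis encodes CR-integrability of $(\ker\om,J)$ together with invariance under the Reeb flow, integrability of $J_W$ on $W$ is the natural downstairs shadow, and the rest of the statement assembles itself.
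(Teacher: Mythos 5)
Your construction of the K\"ahler quotient is correct and is essentially the paper's argument: descend $d\om$ to $\Omega$ using $\iota_\cA d\om=0$ and $T$-invariance, descend $J$ using that the Reeb flow is holomorphic on $\ker\om$, get integrability of $J_W$ from CR-integrability $[T^{1,0},T^{1,0}]\subseteq T^{1,0}$ (your remark that the vertical correction to brackets of horizontal lifts is killed by $q_*$ is the right way to make the paper's one-line integrability claim precise), and descend each $h\in\Psh(X)=C_{\Psh(X)}(T)$ to a holomorphic isometry of $W$. Where you genuinely diverge is the identification $\ker\phi=T$. The paper argues Riemannian-geometrically: given $\tilde h$ covering $\mathrm{id}_W$, compose with an element of $T$ to produce a fixed point $x$; since $\tilde h_*\cA=\cA$ and $q_*\circ\tilde h_*=q_*$ on $\ker\om$, the differential at $x$ is the identity of $T_xX$, and an isometry is determined by its one-jet at a point. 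You instead treat a kernel element as a gauge transformation: by freeness and transitivity of $T$ on fibers and commutation with $T$, such an $h$ has the form $x\mapsto \varphi_{\lambda(q(x))}\,x$ for a smooth $\lambda:W\to T$, and then $h^*\om=\om+q^*d\lambda$, so $h^*\om=\om$ forces $d\lambda=0$ and $\lambda$ constant on connected $W$, i.e.\ $h\in T$. This is exactly the mechanism of the paper's later Proposition \ref{pr:gauge}, run in reverse; you should spell out the identity $h^*\om=\om+q^*d\lambda$, since that is what your ``connectedness/continuity argument'' actually is, but once stated it closes the gap. The trade-off: your gauge argument uses only the contact form and works verbatim for $T=\RR$ or $T=S^1$, with no appeal to the Sasaki metric, whereas the paper's one-jet rigidity argument is shorter given that $X$ is already Riemannian and that elements of $\Psh(X)$ are isometries. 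Both are complete proofs of the statement.
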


With the above conditions satisfied,  
we call $(W,\Omega, J)$ the \emph{K\"ahler quotient} of the regular Sasaki manifold $X$.
Also we let 
$$ \displaystyle{\Isom}_{\! h}(W)={\Isom}(W,\Omega,J)$$  denote 
the group of holomorphic isometries of the K\"ahler quotient $W$.   

\begin{proof}[Proof of Proposition \ref{pr:bw_fib}]
The projection $q$ induces an 
isomorphism $${q}_*:{\ker}\, \om\lra
T W$$ at each point. Since
$\om$ is invariant under $T$, $\om$ induces a well defined 
$2$-form $\Omega$ on $W$ such that 
$$d\om(\cX,\cY)=\Omega({q}_* \cX,{q}_*\cY)\, , $$ 
for all horizontal vector fields $ \cX, \cY\in{\ker}\, \om$ that are horizontal lifts.
As $$\iota_\cA \, d\om=0 \, ,$$ it follows that $d\om={q}^*\Omega$ 
and so $d\Omega=0$.
Since the Reeb flow $T$ is holomorphic on ${\ker}\,\om$, using $J$ on ${\ker}\,\om$, ${q}_*$ 
induces a well defined almost complex structure 
$\hat J$ on $W$ such that $\Omega$ is $\hat J$-invariant.
Since  $J$ is integrable (\ie $[T^{1,0},T^{1,0}]\subseteq T^{1,0}$
for the eigenvalue decomposition ${\rm ker}\,\om\otimes\CC= T^{1,0}\oplus T^{0,1}$),
$\hat J$ becomes a complex structure on $W$.
Hence $\Omega$ is a K\"ahler form on the complex manifold $(W ,\hat J)$. 
To simplify notation, from now on,  the same symbol $J$ is used for the complex structure on $W$, 
for which we require that $q$ is a \emph{holomorphic map} on ${\rm ker}\,\om$, \ie 
the induced isomorphism $ \displaystyle q_*: \; {\rm ker}\,\om \, \to \, TW$ 
satisfies $  q_* \circ J  = J  \circ q_* $.

Since it is  commuting with the principal bundle action of $T$, which is arising from  the Reeb flow,  
each holomorphic  isometry $$\tilde h\in \; \Psh(X) = C_{\Psh(X)}(T)$$ 
induces
a diffeomorphism $h:W\to W$, such that the  diagram 
\begin{equation}\label{eq:prinmap}
\begin{CD}
@.     X @>\tilde h>> X \\
@.       @V{q}VV     @V{q}VV \\
@.      W      @> h>>  W \\
\end{CD}
\end{equation} 
is commutative.  (We briefly verify  that $h^*\Omega=\Omega$ and $h_*\circ J=J\circ h_*$ on $W$: 
Indeed, as $\tilde h^*d\,\om =d\,\om$,  
 it follows by \eqref{eq:dexact0} that $\displaystyle {q}^*(h^*\Omega)=
\tilde h^*{q}^*\Omega={q}^*\Omega$.  
This shows $h^*\Omega=\Omega$.  
Since ${q}_*\tilde J=J \, {q}_*$ on ${\rm ker}\, \om$,
 using \eqref{eq:prinmap} it follows $h_* J \, {q}_*(\cY)=  h_{*} {q}_* \tilde J (\cY) =  {q}_{*} \tilde h_{*}  \tilde J (\cY) =  {q}_{*}  \tilde J   \tilde h_{*}  (\cY)  =  J \, h_*{q}_*(\cY)$, 
$\forall \text{vector fields  }  \cY\in{\rm ker}\, \om$, which are  horizontal lifts for a vector field on $W$.  
So $h_*\circ J=J\circ h_*$ on $W$.) 
Thus $h$ is a holomorphic isometry of $W$. 

Further any lift $\tilde h \in \Psh (X) $ of $h$ is unique up to composition with an element of the Reeb flow: Indeed, suppose that $h = \mathrm{id}_W$.
Since $T$ acts transitively on the fibers, after composition with an element of $T$, we may assume that there exists a fixed point $x \in X$ for $\tilde h$. Moreover, since $\tilde h_* \cA = \cA$, the differential of $\tilde h$ at $x$ is the identity of $T_x X$. 
Now every isometry $\tilde h$ of the Riemannian manifold $X$
is determined by its one-jet at one point $x$. 
Hence, ${\rm ker}\, \phi= T$.
\end{proof}

\subsection{Holomorphic and anti-holomorphic isometries} 
For any Sasaki manifold $X$ with Reeb field $\cA$, we briefly recall the interaction of 
$$\Pspm(X)=  \{ h \in \Isom(X) \mid h^{*} \cA = \pm \cA \} $$ 
with the pseudo-Hermitian structure of $X$. For any pseudo-Hermitian  
structure $\{\om, J\}$, the structure  $\{-\om, -J\}$ is called 
the \emph{conjugate structure}. 
%
Then the group of isometries $\Pspm(X)$ 
permutes the pseudo-Hermitian structure of $X$ and its conjugate: 

\begin{lemma}[Sasaki isometries]  \label{lem:hpm}
Let $X$ be any Sasaki manifold and let $h  \in {\Isom}\,(X)$ satisfy $ h_*\cA=\pm \cA$, where $\cA$ is the Reeb field of $X$. Then  $h^*\om=\pm\om$ and  $h_*J =\pm Jh_* \ \mbox{on} \ {\rm ker}\, \om$. 
\end{lemma}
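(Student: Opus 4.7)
The plan is to read off both assertions from the single structural identity \eqref{sasakimet}
together with the defining properties of the Reeb field ($\om(\cA)=1$ and $\iota_\cA d\om=0$).

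First I would convert \eqref{sasakimet} into the pointwise identity
$\om(Y)=g(\cA,Y)$ valid for every tangent vector $Y$.
Indeed, evaluating $g=\om\cdot\om+d\om\circ J$ with the first argument equal to $\cA$
gives $g(\cA,Y)=\om(\cA)\om(Y)+d\om(\cA,JY)=\om(Y)$, since $d\om(\cA,\cdot)=0$.
Using that $h$ is an isometry, I would then compute
\[
(h^{*}\om)(Y)=\om(h_{*}Y)=g(\cA,h_{*}Y)=g(h_{*}^{-1}\cA,Y)
=g(\pm\cA,Y)=\pm\,\om(Y),
\]
which already yields $h^{*}\om=\pm\om$, with the sign matching the one in $h_{*}\cA=\pm\cA$.

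Next I would address the $J$-equivariance. Differentiating the previous equality
gives $h^{*}(d\om)=\pm d\om$, and because $h^{*}\om=\pm\om$ the diffeomorphism
$h$ preserves the contact distribution $\ker\om$; in particular $h_{*}$ restricts to a
linear isomorphism on each fiber of $\ker\om$. For $X,Y\in\ker\om$ the identity
\eqref{sasakimet} collapses to $g(X,Y)=d\om(X,JY)$, so on the one hand
\[
d\om(h_{*}X,Jh_{*}Y)=g(h_{*}X,h_{*}Y)=g(X,Y)=d\om(X,JY),
\]
while on the other hand
\[
d\om(h_{*}X,h_{*}JY)=(h^{*}d\om)(X,JY)=\pm d\om(X,JY).
\]
Comparing these two gives $d\om\bigl(h_{*}X,\,Jh_{*}Y\mp h_{*}JY\bigr)=0$ for every
$X\in\ker\om$. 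Since $h_{*}$ is bijective on $\ker\om$ the first slot sweeps out all
of $\ker\om$, and because the Levi form $d\om|_{\ker\om}$ is nondegenerate
(this is built into the definition of a pseudo-Hermitian structure) I conclude
$h_{*}JY=\pm Jh_{*}Y$ on $\ker\om$, with the same sign as before.

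There is no real obstacle here beyond careful sign bookkeeping; the proof is a direct
two-step bilinear-algebra computation, powered solely by \eqref{sasakimet},
the identity $\om=g(\cA,\cdot)$, and the nondegeneracy of the Levi form.
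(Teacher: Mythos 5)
Your proof is correct and takes essentially the same route as the paper's: the paper likewise deduces $h^*\om=\pm\om$ directly from the isometry property (checking it separately on $\ker\om$ and on $\cA$, where you instead package both steps via the equivalent identity $\om=g(\cA,\cdot)$), and your second step is the same Levi-form comparison, using $h^*d\om=\pm d\om$ and concluding by non-degeneracy. One harmless notational remark: the paper reads $d\om\circ J$ as $d\om(J\,\cdot\,,\cdot)$ while you place $J$ in the second slot, which differs by a sign since $d\om$ is antisymmetric; your computation is internally consistent and runs verbatim under either convention.
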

\begin{proof} 
For any $\cX\in{\ker}\, \om$, the equation $g(h_*\cA,h_*\cX)=g(\cA,\cX)$ shows
\begin{equation*}\begin{split}
0&=g(\cX,\cA)=\om(\cX)\om(\cA)+d\om(J\cX,\cA)=\om(\cX)\\
 &=g(h_*\cX,h_*\cA)=\pm g(h_*\cX,\cA)=\pm\om(h_*\cX) \; .
\end{split}\end{equation*} In particular,  $h_*$ maps ${\ker}\, \om$ onto itself.
As $$ h^*\om(\cA+\cX)=\om(\pm \cA)=\pm \om (\cA+\cX) , $$ we deduce that
 $h^*\om=\pm\om.$ 
Next for any $\cX,\cY\in{\ker}\, \om$,  
\begin{equation*}\begin{split}
d\om(J\cX,\cY)&=g(\cX,\cY)=g(h_*\cX,h_*\cY)=d\om(Jh_*\cX,h_*\cY)\\
 & =d\om(h_* (h^{-1}_*Jh_*)\cX,h_*\cY)=h^*d\om((h^{-1}_*Jh_*)\cX,\cY)\\
 &=\pm d\om((h^{-1}_*Jh_*)\cX,\cY).
\end{split}\end{equation*}By the non-degeneracy of the Levi form $d\om\circ J$ it follows that 
\begin{equation}\label{pmhol}
h_*J =\pm Jh_* \ \mbox{on} \ {\rm ker}\, \om.   \qedhere 
\end{equation}
\end{proof}

\subsection{Join of regular Sasaki manifolds} \label{sect:join} 
We describe in detail a natural procedure which explicitly constructs a new Sasaki manifold from a pair of given regular Sasaki manifolds. 
 {This correponds to a variant of the join construction as is discussed in \cite{bgo} for the compact case. In our context we apply the join in the construction of homogeneous Sasaki manifolds.}

\subsubsection{Sasaki immersions} 
Let $X,Y$ be regular Sasaki manifolds with pseu\-do-Hermitian structures $\{\om, J\}$, $\{ \eta, I\}$,  respectively. 
Also, let  $\cA$, $\cB$ denote the  respective Reeb vector fields on $X$, $Y$.  An immersion of manifolds  $$ \iota: Y \to X$$   such that 
\begin{itemize} \item[i)]  the Reeb vectorfield $\cA$ is tangent to the image $\iota(Y) \subseteq X$ and
\item[ ii)] the tangent bundle of  $\iota(Y)$ satisfies $J  \, T \iota(Y) \subseteq T \iota(Y)$ 
\end{itemize}  
is called a \emph{Sasaki  immersion} if 
\begin{itemize}
\item[iii)] $\{ \eta, I\} = \iota^{*} \{\om, J\}$ 
\end{itemize}  
is satisfied.  That is, for a Sasaki immersion, $\{ \eta, I\}$ is obtained by pullback of  $\{\om, J\}$.
Let $q: X \to W$ and $p: Y \to V$ denote the respective K\"ahler quotients.  Then the Sasaki immersion $\iota$ induces a unique \emph{K\"ahlerian  immersion} $$ j: V \to W \; . $$  such that $j \circ p = q \circ \iota$. Note also that $j$ determines the Sasaki immersion $\iota$ \emph{uniquely} up to composition with an element of the Reeb flow $T$. 

\subsubsection{The join construction and Sasaki immersions} 
Let $$(X_i, \{ \om_i, J_i \}) \,  , \;  i =1,2 \; , $$
be regular Sasaki manifolds with Reeb flows $$ T_i = \{ \phi_{i,t} \}_{t \in \RR} \;  . $$
Furthermore, let   $(W_i,\Omega_i)$ denote the K\"ahler quotients of $X_i$,  and  $$ q_i: X_i \to W_i$$  the corresponding Boothby-Wang fibrations. 
Now consider  $$ \bar T = T_1 \times T_2  =  \{ \left(\phi_{1,s}, \phi_{2,t} \right)  \}_{s,t \in \RR} $$  and 
and define  $\Delta = \{ (\phi_{1,t}, \phi_{2,-t}) \}_{t  \in \RR}$ as the diagonal in $\bar T$. Then put
$$     T = \bar T \big/ \Delta \; . $$  
%
%

\begin{pro}[Join of Sasaki manifolds $X_1$ and $X_2$] \label{pro:join}
There exists a unique regular Sasaki manifold $$ X = X_1 * X_2$$ 
with Reeb flow $T$ and K\"ahler quotient  
$$ q:   \; X_1*X_2 \; \, \to  \; \, W = (W_1 \times W_2, \Omega_1 \times \Omega_2) \; , $$
which admits Sasaki immersions  $\iota_{X_{i}}:  X_{i}  \, \to \,  X_1*X_2 $ such that the diagram 
\begin{equation} \label{eq:join_diagram}
\begin{tikzcd}
X_{i}   \arrow{r}{\iota_{X_{i}}}  \arrow{d}{q_{i}}  &       X_1*X_2 \arrow{d}{q} \\
W_{i}  &    \arrow{l}{\mathsf{pr}_{i} }    W _{1} \times W_{2}  
\end{tikzcd}    
\end{equation} 
is commutative $(i =1,2)$.
\end{pro}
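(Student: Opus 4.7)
The plan is to take $X_1 * X_2 := (X_1 \times X_2)/\Delta$ and descend the pseudo-Hermitian structure from the product. First, since $\bar T$ acts freely and properly on $X_1 \times X_2$ as a product of two such actions, the same is true of the closed one-parameter subgroup $\Delta$. Hence
$$ X \; := \; (X_1 \times X_2)/\Delta $$
is a smooth manifold carrying a free and proper action of the quotient group $T = \bar T/\Delta$, whose orbit space is $(X_1 \times X_2)/\bar T = W_1 \times W_2$. This yields the candidate principal $T$-bundle $q : X \to W_1 \times W_2$.

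Next, I would descend the Sasaki data. Let $\pi_i : X_1 \times X_2 \to X_i$ be the projections and $\pi : X_1 \times X_2 \to X$ the quotient map, and set
$$ \tilde \om \; = \; \pi_1^* \om_1 + \pi_2^* \om_2, \qquad \tilde J \; = \; J_1 \oplus J_2 \ \text{ on } \ \ker \pi_1^*\om_1 \cap \ker \pi_2^*\om_2. $$
The generating vector field of $\Delta$ is $(\cA_1, -\cA_2)$, and the normalisations $\om_i(\cA_i) = 1$, $\iota_{\cA_i} d\om_i = 0$ give $\iota_{(\cA_1,-\cA_2)} \tilde\om = 0$ and $\iota_{(\cA_1,-\cA_2)} d\tilde\om = 0$. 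Hence $\tilde\om$ is basic and descends to a one-form $\om$ on $X$; since the flow of each $\cA_i$ preserves $\om_i$ and hence $J_i$ on $\ker \om_i$, the structure $\tilde J$ is $\Delta$-invariant and descends to a complex structure $J$ on $\ker \om$. The identity $d\tilde\om = (q\circ\pi)^*(\Omega_1 \oplus \Omega_2)$ then gives $d\om = q^*(\Omega_1 \oplus \Omega_2)$ on $X$, and positive-definiteness of the Levi form $d\om \circ J$ reduces to the same property on $X_1$ and $X_2$. The descent of the vector field $\tfrac{1}{2}(\cA_1, \cA_2)$ satisfies $\om(R) = 1$ and $\iota_R d\om = 0$, hence is the Reeb field of $\{\om, J\}$, and its flow is $T$.

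Finally, the Sasaki immersions are defined by choosing a basepoint $x_j^0 \in X_j$ for $j \neq i$ and setting $\iota_{X_i}(x_i) := [(x_i, x_j^0)]$. Commutativity of the stated diagram is immediate, and since $\pi_j \circ \iota_{X_i}$ is constant one has $\iota_{X_i}^*\om = \om_i$ and $\iota_{X_i}^*J = J_i$, so $\iota_{X_i}$ is a Sasaki immersion. Uniqueness of $X_1 * X_2$ up to pseudo-Hermitian isomorphism then follows from Proposition \ref{pr:bw_fib}: a regular Sasaki manifold is determined by its K\"ahler quotient together with the connection form on the principal bundle whose curvature equals the K\"ahler form, and both data are prescribed by the commutative diagram. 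The main technical point is the descent step---checking that $\om$ remains a contact form after quotienting by $\Delta$---and this is precisely why the anti-diagonal $\Delta$ (rather than the diagonal) is chosen: $\Delta$ is the unique one-parameter subgroup of $\bar T$ tangent to $\ker \tilde\om$, so that after quotient $\ker \om$ has codimension one in $TX$ and the Levi form remains non-degenerate.
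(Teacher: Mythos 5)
Your construction coincides with the paper's: the quotient $X=(X_1\times X_2)/\Delta$ by the anti-diagonal, the descent of $\mathrm{pr}_1^*\,\om_1+\mathrm{pr}_2^*\,\om_2$ and of $J_1\oplus J_2$, the common projection of the lifted Reeb fields as the new Reeb field, and base-point slices $x\mapsto [(x,x_j^0)]$ as the Sasaki immersions. Your closing remark that $\Delta$ is the unique one-parameter subgroup of $\bar T$ tangent to $\ker\tilde\om$ is correct and isolates nicely why the anti-diagonal (which the paper somewhat misleadingly calls ``the diagonal'') is the right subgroup. The uniqueness discussion is at the same level of brevity as the paper's own.

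There is one genuine gap: you assert that $J_1\oplus J_2$ ``descends to a complex structure $J$ on $\ker\om$'' and move on. Descending the bundle endomorphism is indeed just $\Delta$-invariance plus fibrewise linear algebra, but in this paper a pseudo-Hermitian structure requires $\{\ker\om, J\}$ to be an \emph{integrable} CR structure, and this is not automatic: what descends a priori is only an almost CR structure, and note that upstairs $\ker\mathrm{pr}_1^*\,\om_1\cap\ker\mathrm{pr}_2^*\,\om_2$ is a codimension-two distribution, so integrability of the two factors cannot simply be quoted. The paper closes exactly this step by observing that $\om$ is a connection form on the principal $T$-bundle $q:X\to W$ with curvature the K\"ahler form $\Omega_1\times\Omega_2$, and then invoking Hatakeyama's integrability theorem \cite[Theorem 2]{hatake}; alternatively one can argue directly that sections of $T^{1,0}X$ admit unique $\Delta$-invariant lifts tangent to the codimension-two distribution and that brackets of such lifts project to brackets, so that $[T^{1,0},T^{1,0}]\subseteq T^{1,0}$ descends from the product. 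Either way, the step needs an argument you did not supply. A small related slip: ``the flow of each $\cA_i$ preserves $\om_i$ and hence $J_i$'' is a non sequitur --- preserving the contact form does not imply preserving $J_i$; the invariance of $J_i$ holds because $T_i\leq\Psh(X_i)$, which is the standardness hypothesis built into the definition of a Sasaki manifold, not a consequence of $\varphi_t^*\om_i=\om_i$.
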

\begin{proof} Observe that, via the product action, $\bar T = T_1 \times T_2$
acts properly  and freely on $X_1 \times X_2 $ 
with quotient map 
$$  \bar q = q_1 \times q_2 :  X_1 \times X_2 \;  \to \; W = W_1 \times W_2   \; 
. $$ Define another quotient  map 
 \begin{equation}  \label{eq:joinp} \mathsf{p}: X_{1}  \times X_{2} \;  \to \;  X := (X_1 \times X_2) / \Delta \; ,  
 \end{equation}
and let  $$ q: X \to W $$ be the induced map such that $\bar q = q \circ \mathsf{p}$.   

Let $\mathrm{pr}_i: X_1 \times X_2 \to X_i$, $i =1,2$,  denote the projection maps. 
Define $$\bar \om = \mathrm{pr}_1^* \, \om_1 + \mathrm{pr}_2^*\, \om_2 \; , $$  
and consider the K\"ahler form  $\Omega = \Omega_{1} \times \Omega_{2}$ on $W$. By construction, 
$$ \bar q^* \, (\Omega)   
= d \bar \om \;   . $$ 


Next let $\bar \cA_{i}$ denote the canonical lifts of the Reeb fields $\cA_{i}$ to $X_{1} \times X_{2}$, where $\bar \cA_{i}$ is tangent to the factor $X_{i}$, respectively. The one-parameter groups generated by these  vector fields are contained in the abelian Lie group $\bar T$. In particular, these vector fields  are $\Delta$-invariant.  Let $\mathcal{V}_{\Delta}$ denote the one-dimensional distribution on $X_{1} \times X_{2}$,  which is spanned by the vector field $\bar \cA_{1} - \bar \cA_{2}$.  Then $\mathcal{V}_{\Delta}$ is vertical (tangent to the fibers) with respect to the 
quotient map $\mathsf{p}$ in \eqref{eq:joinp} induced by the action of $\Delta$. 
Therefore,  both vector fields $\bar \cA_{i}$ 
project to the same vector field  $\cA$  on $X$. 

Note that $\bar \om$ is a $\bar T$-invariant one-form which vanishes on 
$\mathcal{V}_{\Delta}$. 
Therefore,  there exists on $X$ a unique induced one form 
$$\om = \om_1 * \om_2  \;  \text{ satisfying  $\mathsf{p}^{*} \om = \bar \om$}. $$ 
In particular,  $\om$ satisfies $q^* \Omega = d\om $, where $\Omega = \Omega_{1} \times \Omega_{2}$. 
It follows that  $\omega$ is a contact form with 
Reeb field $\cA$.  
The Reeb flow of $\om$ is the one-parameter group  $$ T = \bar T/ \Delta \, . $$ 
{Summarizing the construction, we note that $\om$ is a connection form for the $T$-principal bundle $q: X \to W$ and it has curvature form $\Omega$.} 

Let $J_{i}$ denote the complex structures on  $\ker \omega_{i}$ (canonically extended to tensors on $X_{i}$ by declaring $J_i(\cA_i) = 0$). 
 Observe that  the kernel of $\om$ coincides with the projection of   
 $$\ker \mathrm{pr}_1^* \, \om_1 \cap  \ker \mathrm{pr}_2^*\, \om_2 $$ 
 to (the tangent bundle of) $X$.  Therefore,  $\bar J =  J_1 \times J_{2}$   goes down to an {almost  complex structure $J$ on $\ker \om $ such that 
$$ q: \; (X, \{ \ker \om, J\}) \, \to \, (W, J) $$ is a holomorphic CR-map. 
Since    $(W, \{ \Omega,J\})$ is K\"ahler and $\omega$ a connection form with 
curvature $\Omega$, the almost CR-structure $\{\ker \om, J\}$ is integrable, see
\cite[Theorem 2]{hatake}.  Since $$ d\om  \circ J = (q^{*} \Omega) \circ J$$ is positive,  
$\{\om, J\}$ defines a pseudo-Hermitian structure on $X$.  By the construction $T$ acts by holomorphic transformations on $X$. 
This shows that $(X, \{\om, J\})$ is a regular Sasaki manifold with K\"ahler quotient $(W, \Omega)$.} 

Choose a base point $(x_{o}, y_{o}) \in X_{1} \times X_{2}$ and define 
immersions $\iota_{i}: X_{i} \to  X$, $\iota_{1} (x) =  q (x, y_{o})$ and  
 $\iota_{2}(y) = q(x_{o}, y)$.  (Note that all such pairs of maps are equivalent by an element of $T$.) 
 By the above 
 construction,  $\iota_{i}$ are Sasaki immersions, and,  in fact,  
 they determine the Sasaki structure $ \{\om, J\}$ on the manifold 
 $X_{1} * X_{2}$ uniquely, together with the condition that $\cA$ is the Reeb field. 
 \end{proof} 
 
 The join of Sasaki manifolds enjoys the following functorial property: 
 
\begin{pro}   \label{pro:join_funct}
For any  pair of Sasaki immersions $\tau_{i}:  Y_{i} \to X_{i}$ 
with induced K\"ahler immersions $j_{i}: V_{i} \to W_{i}$, $i = 1,2$, there exists a  unique 
Sasaki immersion $$ \tau = \tau_{1} * \tau_{2}:  \; Y_{1} * Y_{2}  \; \to \;  X_{1} * X_{2}$$  such that the associated diagram 
\begin{equation} \label{eq:join_functorial}
\begin{tikzcd}
Y_{1} * Y_{2}   \arrow{r}{\tau}  \arrow{d}{p}  &       X_1*X_2 \arrow{d}{q} \\
V_{1}  \times V_{2}   \arrow{r}{ j_{1} \times j_{2} }    &  \; \; \;  W \; \;  
\end{tikzcd}    
\end{equation} 
is commutative and  $\iota_{X_{i}}  \circ \tau_{i}$ and $ \tau  \circ \iota_{Y_{i}}$ coincide up to an element of 
$T$.  
\end{pro}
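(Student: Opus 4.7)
The plan is to construct $\tau = \tau_{1} * \tau_{2}$ by descending the product map $\tau_{1} \times \tau_{2} : Y_{1} \times Y_{2} \to X_{1} \times X_{2}$ to the join quotients, and then verify the required properties using the explicit description of the join construction given in Proposition \ref{pro:join}. First I would observe that since each $\tau_{i} : Y_{i} \to X_{i}$ is a Sasaki immersion, by condition (i) of the definition the Reeb vector field of $X_{i}$ is tangent to $\tau_{i}(Y_{i})$, and condition (iii) identifies this restricted Reeb field with the Reeb field $\cB_{i}$ of $Y_{i}$. Therefore $\tau_{i}$ intertwines the Reeb flows $T_{i}^{Y}$ on $Y_{i}$ and $T_{i}$ on $X_{i}$, so the product $\tau_{1} \times \tau_{2}$ is equivariant with respect to the diagonal subgroups $\Delta^{Y} \leq T_{1}^{Y} \times T_{2}^{Y}$ and $\Delta^{X} \leq T_{1} \times T_{2}$. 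Consequently it descends to a smooth map $\tau : Y_{1} * Y_{2} \to X_{1} * X_{2}$, which is an immersion because $\tau_{1} \times \tau_{2}$ is and the two quotient maps are submersions of equal fiber dimension.

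Next I would verify that $\tau$ is a Sasaki immersion. From the construction in the proof of Proposition \ref{pro:join}, the contact form on $X_{1} * X_{2}$ is the descent of $\bar{\om} = \mathsf{pr}_{1}^{*} \om_{1} + \mathsf{pr}_{2}^{*} \om_{2}$, and similarly $\bar{\eta} = \mathsf{pr}_{1}^{*} \eta_{1} + \mathsf{pr}_{2}^{*} \eta_{2}$ for $Y_{1} * Y_{2}$. The pullback formula
\[
(\tau_{1} \times \tau_{2})^{*} \bar{\om} = \mathsf{pr}_{1}^{*} (\tau_{1}^{*} \om_{1}) + \mathsf{pr}_{2}^{*} (\tau_{2}^{*} \om_{2}) = \bar{\eta}
\]
together with equivariance yields $\tau^{*} \om = \eta$ after descent. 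The analogous statement for the complex structure on the contact distribution follows from the same bookkeeping applied to $\bar{J} = J_{1} \times J_{2}$ and $\bar{I} = I_{1} \times I_{2}$ on the common kernel, using that each $\tau_{i}$ intertwines $I_{i}$ with $J_{i}$ on $\ker \eta_{i}$.

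For the commutativity of \eqref{eq:join_functorial}, I would reduce again to the product level: since $q_{i} \circ \tau_{i} = j_{i} \circ p_{i}$ by hypothesis, one has $\bar{q} \circ (\tau_{1} \times \tau_{2}) = (j_{1} \times j_{2}) \circ \bar{p}$, and this identity descends to $q \circ \tau = (j_{1} \times j_{2}) \circ p$. For the compatibility assertion, note that both $\iota_{X_{i}} \circ \tau_{i}$ and $\tau \circ \iota_{Y_{i}}$ are Sasaki immersions of $Y_{i}$ into $X_{1} * X_{2}$; by the diagram just established they induce the same K\"ahlerian immersion $j_{i} : V_{i} \to W_{1} \times W_{2}$ (composed with the factor inclusion), and as remarked in Section \ref{sect:join} a Sasaki immersion is uniquely determined up to an element of the Reeb flow $T$ by its underlying K\"ahlerian immersion.

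For uniqueness, if $\tau'$ is another Sasaki immersion making \eqref{eq:join_functorial} commute, then $q \circ \tau = q \circ \tau'$ forces $\tau$ and $\tau'$ to differ fiberwise by an element of $T$; since both preserve the Reeb vector field, this difference is a single global element of $T$, which is precisely the ambiguity allowed by the statement. The main obstacle in the argument is Step 1: one must carefully identify the Reeb flows on $Y_{i}$ with the restrictions of those on $X_{i}$ in order to guarantee that the diagonal $\Delta^{Y}$ is mapped into $\Delta^{X}$, so that descent is possible. Once this equivariance is in place, everything else is functorial bookkeeping with the explicit pullback formulas.
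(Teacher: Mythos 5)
Your proposal is correct and takes essentially the same route as the paper, whose entire proof consists of the single observation that, because the $\tau_{i}$ are Sasaki immersions, the product map $\bar\tau=\tau_{1}\times\tau_{2}$ descends from $Y_{1}\times Y_{2}$ to a map $Y_{1}*Y_{2}\to X_{1}*X_{2}$ with the required properties. Your write-up merely supplies the details the paper leaves implicit (Reeb-flow equivariance of $\bar\tau$ so that $\Delta^{Y}$-orbits map to $\Delta^{X}$-orbits, the pullback computation for the descended contact forms, and the identification of the residual ambiguity with a single global element of $T$), all consistent with the construction in Proposition \ref{pro:join}.
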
   
\begin{proof}
 Since $\tau_{i}$ are Sasaki immersions, the product map  $$ \bar \tau = \tau_{1} \times \tau_{2}: \;  Y_{1} \times Y_{2} \to X_{1} \times X_{2}$$  induces a map 
$$ \tau_{1} * \tau_{2} :=  Y_{1}*Y_{2} \to  X_{1} * X_{2}  $$ 
with the required properties. 
\end{proof}


This gives:

\begin{corollary} The join of $X_{1}$ and $X_{2}$ defines a natural homomorphism 
$$   \Psh(X_{1}) \times \Psh(X_{2}) \,  \to  \, \Psh(X_{1}  * X_{2})  \,\,  , \;  (\phi_{1}, \phi_{2}) \mapsto \phi_{1} * \phi_{2  }$$
with kernel the diagonal group $\Delta =  \{ (\phi_{1,t}, \phi_{2,-t} ) \}_{t \in \RR}$. 
\end{corollary}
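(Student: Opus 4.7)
The plan is to build the map directly from Proposition \ref{pro:join_funct} applied to $\tau_i = \phi_i \in \Psh(X_i)$, viewed as pseudo-Hermitian \emph{self}-equivalences of $X_i$ (in particular, Sasaki immersions in the sense of Section \ref{sect:join}). First I note that any $\phi_i \in \Psh(X_i)$ commutes with the Reeb flow $T_i$, by \eqref{CPsh}. Hence the product diffeomorphism $\phi_1 \times \phi_2$ of $X_1 \times X_2$ is equivariant with respect to the full abelian group $\bar T = T_1 \times T_2$, and in particular with respect to the diagonal $\Delta$. Therefore $\phi_1 \times \phi_2$ descends through the quotient map $\mathsf{p}: X_1 \times X_2 \to X_1 * X_2$ to a well-defined diffeomorphism $\phi_1 * \phi_2$ of $X_1 * X_2$, uniquely determined by $\mathsf{p} \circ (\phi_1 \times \phi_2) = (\phi_1 * \phi_2) \circ \mathsf{p}$.

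Next I would verify that $\phi_1 * \phi_2 \in \Psh(X_1 * X_2)$. The one-form $\bar \om = \mathsf{pr}_1^* \om_1 + \mathsf{pr}_2^* \om_2$ and the almost complex structure $\bar J = J_1 \times J_2$ on $X_1 \times X_2$ are preserved by $\phi_1 \times \phi_2$ because each $\phi_i$ preserves $\om_i$ and $J_i$ on $\ker \om_i$. Since $\om$ and $J$ on $X_1 * X_2$ are defined as the unique descents of $\bar \om$ and $\bar J$ along $\mathsf{p}$, and since the descended map intertwines $\mathsf{p}$ with itself, invariance of $\{\bar \om, \bar J\}$ forces invariance of $\{\om, J\}$. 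Functoriality of the descent also shows $(\phi_1 * \phi_2) \circ (\phi_1' * \phi_2') = (\phi_1 \phi_1') * (\phi_2 \phi_2')$, so $(\phi_1, \phi_2) \mapsto \phi_1 * \phi_2$ is a group homomorphism.

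The main point is the computation of the kernel. Suppose $\phi_1 * \phi_2 = \mathrm{id}_{X_1 * X_2}$. Then $\phi_1 \times \phi_2$ preserves every $\Delta$-orbit, so there is a continuous function $f: X_1 \times X_2 \to \RR$ (using that $\Delta \cong \RR$ acts freely) with
\[
\phi_1(x_1) = \phi_{1, f(x_1, x_2)}(x_1) \quad \text{and} \quad \phi_2(x_2) = \phi_{2, -f(x_1, x_2)}(x_2)
\]
for all $(x_1, x_2)$. Because $T_1$ acts freely and properly on $X_1$ and the left-hand side $\phi_1(x_1)$ does not depend on $x_2$, the value $f(x_1, x_2)$ is independent of $x_2$; symmetrically it is independent of $x_1$. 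Thus $f \equiv t$ is a single real number, and $(\phi_1, \phi_2) = (\phi_{1,t}, \phi_{2,-t}) \in \Delta$. Conversely, every element of $\Delta$ manifestly descends to the identity on $X_1 * X_2 = (X_1 \times X_2)/\Delta$. This identifies the kernel with $\Delta$ as claimed.

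The only slightly delicate step is the freeness argument isolating $f$ as a constant; everything else is a direct application of the functorial construction in Proposition \ref{pro:join_funct} and the centralizer description \eqref{CPsh} of $\Psh$.
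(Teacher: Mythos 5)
Your proposal is correct and follows essentially the same route as the paper, which simply invokes the functorial descent construction of Proposition \ref{pro:join_funct} and asserts that the resulting map is a homomorphism with kernel $\Delta$. Your explicit kernel computation (using freeness of the Reeb flows $T_1$, $T_2$ to show the orbit-matching parameter $f$ is independent of each variable, hence constant) correctly fills in the one detail the paper leaves implicit, so nothing is missing.
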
 
\begin{proof} Indeed, by the construction in Proposition \ref{pro:join_funct}, $\phi_{1} * \phi_{2} \in \Psh(X)$ and the above map is a homomorphism with kernel $\Delta$.
\end{proof}

\noindent We call  the group 
$$\Psh(X_{1}) * \Psh(X_{2}) \,  = \;   \left(  \Psh(X_{1}) \times \Psh(X_{2}) \right)  \big/ \, \Delta $$
the \emph{join}  of the groups $\Psh(X_{i})$. By the above, the 
join of $\Psh(X_i)$ identifies with a subgroup of  $\Psh(X_{1}  * X_{2})$.
\begin{corollary} \label{cor:join}
Let $X_{1}$ and $X_{2}$ be  homogeneous Sasaki manifolds. 
Then the join of groups $ \Psh(X_{1}) * \Psh(X_{2})$ is acting transitively 
 by pseudo-Hermitian transformations on the Sasaki manifold $X_{1}  * X_{2}$. 
 In particular,  $X_{1}  * X_{2}$ is a  homogeneous Sasaki manifold.
\end{corollary}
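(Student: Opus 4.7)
The plan is to embed $\Psh(X_{1}) * \Psh(X_{2})$ into $\Psh(X_{1} * X_{2})$ via the preceding corollary, and then exploit the transitivity of the obvious product action on $X_{1} \times X_{2}$ to produce transitivity on the join.

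First I would note that since each $X_{i}$ is a homogeneous Sasaki manifold, the coordinate-wise product action of $\Psh(X_{1}) \times \Psh(X_{2})$ on $X_{1} \times X_{2}$ is transitive. This is immediate from the definition of a homogeneous Sasaki manifold.

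Next, I would verify that this product action descends to a transitive action on the join $X_{1} * X_{2} = (X_{1} \times X_{2})/\Delta$ constructed in Proposition \ref{pro:join}. The key observation is that $\Delta \leq T_{1} \times T_{2}$ sits inside the center of $\Psh(X_{1}) \times \Psh(X_{2})$: indeed, by \eqref{CPsh} each Reeb flow $T_{i}$ lies in the center of $\Psh(X_{i})$. Consequently the product action commutes with the $\Delta$-action, and so it descends along the surjective quotient map $\mathsf{p}\colon X_{1} \times X_{2} \to X_{1} * X_{2}$. Since $\mathsf{p}$ is equivariant and surjective, transitivity of the upstairs action forces transitivity of the descended action.

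Finally, by the preceding corollary the homomorphism $(\phi_{1},\phi_{2}) \mapsto \phi_{1} * \phi_{2}$ identifies the quotient $(\Psh(X_{1}) \times \Psh(X_{2}))/\Delta$ with the subgroup $\Psh(X_{1}) * \Psh(X_{2}) \leq \Psh(X_{1} * X_{2})$, and by the very construction in Proposition \ref{pro:join_funct} this identification is compatible with the descent of the product action. Hence $\Psh(X_{1}) * \Psh(X_{2})$ acts transitively on $X_{1} * X_{2}$ by pseudo-Hermitian transformations, so a fortiori $\Psh(X_{1} * X_{2})$ acts transitively, and $X_{1} * X_{2}$ is a homogeneous Sasaki manifold. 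The only point in the argument that requires any verification is the centrality of $\Delta$ in the product group, which is an immediate consequence of \eqref{CPsh}; beyond this, there is no substantive obstacle.
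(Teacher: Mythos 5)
Your proof is correct, but it takes a genuinely different route from the paper's. You argue entirely upstairs: the product action of $\Psh(X_{1})\times\Psh(X_{2})$ on $X_{1}\times X_{2}$ is transitive, it commutes with the one-parameter group $\Delta$ because the Reeb flows $T_{i}$ are central by \eqref{CPsh}, and hence it descends along the equivariant surjection $\mathsf{p}\colon X_{1}\times X_{2}\to X_{1}*X_{2}$, where transitivity is automatic; and you correctly note that identifying the descended action with the subgroup $\Psh(X_{1})*\Psh(X_{2})\leq\Psh(X_{1}*X_{2})$ is exactly the content of Proposition \ref{pro:join_funct} and the corollary preceding the statement, since $\phi_{1}*\phi_{2}$ is by construction the map induced by $\phi_{1}\times\phi_{2}$ on the quotient. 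The paper instead argues downstairs, through the Boothby--Wang fibration of the join: the K\"ahler quotient $W_{1}\times W_{2}$ is homogeneous for $G=G_{1}\times G_{2}$, where $G_{i}$ is the Boothby--Wang image of $\Psh(X_{i})$; since $G$ is also the Boothby--Wang image of $\Psh(X_{1})*\Psh(X_{2})$, and the join group in addition contains the Reeb flow $T$, which is transitive on the fibers of $q\colon X_{1}*X_{2}\to W_{1}\times W_{2}$, transitivity on the total space follows --- the same fibration mechanism that underlies Proposition \ref{pr:noncompact}. Your descent argument is more elementary and self-contained: once the homomorphism of the preceding corollary is in place, transitivity is essentially tautological, with the only point to check being centrality of $\Delta$. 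The paper's version, on the other hand, fits the pattern used throughout the text (transitivity of a subgroup of $\Psh(X)$ detected on the K\"ahler quotient together with the Reeb flow) and records along the way that $G_{1}\times G_{2}$ is the image group acting on the base, information that is reused in later structure results such as Proposition \ref{pro:pshX_dec}. Both proofs rest on the same two ingredients --- centrality of the Reeb flows and functoriality of the join --- and neither has a gap.
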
 
\begin{proof} The K\"ahler quotient $W_{1} \times W_{2}$ of $X_{1}*X_{2}$  is a homogeneous K\"ahler manifold for the group $G = G_{1} \times G_{2}$, where $G_{i}$ denotes the Boothby-Wang image of  $ \Psh(X_{i})$  in $\Isom_{\! h}(W_{i})$. Since $G$ is also the Boothby-Wang image of 
$ \Psh(X_{1}) * \Psh(X_{2})$, and the latter also contains the Reeb-flow $T$, it follows that
$\Psh(X_{1}) * \Psh(X_{2})$ acts transitively on $X_{1}*X_{2}$. 
\end{proof}

\section{Pseudo-Hermitian group $\Psh(X)$ of a regular Sasaki manifold with vanishing K\"ahler class}\label{sec:isocontra}

Suppose that $(X,\{\om,J\})$ is a regular Sasaki manifold with Reeb flow $T$ isomorphic to the real line $\RR$. 
Then the Boothby-Wang fibration Proposition \ref{pr:bw_fib} gives  a principal bundle
\begin{equation*}\begin{CD}
\RR@>>> X@>q>> W
\end{CD}\end{equation*}
over the  K\"ahler quotient $W = (W,\Omega,J)$. Here the 
group $\RR =  \{\varphi_t\}_{t\in\RR}$ of the principal bundle is generated by the Reeb field and  the K\"ahler form on the base 
is satisfying the equation  
\begin{equation}\label{eq:dexact}\begin{CD}
q^*\Omega=d\om \; . 
\end{CD}
\end{equation}

\smallskip 
Choose a smooth section $s:W \to  X$ of $q$
such that the bundle $X$ is equivalent to the trivial bundle
by a bundle map $$ \displaystyle f:\RR\times W \lra X$$
which is defined by $$ f(t,w)=\varphi_t \, s(w) \; . $$
We thus have the following
commutative diagram:
\begin{equation}\label{eq:commuta}
\begin{CD}
 @. \RR\times W@>f>> \;  \;  \; X  \\
@.     {\rm pr}\searrow @.  
\! \! \swarrow {q} @ .   \\
@.   &  W \  @. \\
\end{CD} \; \;  \; \;  \; , \;   \text{ $q \circ s = \mathrm{id}_{W}$ }  . 
\end{equation}
Declare a one-form $\theta$ on $W$ by putting 
\begin{equation}\begin{CD} \label{theta}
\theta=s^*\om \; .
\end{CD}
\end{equation} Note then that  $d\theta=\Omega$ from 
\eqref{eq:dexact}. 
In particular,  \emph{the K\"ahler form $\Omega$ on  $W$ is exact}. 

\smallskip 
Next  extend $\theta$ to a translation invariant one-form on $\RR\times W$ by declaring 
\begin{equation}\label{eq:canoform} \begin{CD}
\om_0=dt+{\rm pr}^*\theta, \;  \mbox{ so that}\ d\om_0={\rm pr}^*\Omega \text{ holds}.
\end{CD}
\end{equation}
Noting $\displaystyle f(0,w)=s(w)={s}\circ{\rm pr}(0,w)$,
we have 
$${\rm pr}^*\theta_{|_{\{0\}\times W}}=
\left((s\circ{\rm pr})^*\om\right)_{|_{\{0\}\times W}}=f_{|_{\{0\}\times W}}^*\om.$$
Since both forms $f^*\om$ and $\omega_{0}$ are translation invariant, we  conclude that 
\begin{equation}\label{eq:pullform}
\begin{CD}
f^*\om=\om_0  \, . 
\end{CD}
\end{equation}
Then an almost complex structure $\tilde J$ on ${\rm ker}\, \om_0$ is defined by
\begin{equation}\label{alcomp}
{\rm pr}_*\circ \tilde J=J\circ{\rm pr}_*.
\end{equation}
By construction, the isomorphism $\displaystyle f_* : {\rm ker}\, \om_0\to {\rm ker}\, \om$
is holomorphic, \ie
$$ \displaystyle f_*\circ \tilde J=J\circ f_* .$$
In particular, $\tilde J$ is a complex structure on ${\rm ker}\, \om_0$.
Summarizing the above we obtain: 

\begin{pro}\label{pr:oductiden}
Identifying the regular Sasaki manifold $X$ with $\RR\times W$ via $f$,
the pseudo-Hermitian structure $\{\om,J,\cA\}$ corresponds to 
$\{\om_0, \tilde J,{\partial \over {\partial t}}\}$ on  the trivial bundle  $\RR\times W$, where $\om_0$ is defined as in \eqref{eq:canoform}.
\end{pro}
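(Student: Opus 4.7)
The plan is to assemble, along the identification $f : \RR \times W \to X$, three separate correspondences: of the contact forms $f^{\ast}\om = \om_{0}$, of the Reeb fields $f_{\ast}(\partial/\partial t) = \cA$, and of the complex structures on the respective contact distributions $f_{\ast}\circ \tilde J = J\circ f_{\ast}$. The first of these is precisely \eqref{eq:pullform}, which has already been recorded: both forms are translation-invariant along the $\RR$-factor (since $\om$ is Reeb-invariant), and they agree on the slice $\{0\}\times W$, where $f = s\circ \mathrm{pr}$ forces $f^{\ast}\om = \mathrm{pr}^{\ast}\theta$, while $dt$ annihilates the tangent space to the slice.

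Second, I would spell out the Reeb correspondence. Because $f(t,w) = \varphi_{t}\, s(w)$ and $\{\varphi_{t}\}$ is the flow of $\cA$, the curves $t \mapsto f(t,w)$ are Reeb orbits, so $f_{\ast}(\partial/\partial t)$ is exactly $\cA$ at each point. Combined with $f^{\ast}\om = \om_{0}$, this also shows $\partial/\partial t$ is Reeb for $\om_{0}$; equivalently one may verify $\om_{0}(\partial/\partial t)=1$ and $\iota_{\partial/\partial t}\, d\om_{0} = \iota_{\partial/\partial t}\, \mathrm{pr}^{\ast}\Omega = 0$ directly from \eqref{eq:canoform}.

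Third, I would check the complex-structure correspondence. For $\tilde J$ to be well-defined by \eqref{alcomp}, one needs $\mathrm{pr}_{\ast}$ to restrict to an isomorphism $\ker\om_{0} \to TW$: indeed a tangent vector $a\,\partial/\partial t + v$ of $\RR\times W$ lies in $\ker\om_{0}$ precisely when $a = -\theta(\mathrm{pr}_{\ast}v)$, so the inverse map is $v \mapsto -\theta(v)\,\partial/\partial t + v$. From the commutative diagram \eqref{eq:commuta} we have $q\circ f = \mathrm{pr}$, whence $q_{\ast}\circ f_{\ast} = \mathrm{pr}_{\ast}$, and both $q_{\ast}$ on $\ker\om$ and $\mathrm{pr}_{\ast}$ on $\ker\om_{0}$ are isomorphisms intertwining the respective complex structures with $J$ on $W$. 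Comparing via these isomorphisms forces $f_{\ast}\circ\tilde J = J\circ f_{\ast}$ on $\ker\om_{0}$, and the integrability of $\tilde J$ then transfers from that of $J$ via $f_{\ast}$, giving the third assertion.

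The work is largely bookkeeping; the only slight subtlety is confirming that the horizontal distribution $\ker\om_{0}$ for the connection form $\om_{0}$ on the trivial bundle is mapped isomorphically onto $TW$ by $\mathrm{pr}_{\ast}$, so that $\tilde J$ really is defined on the full contact distribution. No harder obstacle arises, since the whole construction is set up precisely so that $f$ is an isomorphism of pseudo-Hermitian structures.
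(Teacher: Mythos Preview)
Your proposal is correct and follows essentially the same approach as the paper: the paper presents the argument in the paragraphs preceding the proposition (which is stated as a summary), establishing $f^{\ast}\om=\om_{0}$ via translation-invariance and agreement on $\{0\}\times W$, defining $\tilde J$ by \eqref{alcomp}, and noting that $f_{\ast}\circ\tilde J=J\circ f_{\ast}$ holds by construction. Your version adds explicit verification of the Reeb-field correspondence and of the isomorphism $\mathrm{pr}_{\ast}:\ker\om_{0}\to TW$, and spells out the holomorphicity of $f_{\ast}$ via $q\circ f=\mathrm{pr}$; these are useful details the paper leaves implicit, but the underlying argument is the same.
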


\paragraph{\em Existence of  a compatible regular Sasaki manifold} 
Conversely, any \emph{exact}  K\"ahler form $$ \Omega = d\theta$$  
on a complex manifold $W$ arises as the curvature form of 
a \emph{connection form}  $\omega$ on the  trivial principal 
bundle  $$ X = \RR \times W \; . $$ 
In fact, such $\om$ with Reeb field $\cA = {\partial \over {\partial t}}$ is given by  \eqref{eq:canoform}. 
As a consequence (employing \cite[Theorem 2]{hatake} to show the integrability of the almost 
CR-structure $\{\ker \om, J \}$), 
there exists on $X$ a pseudo-Hermitian structure 
\begin{equation}  \label{eq:odtheta} 
  \{\om,J,\cA\}  \; ,
\end{equation}
which has the K\"ahler manifold $(W, \Omega)$ as its K\"ahler quotient. 
We call such a pseudo-Hermitian structure \emph{compatible with the K\"ahler manifold $(W, \Omega)$}.

\smallskip  
We remark now that,  under a mild assumption 
 on the K\"ahler manifold $W$, any compatible pseudo-Hermi\-tian structure on $X$ is essentially determined uniquely 
by  the K\"ahler structure on $W$.  
 
 \begin{pro}\label{pr:gauge}
Suppose $H^{1}(W,\RR) = \{ 0 \}$. Then any two pseudo-\-Hermi\-tian structures 
$\{\om,J,\cA\}$  and  $\{\om',J',\cA\}$  on $X$,  which are 
compatible with the K\"ahler manifold $(W, \Omega)$, are related by a
gauge transformation for the principal bundle  $q: X \to W$.
\end{pro}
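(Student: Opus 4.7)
The idea is that once the K\"ahler form $\Omega$ is fixed, a compatible pseudo-Hermitian structure amounts to the choice of a connection one-form on the principal $\RR$-bundle $q: X \to W$ with prescribed curvature $\Omega$. Such connection forms differ by a basic closed one-form on $W$, and the vanishing of $H^1(W,\RR)$ forces this difference to be exact, which produces the gauge transformation. The complex structure piece will take care of itself.

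\textbf{Step 1 (Comparing the connection forms).} Both $\om$ and $\om'$ satisfy $\om(\cA)=\om'(\cA)=1$, $d\om=d\om'=q^*\Omega$, and are invariant under the Reeb flow $T$. Therefore $\beta:=\om-\om'$ is closed, horizontal (i.e.\ $\iota_{\cA}\beta=0$), and $T$-invariant (by Cartan's formula). So $\beta$ is a basic form and descends to a closed one-form $\alpha$ on $W$ with $q^*\alpha=\beta$. By the hypothesis $H^1(W,\RR)=\{0\}$, we may write $\alpha=df$ for some $f\in C^\infty(W)$.

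\textbf{Step 2 (Constructing the gauge transformation).} Using the trivialization $X\cong \RR\times W$ of Proposition~\ref{pr:oductiden}, define
\[
  \Phi:X\lra X,\qquad \Phi(t,w)=(t-f(w),w).
\]
This is a gauge transformation of the principal $\RR$-bundle $q:X\to W$; in particular $q\circ\Phi=q$ and $\Phi_{*}\cA=\cA$. Writing $\om=dt+q^{*}\theta$ and $\om'=dt+q^{*}\theta'$ (so that $\theta-\theta'=\alpha=df$), a direct computation gives $\Phi^{*}\om=dt-q^{*}df+q^{*}\theta=dt+q^{*}\theta'=\om'$.

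\textbf{Step 3 (Transporting the complex structure).} Since $\Phi^{*}\om=\om'$, the differential $\Phi_{*}$ sends $\ker\om'$ isomorphically onto $\ker\om$, and since $q\circ\Phi=q$ we have $q_{*}\circ\Phi_{*}=q_{*}$. By compatibility, both $J$ and $J'$ are the unique almost complex structures on $\ker\om$ and $\ker\om'$, respectively, for which $q_{*}$ is holomorphic onto $(W,J_{W})$. For any $v\in\ker\om'$,
\[
  q_{*}\bigl((\Phi^{*}J)v\bigr)=q_{*}\bigl(J(\Phi_{*}v)\bigr)=J_{W}\,q_{*}(\Phi_{*}v)=J_{W}\,q_{*}v,
\]
so $\Phi^{*}J$ is an almost complex structure on $\ker\om'$ making $q_{*}$ holomorphic; by uniqueness $\Phi^{*}J=J'$. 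Hence $\Phi^{*}\{\om,J\}=\{\om',J'\}$, finishing the proof.

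\textbf{Expected obstacle.} The only nontrivial geometric input is that $\beta=\om-\om'$ descends to $W$; once this is recognized, the cohomological vanishing supplies the potential $f$ and the rest is formal. The subtlety worth double-checking is that the gauge transformation constructed from $\om$ alone automatically intertwines the complex structures; this is really a consequence of the rigid way $J,J'$ are determined by $J_{W}$ through the isomorphisms $q_{*}|_{\ker\om}$ and $q_{*}|_{\ker\om'}$, so no separate integration problem arises for the complex structure part.
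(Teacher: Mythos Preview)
Your proof is correct and follows essentially the same approach as the paper: identify $\om-\om'$ as the pullback of a closed one-form on $W$, integrate it using $H^1(W,\RR)=0$, and use the resulting function to build the gauge transformation. The only difference is that the paper stops after verifying $G^*\om=\om'$, leaving the complex structure implicit, whereas your Step~3 spells out why the gauge transformation automatically intertwines $J$ and $J'$ via the uniqueness of the lift of $J_W$ through $q_*$; this is a useful clarification rather than a genuinely different argument.
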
 
\begin{proof} By the compatibility assumption, we have  
$\om' - \om = q^{*} \eta$, for some closed one-form $\eta \in \Omega^{1}(W)$.
Since $H^{1}(W,\RR) = \{ 0 \}$, there exists a function $\lambda: W \to \RR$ such  $\eta = d \lambda$.
In the view of  Proposition \ref{pr:oductiden},  we may assume that $X = \RR \times W$ 
and $\omega = dt + q^{*} \theta$, where $d \theta = \Omega$. We define  a gauge transformation $G$ for the bundle $q$, by putting
$$G(t,w) = (t + \lambda(w), w) \; . $$ 
We then calculate $
G^* \om  = G^{*}dt + q^{*} \theta  = dt +  d\,  q^*\lam +  q^{*} \theta  = \omega +  q^{*} \eta = \om' $.
\end{proof}

\begin{remark} \label{kaehler_to_sasaki} 
\em For an analogue existence result for Sasaki manifolds in the 
more elaborate case of circle bundles over 
Hodge manifolds,  see \cite[Theorem 3]{bw}, respectively \cite{kob}. 
\end{remark}

\subsection{Lifting of isometries  from the K\"ahler quotient}
We now prove a structure result for the group of holomorphic isometries 
$\Psh (X)$ of $X$ if the Boothby-Wang fibration 
has contractible fiber $\RR$. That is,  let $X$ be a regular Sasaki manifold 
with Boothby-Wang fibration 
\begin{equation}   \label{eq:kahlerq}
 \displaystyle \RR\to X  
\to W \;  . 
\end{equation} 
As before let 
$$ \displaystyle{\Isom}_{\! h}(W)={\Isom}(W,\Omega,J)$$  
denote the group of holomorphic isometries of the K\"ahler 
quotient $W$ for $X$.   

\begin{pro}\label{pr:noncompact}
Assume  that  the  first cohomology of\/ the K\"ahler quotient $W$, arising in \eqref{eq:kahlerq}, 
satisfies $H^{1}(W) = \{0\}$. 
Then the Boothby-Wang homomorphism \eqref{eq:homphi} defines a natural  exact sequence 
\begin{equation}\begin{CD}\label{eq:const}
1@>>> \RR@>>>\Psh (X)@>\phi>>\Isom_{\! h}(W)@>>>1 \;  .
\end{CD}\end{equation}
In particular, $\Psh (X)$ acts transitively on $X$ if and only if\/  
$\Isom_{\! h}(W)$ acts transitively on $W$.
\end{pro}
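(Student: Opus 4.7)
The plan is to show surjectivity of the Boothby-Wang homomorphism $\phi: \Psh(X) \to \Isom_{\! h}(W)$; exactness at $\Psh(X)$ and at $\RR$ is already given by Proposition \ref{pr:bw_fib} (the kernel of $\phi$ is $T \cong \RR$ acting freely on $X$). The main content is therefore to lift an arbitrary $h \in \Isom_{\! h}(W)$ to an element $\tilde h \in \Psh(X)$.

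First I would invoke Proposition \ref{pr:oductiden}, to identify $X$ with $\RR \times W$ carrying the pseudo-Hermitian structure $\{\om_0, \tilde J, \partial/\partial t\}$, where $\om_0 = dt + \mathrm{pr}^*\theta$ and $\theta$ is a one-form on $W$ with $d\theta = \Omega$. (Note that such $\theta$ exists globally because, by \eqref{eq:dexact}, $\Omega$ is exact on $W$.) Given $h \in \Isom_{\! h}(W)$, I propose the ansatz
\begin{equation*}
\tilde h(t,w) = (t + \lambda(w), \, h(w))
\end{equation*}
for a smooth function $\lambda: W \to \RR$ to be determined. A direct computation of $\tilde h^*\om_0$ shows that the contact condition $\tilde h^*\om_0 = \om_0$ is equivalent to the equation
\begin{equation*}
d\lambda = \theta - h^*\theta \quad \text{on } W.
\end{equation*}

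Next I would observe that the right-hand side is closed: since $h$ is a K\"ahler isometry, $h^*\Omega = \Omega$, hence $d(\theta - h^*\theta) = \Omega - h^*\Omega = 0$. By the assumption $H^1(W) = \{0\}$, this closed form is exact, so a function $\lambda$ with $d\lambda = \theta - h^*\theta$ exists (this is essentially the gauge-transformation argument from Proposition \ref{pr:gauge}). With this choice of $\lambda$, the map $\tilde h$ is a bundle automorphism commuting with the Reeb flow and satisfying $\tilde h^*\om = \om$, so $\tilde h_*$ preserves $\ker \om$.

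It remains to verify that $\tilde h_* \circ \tilde J = \tilde J \circ \tilde h_*$ on $\ker \om$. This is automatic: for any $v \in \ker \om_0$, the intertwining relations $\mathrm{pr}_* \circ \tilde J = J \circ \mathrm{pr}_*$ from \eqref{alcomp}, the holomorphicity of $h$, and the equality $\mathrm{pr} \circ \tilde h = h \circ \mathrm{pr}$ together give $\mathrm{pr}_*(\tilde h_* \tilde J v) = \mathrm{pr}_*(\tilde J \tilde h_* v)$; injectivity of $\mathrm{pr}_*$ on $\ker \om_0$ yields the claim. Thus $\tilde h \in \Psh(X)$ with $\phi(\tilde h) = h$, proving surjectivity. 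The final statement (transitivity of $\Psh(X)$ on $X$ iff transitivity of $\Isom_{\! h}(W)$ on $W$) is then immediate since the kernel $T$ acts transitively on each fiber of $q$. I expect no serious obstacle; the only subtlety is ensuring the topological hypothesis $H^1(W) = \{0\}$ is used exactly where needed, namely to integrate $\theta - h^*\theta$.
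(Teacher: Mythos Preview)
Your proof is correct and follows essentially the same approach as the paper: the paper reduces to Lemma \ref{lem:iso_lift}, which lifts $h$ by composing the naive lift $(t,w)\mapsto(t,h(w))$ with a gauge transformation obtained from Proposition \ref{pr:gauge}, and your ansatz $\tilde h(t,w)=(t+\lambda(w),h(w))$ is exactly this composition made explicit. The only difference is cosmetic: you carry out the gauge step directly rather than invoking the separate proposition.
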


\begin{proof}
In the view of Proposition \ref{pr:bw_fib} it is sufficient to show that $\phi$ is surjective. 
Indeed, since $H^{1}(W) = \{ 0 \}$,
Lemma \ref{lem:iso_lift} below shows that for any $h\in\Isom_{\! h}(W)$, 
there exists an isometry $\tilde h\in \Psh (X)$, which is a lift of $h$, \ie $\phi(\tilde h) = h$. 
\end{proof}

\smallskip 
Proposition \ref{pr:noncompact} is implied by the following 
basic lifting result for holomorphic and anti-holomomorphic isometries of the K\"ahler quotient 
$W$:
\begin{lemma} \label{lem:iso_lift}
Assume  that $H^{1}(W) = \{0\}$, and let $h \in\Isom (W)$ satisfy 
 $h^* \Omega = \mu \,  \Omega$, where $\mu \in \{\pm 1\}$.
Then there exists an isometry $\tilde{h} \in  \Pspm(X)$
 such that $\tilde h$ induces 
$h$ on  $W$ and satisfies $\tilde h^*\om  = \mu \,  \om$. 
If $\mu = 1$ then $\tilde h\in \Psh (X)$. 
\end{lemma}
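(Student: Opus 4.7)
The plan is to construct the lift explicitly via the product trivialization from Proposition \ref{pr:oductiden}, identifying $X$ with $\RR \times W$ and $\om$ with $\om_0 = dt + \mathrm{pr}^*\theta$, where $d\theta = \Omega$. Given $h \in \Isom(W)$ with $h^*\Omega = \mu\Omega$, I first observe that $d(h^*\theta - \mu\theta) = h^*\Omega - \mu\Omega = 0$. The hypothesis $H^1(W,\RR) = \{0\}$ then produces a smooth function $\lambda\colon W \to \RR$ with $h^*\theta - \mu\theta = d\lambda$.

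I then set
\[
\tilde h(t, w) \; = \; \bigl(\mu t - \lambda(w),\, h(w)\bigr).
\]
A direct computation yields
\[
\tilde h^*\om = \mu\, dt - \mathrm{pr}^*\! d\lambda + \mathrm{pr}^* h^*\theta = \mu\, dt + \mathrm{pr}^*\!(\mu\theta) = \mu\om,
\]
so $\tilde h_* \cA = \mu\cA$ and $\tilde h$ preserves $\ker\om$; in particular $\tilde h \in \Pspm(X)$. By construction $q\circ \tilde h = h\circ q$, so $\tilde h$ covers $h$.

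The main step is to verify that $\tilde h$ is a Riemannian isometry of the Sasaki metric $g = \om\cdot\om + d\om\circ J$. Since $\tilde h^*\om = \mu\om$ and $\mu^2 = 1$, the first summand is preserved. For the second, I use that any K\"ahler isometry with $h^*\Omega = \mu\Omega$ is automatically (anti-)holomorphic: the identity $g(JX,Y) = \Omega(X,Y)$ on $W$ together with $h^*g = g$ and $h^*\Omega = \mu\Omega$ forces $h_* J = \mu J h_*$. Because $q_*$ intertwines $J$ on $\ker\om$ with $J$ on $W$ and $q\circ\tilde h = h \circ q$, the same relation $\tilde h_* J = \mu J \tilde h_*$ then holds on $\ker\om$. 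Combined with $\tilde h^*d\om = \tilde h^* q^*\Omega = q^* h^*\Omega = \mu\, d\om$, this yields
\[
\tilde h^*(d\om\circ J)(X,Y) = d\om(\mu J\tilde h_* X, \tilde h_* Y) = \mu^{2}\, d\om(JX,Y) = d\om(JX,Y),
\]
hence $\tilde h^*g = g$.

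Finally, when $\mu = 1$ the identities $\tilde h^*\om = \om$ and $\tilde h_* \circ J = J \circ \tilde h_*$ on $\ker\om$ together give $\tilde h \in \Psh(X)$. The main technical point is the verification of isometry on the horizontal summand $d\om\circ J$, which rests on the automatic (anti-)holomorphicity of $h$ and on the commutation $q\circ \tilde h = h\circ q$; the cohomological hypothesis $H^1(W) = \{0\}$ enters only to produce the gauge correction $\lambda$ that promotes the naive candidate $(t, w)\mapsto (\mu t, h(w))$ into a map which exactly pulls back $\om$ to $\mu\om$.
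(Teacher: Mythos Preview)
Your proof is correct and follows essentially the same route as the paper: both work in the product model $X=\RR\times W$, use $H^{1}(W)=\{0\}$ to produce a potential $\lambda$, and define the lift as a fiber-translation correction of the naive lift of $h$. The paper packages the construction of $\lambda$ into Proposition~\ref{pr:gauge} (so that $\tilde h = G\circ\tilde h'$ with $G$ a gauge transformation) and simply asserts the isometry property, whereas you write the explicit formula $\tilde h(t,w)=(\mu t-\lambda(w),h(w))$ and verify $\tilde h^{*}g=g$ by hand via $\tilde h_{*}J=\mu J\tilde h_{*}$ on $\ker\om$.
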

\begin{proof} 
We may assume
$X=\RR\times W$. Define $\tilde h' (t,w) = (t, h(w))$ to be the
canonical lift of $h$. 
Then $\om' =  \mu \cdot  (\tilde h')^{*}   \om$ defines another 
pseudo-Hermitian structure on $X$ which is compatible with 
$(W, \Omega)$. By Proposition \ref{pr:gauge}, there exists 
a gauge transformation $G: X \to X$ with $G^{*} \,  \om' = \om$.
Therefore, $$ \tilde h = G \circ \tilde h'$$ 
satisfies $\tilde h^{*}  \om  = \mu \cdot \om$,  and it is an
 isometric lift of $h$ for the metric 
 $g = \om  \cdot \om + d\omega \circ J$. 
 It also follows $\tilde h^{*}  \cA = \mu \cA$. Thus, $\tilde h \in  \Pspm(X)$.
\end{proof}


\section{Homogeneous Sasaki manifolds}\label{sec:hsasaki}
Suppose  that the Lie group $G$ acts transitively by pseudo-Hermitian isometries on the Sasaki manifold $X$.
Then 
$$ X = G/H$$  is  called a homogeneous Sasaki manifold.  
Since $X$ is also a complete Riemannian manifold with respect to the Sasaki metric $g$, the Reeb field $\cA$ for $X$, which is a Killing field for the metric $g$, is a complete vector field.  
Let $$ T=\{\varphi_t\}_{t\in \RR}$$  denote the 
$1$-parameter group on $G/H$ generated by the Reeb field.

\subsection{Natural fibering over homogeneous K\"ahler manifold}  \label{sec:bwhomogeneous}
Since $T$ commutes with $G$, there exists a one-parameter
subgroup 
\begin{equation} \label{eq:sfA} 
{\sf A}=\{a_t\}_{t\in \RR} \; \leq N_G(H)
\end{equation} such that 
\begin{equation} \label{right}
\varphi_t(xH)=xa_t^{-1}H \; ,
\end{equation} where $N_G(H)$ denotes the normalizer of $H$ in $G$. 

\begin{pro}\label{closed}
$T$ is a closed subgroup in $\Psh(G/H)$.
In particular,
$T$ is isomorphic to $S^1$ or $\RR$, and it is
acting properly on $G/H$. 
\end{pro}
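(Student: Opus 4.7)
The plan is to establish closedness of $T$ first; the other two assertions follow quickly from that. I would begin by noting that $\Isom(G/H)$ is a Lie group (Myers--Steenrod, using completeness of the $G$-invariant metric) and that $\Psh(G/H)$ is a closed subgroup of it, defined by the closed conditions $h^{*}\om=\om$ and $h_{*}\circ J=J\circ h_{*}$ on $\ker\om$. Let $\bar T$ denote the closure of $T$ in $\Psh(G/H)$; as the closure of a connected abelian subgroup, it is a connected abelian Lie subgroup containing $T$. The whole task reduces to showing $\dim\bar T=1$.

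For each $Y$ in the Lie algebra $\bar{\mathfrak{t}}$ of $\bar T$ I would record four properties: (i) $Y$ is a Killing field for the Sasaki metric; (ii) $\mathcal{L}_{Y}\om=0$, because $T$ preserves $\om$ and this is a closed condition; (iii) $[Y,\cA]=0$, because $\bar T$ is abelian and $\cA$ generates $T$; (iv) $Y$ is $G$-invariant, because $T$ is central in $\Psh(G/H)$ by \eqref{CPsh} and hence commutes with $G$, and for each fixed $g\in G$ commutativity with $g$ is a closed condition that passes from $T$ to $\bar T$. From (iv) and transitivity of $G$, the function $f:=\om(Y)$ is $G$-invariant and therefore constant on $G/H$. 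Combining this constancy with (ii) and Cartan's formula,
\begin{equation*}
0 \;=\; \mathcal{L}_{Y}\om \;=\; d(\om(Y))+\iota_{Y}d\om \;=\; \iota_{Y}d\om\, .
\end{equation*}
Because the Levi form $d\om$ is nondegenerate on $\ker\om$ with one-dimensional kernel $\RR\cA$, this forces $Y=h\cdot\cA$ pointwise for some smooth function $h$; evaluating $\om$ on both sides gives $h=f$, and hence $Y=c\cdot\cA$ for a constant $c\in\RR$. Therefore $\bar{\mathfrak{t}}=\RR\cA$, so $\bar T$ is a one-dimensional connected Lie group; the nontrivial 1-parameter subgroup $T$ must exhaust it, so $T=\bar T$ is closed.

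To finish, a closed connected 1-dimensional Lie group is isomorphic to $\RR$ or to $S^{1}$, which settles the structure part; and since $\Isom(G/H)$ acts properly on the complete Riemannian manifold $G/H$, its closed subgroup $T$ inherits a proper action as well. The main obstacle is the identity $\iota_{Y}d\om=0$: it is precisely the interplay between $G$-invariance of $Y$ (which trivializes the exact term $d(\om(Y))$) and the nondegeneracy of $d\om$ on the contact distribution that locks $Y$ onto the Reeb direction. Without the pseudo-Hermitian structure the analogous conclusion would be false, as irrational flows on a torus already show.
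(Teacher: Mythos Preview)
Your proof is correct and follows essentially the same route as the paper's: one passes to the closure $\bar T$, uses that $T$ commutes with $G$ (and that this commutativity is a closed condition) to see that every vector field $Y$ in the Lie algebra of $\bar T$ is $G$-invariant, deduces that $\om(Y)$ is constant, applies Cartan's formula together with $\mathcal{L}_{Y}\om=0$ to obtain $\iota_{Y}d\om=0$, and then invokes nondegeneracy of the Levi form to force $Y\in\RR\cA$. Your write-up supplies a bit more scaffolding (Myers--Steenrod, closedness of $\Psh$ in $\Isom$, the properness conclusion), but the core argument is identical to the paper's.
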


\begin{proof}The Reeb field $\cA$ is uniquely determined by the equations:
\[\om(\cA)=1,\ \ \iota_{\cA}d\om=0.\]
Let $\displaystyle D=\overline{\{\varphi_t\}}_{t\in \RR} \leq \Psh(G/H)$ be the closure. 
As $L_g\varphi_t=\varphi_tL_g$ $(\forall\,g\in G)$ from \eqref{right},
every element of $D$ commutes with $G$.
Thus every vector field $\cB$ induced from one-parameter groups
in $D$ is left-invariant. 
In particular, $\om(\cB)$ is constant. By the Cartan formula, 
it follows $\displaystyle\iota_{\cB}d\om=0$.
If $\om(\cB)\neq 0$, by uniqueness of the  Reeb field,  $\cB=\cA$ up to a constant multiple on $G/H$. 
When $\om(\cB)=0$, the non-degeneracy of the Levi form $d\om\circ J$ on
${\rm ker\, \om}$ implies $\cB=0$ on $G/H$. 
This shows $\displaystyle D=\{\varphi_t\}_{t\in \RR}$.
\end{proof}

\begin{lemma}\label{lem:free}
$T$ acts freely on $G/H$.
\end{lemma}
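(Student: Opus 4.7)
The plan is to exploit the explicit formula \eqref{right} for the Reeb flow, which expresses $\varphi_t$ as right translation by $a_t^{-1}$ on the coset space $G/H$, where $\{a_t\} = {\sf A}$ is the one-parameter subgroup of $N_G(H)$ from \eqref{eq:sfA}. First, I would suppose that $\varphi_t(xH) = xH$ for some $x \in G$ and some $t \in \RR$. By \eqref{right} this reads $x a_t^{-1} H = xH$, equivalently $a_t^{-1}H = H$, i.e.\ $a_t \in H$. The crucial observation is that this condition is \emph{independent} of the base point $xH$.

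Next, I would run the implication backwards: if $a_t \in H$, then \eqref{right} forces $\varphi_t(yH) = y a_t^{-1} H = yH$ for every $y \in G$, so $\varphi_t$ coincides with the identity diffeomorphism of $G/H$, and hence with the neutral element of the subgroup $T \leq \Psh(G/H)$. Combining both directions, the $T$-stabilizer of any point $xH \in G/H$ consists only of the identity element of $T$, which is precisely the definition of a free action.

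The argument is essentially formal once \eqref{right} is in hand, and there is no real geometric obstacle. The only conceptual point worth highlighting is the distinction between the kernel $\{t \in \RR : a_t \in H\}$ of the parametrization $t \mapsto \varphi_t$, which may well be nontrivial (corresponding to the compact case $T \cong S^1$ from Proposition~\ref{closed}, where the Reeb flow is periodic), and the stabilizers of individual points of $G/H$ inside the abstract group $T$. The above argument shows that the latter are trivial, regardless of whether the former is trivial, so the $T$-action is free in either case.
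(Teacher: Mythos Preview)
Your argument is correct and is essentially identical to the paper's own proof: both use \eqref{right} to deduce that a single fixed point forces $a_t\in H$, hence $\varphi_t=\mathrm{id}$ on all of $G/H$, and then invoke effectiveness of the $T$-action to conclude $\varphi_t=1$ in $T$. Your closing remark distinguishing the kernel of the parametrization from point-stabilizers is a helpful clarification but not part of the paper's (terser) version.
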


\begin{proof}
If $\varphi_{t_0}(x_0H)=x_0a_{t_0}^{-1}H=x_0H$,   
for some $x_0\in G$, then $a_{t_0}\in H$ and so
$\varphi_{t_0}(xH)=xH$\ $(\forall\,x\in G)$.
Since $T$ acts \emph{effectively},
$\varphi_{t_0}=1$.  
\end{proof}

\smallskip
In particular, any homogeneous Sasaki manifold $X= G/H$  is 
a regular Sasaki manifold (\cf \cite{bw}).  
Moreover, by Proposition \ref{pr:bw_fib} the K\"ahler quotient  
$$W = (G/H) \big /\, T$$ is a homogeneous 
K\"ahler manifold for $G$. That is,  $G$ is acting transitively by holomorphic isometries on $W$. We thus have: 

\begin{theorem}[Boothby-Wang fibration \mbox{\cite{bw}}] \label{SasakiKprin}
Every homogeneous Sasaki  manifold  $X= G/H$
arises as 
a principal $T$-bundle over a
homogeneous K\"ahler manifold $W$ which takes the form:  
\begin{equation}\label{eq:sasakifibK}
\begin{CD}
T@>>> G/H@>{\sf q}>> W = G/ H {\sf A}  \; .
\end{CD}\end{equation}
\end{theorem}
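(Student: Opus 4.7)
The plan is to assemble the theorem from the preceding results with minimal extra work: the $T$-action has already been shown to be free (Lemma \ref{lem:free}) and proper, and the Boothby-Wang fibration (Proposition \ref{pr:bw_fib}) already produces a principal $T$-bundle over a K\"ahler manifold; the only genuinely new content is identifying the base of this bundle with the coset space $G/H{\sf A}$ and checking that $G$ acts on it as a transitive group of holomorphic isometries.

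First I would collect what is already known. Proposition \ref{closed} says $T$ is closed in $\Psh(G/H)$, hence acts properly on $G/H$ (since $\Psh(G/H) \leq \Isom(G/H)$ acts properly), and Lemma \ref{lem:free} says it acts freely. Therefore Proposition \ref{pr:bw_fib} applies and yields a principal $T$-bundle
\[
T \to G/H \xrightarrow{{\sf q}} W = (G/H)/T,
\]
where $W$ carries an induced K\"ahler structure and ${\sf q}$ is pseudo-Hermitian.

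Next I would identify the base. Since ${\sf A} = \{a_t\} \leq N_G(H)$, the product $H{\sf A} = {\sf A}H$ is a subgroup of $G$. Using the formula $\varphi_t(xH) = x a_t^{-1} H$ from \eqref{right}, the $T$-orbit through $xH$ is
\[
T \cdot xH = \{x a_t^{-1} H : t \in \RR\},
\]
which is precisely the image of the coset $x(H{\sf A})$ under $G \to G/H$. Hence the natural surjection $G/H \to G/H{\sf A}$ factors the quotient map ${\sf q}$ and induces a $G$-equivariant bijection $G/H{\sf A} \to W$. Properness of the $T$-action forces the $T$-orbits to be closed in $G/H$, so $H{\sf A}$ is closed in $G$ and the bijection is a diffeomorphism, giving $W = G/H{\sf A}$.

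Finally, $G$ acts transitively on $W = G/H{\sf A}$ by construction, and this action is by holomorphic isometries: indeed, $G \leq \Psh(G/H)$ maps under the Boothby-Wang homomorphism \eqref{eq:homphi} $\phi : \Psh(G/H) \to \Isom_{\!h}(W)$, and by \eqref{eq:prinmap} the $G$-action descends compatibly through ${\sf q}$. Thus $W$ is a homogeneous K\"ahler manifold and the fibration in \eqref{eq:sasakifibK} is established. The potentially delicate point — closedness of $H{\sf A}$ — is handled automatically by properness of $T$; beyond that the argument is a direct unpacking of the already-proved facts.
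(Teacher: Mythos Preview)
Your proposal is correct and follows essentially the same approach as the paper: the paper treats this theorem as an immediate consequence of Proposition \ref{closed}, Lemma \ref{lem:free}, and Proposition \ref{pr:bw_fib}, simply remarking that $X$ is therefore regular and that the K\"ahler quotient $W = (G/H)/T$ is homogeneous K\"ahler for $G$. You fill in more detail than the paper does---in particular the explicit identification of the $T$-orbits with the $H{\sf A}$-cosets and the closedness of $H{\sf A}$---but this is the same argument spelled out more carefully.
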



%

\smallskip 
\begin{remark} If $G/H$ is contractible, so is $G/ H {\sf A} $, and in this case $T \cong \RR$. 
\end{remark} 

\smallskip
The following \emph{existence and uniqueness result}  for contractible homogeneous   Sasaki manifolds  is  now a direct consequence of Section \ref{sec:isocontra}: 

\begin{corollary}[Contractible homogeneous Sasaki manifolds] \label{cor:hom_sasaki}
Let $(W,\Omega, J)$ be a homogeneous K\"ahler manifold which is contractible. Then there exists a contractible homogeneous Sasaki manifold $(X, \{\om, J\})$ which has K\"ahler quotient $(W,\Omega, J)$. Moreover, with these properties, the Boothby-Wang fibration \eqref{eq:sasakifibK} for $X$ has fiber $\RR$, and $X$ is uniquely defined up to a pseudo-Hermitian  isometry.  
\end{corollary}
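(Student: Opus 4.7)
The plan is to construct $X$ explicitly as a trivial principal $\RR$-bundle over $W$, carrying a pseudo-Hermitian structure built from a primitive of the K\"ahler form; then deduce homogeneity from the Boothby-Wang exact sequence of Proposition \ref{pr:noncompact}; and finally establish uniqueness by combining the product representation of Proposition \ref{pr:oductiden} with the gauge rigidity of Proposition \ref{pr:gauge}.

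\emph{Existence.} Since $W$ is contractible, $H^{1}(W,\RR)=H^{2}(W,\RR)=0$, so the K\"ahler form is exact: $\Omega=d\theta$ for some one-form $\theta$ on $W$. Set $X=\RR\times W$ and define, as in \eqref{eq:canoform},
\[
\om_{0}=dt+\mathrm{pr}^{*}\theta,\qquad \cA=\partial_{t},
\]
together with the almost complex structure $\tilde J$ on $\ker\om_{0}$ determined by \eqref{alcomp}. By the discussion following \eqref{eq:odtheta} (invoking \cite[Theorem 2]{hatake} for integrability of the CR structure), the triple $\{\om_{0},\tilde J,\cA\}$ is a pseudo-Hermitian structure on $X$ with K\"ahler quotient $(W,\Omega,J)$. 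As a product of contractible spaces, $X$ is contractible. Now apply Proposition \ref{pr:noncompact}, whose hypothesis $H^{1}(W)=0$ is satisfied: the Boothby-Wang sequence
\[
1\longrightarrow\RR\longrightarrow\Psh(X)\stackrel{\phi}{\longrightarrow}\Isom_{\!h}(W)\longrightarrow 1
\]
is exact, and since $W$ is a homogeneous K\"ahler manifold the group $\Isom_{\!h}(W)$ acts transitively on $W$. Hence $\Psh(X)$ acts transitively on $X$, so $X$ is a contractible homogeneous Sasaki manifold with the required K\"ahler quotient.

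\emph{Fiber is $\RR$.} Let $X'$ be any contractible homogeneous Sasaki manifold with K\"ahler quotient $(W,\Omega,J)$. By Theorem \ref{SasakiKprin}, the Reeb flow $T$ of $X'$ is a closed subgroup acting freely and properly, so $T\cong S^{1}$ or $T\cong\RR$. If $T=S^{1}$, then the principal bundle $X'\to W$ would give $X'$ the homotopy type of a circle bundle over the contractible base $W$, hence the homotopy type of $S^{1}$, contradicting contractibility of $X'$. Thus $T=\RR$.

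\emph{Uniqueness.} Because $W$ is contractible, the principal $\RR$-bundle $X'\to W$ is trivial; choosing a section, Proposition \ref{pr:oductiden} identifies $X'$ pseudo-Hermitian-equivariantly with $(\RR\times W,\{\om_{0}',\tilde J',\partial_{t}\})$, where $\om_{0}'=dt+\mathrm{pr}^{*}\theta'$ for some primitive $\theta'$ of $\Omega$. The one-form $\theta'-\theta$ is closed on $W$, and since $H^{1}(W,\RR)=0$ we may write $\theta'-\theta=d\lambda$ for some smooth $\lambda:W\to\RR$. Applying Proposition \ref{pr:gauge} (or its direct computation via $G(t,w)=(t+\lambda(w),w)$) produces a gauge transformation of the trivial $\RR$-bundle carrying $\om_{0}'$ to $\om_{0}$. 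The complex structures on $\ker\om_{0}$ and $\ker\om_{0}'$ both descend to $J$ on $W$, so by \eqref{alcomp} the gauge transformation is pseudo-Hermitian, yielding the required isomorphism $X'\cong X$.

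\emph{Main obstacle.} The substantive step is the uniqueness statement: it must be argued carefully that two a priori unrelated homogeneous Sasaki structures with the same K\"ahler quotient give gauge-equivalent connection forms on the trivial $\RR$-bundle, and that the resulting gauge transformation is not merely a bundle automorphism but a genuine pseudo-Hermitian isometry (in particular, compatible with the complex structures on the contact distributions). This is exactly what Propositions \ref{pr:oductiden} and \ref{pr:gauge} provide, and the role of contractibility is to guarantee both triviality of the bundle and the vanishing of $H^{1}(W,\RR)$ needed to apply them.
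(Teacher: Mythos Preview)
Your proof is correct and follows essentially the same route as the paper: construct $X=\RR\times W$ with the connection form \eqref{eq:canoform}, invoke Proposition \ref{pr:noncompact} for homogeneity, and establish uniqueness via Proposition \ref{pr:oductiden} followed by Proposition \ref{pr:gauge}. Your argument is slightly more explicit than the paper's (you justify $T=\RR$ by a homotopy argument and spell out why the gauge transformation respects the CR structures via \eqref{alcomp}), but the logical skeleton is identical.
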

\begin{proof} Indeed, we may choose on the trivial principal bundle $X = \RR \times W$, the pseudo-Hermitian structure \eqref{eq:odtheta}, which has Reeb field $\cA= {\partial \over \partial t}$ and K\"ahler quotient $(W,\Omega, J)$.  By Proposition \ref{pr:noncompact}, $(X, \{ \om, J, \cA \})$ is a homogeneous Sasaki manifold. Let $(X',  \{ \om', J', \cA' \})$ be another contractible Sasaki manifold which has  $(W,\Omega, J)$ as a K\"ahler quotient. Then the Boothby-Wang fibration for $X'$ has fiber $\RR$, and,  by Proposition \ref{pr:oductiden},  
there exists a  pseudo-Hermitian isometry from $X'$ to 
$(X,  \{ \om', J', \cA\})$. By Proposition \ref{pr:gauge}, the latter admits a  pseudo-Hermitian 
isometry  to $(X, \{ \om, J, \cA \})$ which is given by a gauge transformation of the bundle $X$. This implies the claimed uniqueness. 
\end{proof}

%

\subsection{Pseudo-Hermitian presentations of  $W$} \label{sec:presentations}
Let $X$ be a homogeneous Sasaki manifold with group $G$ and $W$ its K\"ahler quotient. We describe now the types of homogeneous presentations  $$W = G/H {\sf A}  $$ which can  arise in the associated Boothby-Wang fibration  \eqref{eq:sasakifibK}. For this we assume that 
$$   G \, \leq \, \Psh(X) $$ 
is a closed subgroup. In particular, 
$G$ is acting faithfully on $X$. With this assumption the  stabilizer $H$ is always compact, since $G$ is a closed group of isometries for $X$. 

\smallskip
\begin{lemma} \label{lem:Delta}
Let $ \Delta$ denote the kernel of the induced $G$-action on the K\"ahler quotient $ W$ of $X$. Then  the following hold:
\begin{enumerate}
\item $ H \, {\sf A}  =  H \rtimes {\sf A} $ decomposes as a semi-direct product.
\item  $\Delta  \leq  H {\sf A} $, and,  $\bar L =  H  {\sf A}  \big/ \Delta$ is  compact. 
\item  $\Delta = T \cap G$, in particular, $\Delta$ is central in $G$. 
\item  If\/ $\fA$ is non-compact then the projection homomorphism $\pi_{\sf A}: H {\sf A}  \to  {\sf A}$ maps $\Delta$ injectively to a closed  subgroup of $ {\sf A}$. 
\item If $\fA$ is normal in $G$ then $\fA$ is central in $G$.
\end{enumerate}
\end{lemma}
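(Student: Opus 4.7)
The plan is to prove each of the five claims in turn, using the Boothby--Wang picture from Section~\ref{sec:bwhomogeneous}, the centrality relation \eqref{CPsh}, and the freeness of the Reeb flow (Lemma~\ref{lem:free}). Part~(1) is immediate: since $\mathsf{A}\leq N_{G}(H)$, the subgroup $\mathsf{A}$ normalizes $H$, so $H\mathsf{A}$ is a subgroup and it only remains to verify $H\cap\mathsf{A}=\{1\}$. If $a_{t_{0}}\in H$ then $\varphi_{t_{0}}(eH)=a_{t_{0}}^{-1}H=eH$, and by the argument used for Lemma~\ref{lem:free} this forces $\varphi_{t_{0}}=1$ on $X$, hence $t_{0}=0$ by effectiveness of $T$, hence $a_{t_{0}}=1$. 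For part~(2), the inclusion $\Delta\leq H\mathsf{A}$ is automatic since $\Delta$ stabilizes the base coset in $W=G/H\mathsf{A}$. Compactness of $\bar L = H\mathsf{A}/\Delta$ then follows because $G/\Delta$ acts faithfully by holomorphic isometries on the Riemannian manifold $W$, so the linear isotropy representation of $\bar L$ at the basepoint embeds $\bar L$ into the orthogonal group of a finite-dimensional Euclidean space.

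The main step, and the main obstacle, is the nontrivial direction of part~(3), namely $\Delta\subseteq T$. Given $\delta\in\Delta$, triviality on $W$ means $\delta(x)=\varphi_{t(x)}(x)$ for a smooth function $t\colon X\to\RR$; since $\delta$ commutes with $T$ by \eqref{CPsh}, the function $t$ is $T$-invariant and descends to $\bar t\colon W\to\RR$. To conclude that $\bar t$ is constant one uses that $\delta$ is pseudo-Hermitian: in a local trivialization of the form $X|_{U}\cong\RR\times U$ with $\omega=dr+\pi^{*}\theta$ as in \eqref{eq:canoform} and Proposition~\ref{pr:oductiden}, the map $\delta$ takes the form $(r,w)\mapsto(r+\bar t(w),w)$, and a direct computation yields $\delta^{*}\omega=\omega+\pi^{*}d\bar t$. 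The equality $\delta^{*}\omega=\omega$ (because $\delta\in G\leq\Psh(X)$) then forces $d\bar t=0$, and connectedness of $W$ gives $\delta=\varphi_{\tau}\in T$. The reverse inclusion $T\cap G\leq\Delta$ is tautological from $W=X/T$. The centrality statement is then immediate: $\Delta\leq T$ and $T$ is central in $\Psh(X)$ by \eqref{CPsh}, so $\Delta$ is central in $G\leq\Psh(X)$. I expect this pseudo-Hermitian gauge identity to be the crux of the whole argument, as it is the step where the Sasaki structure (rather than merely the principal bundle structure) is genuinely used.

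For part~(4), the kernel of $\pi_{\mathsf{A}}|_{\Delta}$ is $\Delta\cap H\leq T\cap H=\{1\}$ by freeness of the Reeb action, which gives injectivity. For closedness of the image in the case $\mathsf{A}\cong\RR$, one notes that $a_{t}\mapsto\varphi_{t}$ is a continuous isomorphism $\mathsf{A}\to T$ (surjectivity is built into the definition of $\mathsf{A}$, injectivity is exactly the computation of part~(1)), and that $\Delta=T\cap G$ is closed in $T$ because both $T$ and $G$ are closed in $\Psh(X)$ (Proposition~\ref{closed}). A short computation in $H\mathsf{A}=H\rtimes\mathsf{A}$, comparing the decomposition $\delta=ha_{s}$ with $\delta=\varphi_{\tau}$ and using that $\mathsf{A}$ normalizes $H$, shows $s=-\tau$, so $\pi_{\mathsf{A}}|_{\Delta}$ is (up to inversion) the composition $\Delta=T\cap G\hookrightarrow T\cong\mathsf{A}$ and therefore has closed image. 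Finally, for part~(5), if $\mathsf{A}$ is normal in $G$ then $\mathrm{Ad}(x^{-1})\mathsf{A}=\mathsf{A}\subseteq H\mathsf{A}$ for every $x\in G$, so $\mathsf{A}$ acts trivially on $W$ and hence $\mathsf{A}\leq\Delta$. Combined with part~(3) this yields $\mathsf{A}\leq T\cap G\leq T$, and \eqref{CPsh} then implies that $\mathsf{A}$ is centralized by all of $\Psh(X)\supseteq G$.
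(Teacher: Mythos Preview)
Your proof is correct, but you have badly misjudged where the difficulty lies. Part~(3) is \emph{not} the main obstacle: the paper dispatches it in one line by invoking Proposition~\ref{pr:bw_fib}, which already established that the Boothby--Wang homomorphism $\phi\colon\Psh(X)\to\Isom_{\! h}(W)$ has kernel exactly $T$. Since $\Delta$ is by definition the kernel of $\phi$ restricted to $G$, one gets $\Delta=G\cap\ker\phi=G\cap T$ immediately, and centrality follows from \eqref{CPsh}. Your gauge computation with $\delta^{*}\omega=\omega+\pi^{*}d\bar t$ is correct but amounts to reproving $\ker\phi=T$, which was already done (via one-jet rigidity of isometries) in the proof of Proposition~\ref{pr:bw_fib}.

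For part~(4), your injectivity argument ($\Delta\cap H\leq T\cap H=\{1\}$ by freeness of the Reeb action) is cleaner than the paper's. The paper instead argues that $C=\Delta\cap H$, being the maximal compact subgroup of the abelian group $\Delta$, is characteristic in $\Delta$ and hence normal in $G$; since $C\leq H$ and $G$ acts effectively on $G/H$, this forces $C=\{1\}$. Both routes work; yours uses part~(3) more directly. Your treatment of closedness of $\pi_{\mathsf A}(\Delta)$, via the identification $\varphi_{\tau}\leftrightarrow a_{-\tau}$, is also more explicit than the paper's.

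One genuine soft spot: in part~(2) your linear isotropy argument shows that $\bar L$ injects into $O(T_{w}W)$, but compactness of $\bar L$ requires the image to be closed, equivalently that $\bar G$ is closed in $\Isom_{\! h}(W)$. You have not established this. The paper's argument at this point (deducing closedness of $\bar G$ from properness of $\pi_{\mathsf A}$) is likewise terse, so this is a shared delicacy rather than a flaw peculiar to your write-up.
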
 
\begin{proof}
Since $T$ acts freely on $G/H$, we infer from \eqref{right} that ${\sf A} \cap H = \{ 1\}$. 
This implies that  $$ H \, {\sf A}  =  H \rtimes {\sf A} $$ is  a semi-direct product, proving (1).  Let \begin{equation*} 
	\pi_{\sf A}: H {\sf A}  \to  {\sf A} 
\end{equation*}  denote 
the projection homomorphism. Since $H$ is compact, the homomorphism $\pi_{\sf A}$ is proper.  
Therefore,  the image $\bar G$ of $G$ in $\Isom(G\big/H {\sf A} )$ is closed  and acts properly on 
$ W = G/ H {\sf A}  = \bar G/ \bar L $.  We deduce that  $\bar L  =  H  {\sf A}  \big/ \Delta$ is  a compact subgroup of $\bar G = G/\Delta$. Thus, (2) holds.

Since the homomorphism $\phi$ in \eqref{eq:homphi}  which maps $G$ to $\bar G$  has kernel $T$, $$ \Delta = G \cap T \, , $$  where the intersection is taken in $\Psh(X)$.  Recall that $T$ is central in $\Psh(X)$. Therefore, $\Delta$ is central in $G$. Hence, (3).

Next, consider 
$ C = \ker \pi_{\sf A} \cap \Delta = H \cap \Delta $. Assuming that $\fA$ is a vector group, $C$ is  the unique maximal compact subgroup of $\Delta$. Since $\Delta$ is normal in $G$, so is $C$. Since $C$ is also a subgroup of $H$ and $G/H$ is  effective, we deduce that $C = \{1 \}$. This shows that  $\Delta$ is isomorphic to the closed subgroup $\pi_{\sf A}(\Delta) \leq  {\sf A}$, proving (4).

Finally,
%
%
 assume that   $\fA$ is normal in $G$. Then the left-multiplication orbits of $\fA$ on $G/H$ coincide with the 
orbits of $T$. That is, for all $g \in G$:  $$ T \cdot g H =  g \cdot \fA H = \fA \cdot g H  \; . \text{} $$ 
In particular, the left-action of $\fA$ on $G/H$ (which is by pseudo-Hermitian isometries)  induces the trivial action 
on the K\"ahler-quotient $W$ by the fibration sequence \eqref{eq:sasakifibK}. That is, 
$\fA \leq \Delta$  and by (3),  $\fA \leq T$. This implies that $\fA = T$ is central in $G$. 
%
\end{proof}

Two principal cases are arising,  according to whether $\Delta$ is a continuous group  or $\Delta$ is a discrete subgroup of $G$. Recall first that either $\fA = S^1$ or $\fA = \RR$. Then we have: 

\smallskip 
\paragraph{\bf Case I ($\mathbf{\Delta = \fA}$, $T$ is contained in $G$)} We suppose here that $\fA$ can be chosen to be a normal subgroup in $G$.  
By (5) of Lemma \ref{lem:Delta}, it follows that the isometries induced by the left-action of $\fA$ are contained in  the kernel of the homomorphism 
$\phi: \Psh(X) \to \Isom_{\! h}(W)$, which is just $T$. 
Since $\fA$ is a non-trivial connected (one-dimensional) group, this implies $$T = \fA = \Delta$$ as subgroups of $\Psh(X)$. 
Then the fibration   \eqref{eq:sasakifibK} turns into a  principal bundle of homogeneous spaces of the form
\begin{equation}  \tag{I} \label{eq:sasakifibKI}  
\begin{CD}
{\sf A}@>>> G/H@>{\sf q}>> W= \left(G/{\sf A}\right) /H = \bar G / \bar H  \; .
\end{CD}\end{equation}
where $\bar H = H$ and the group $\bar G$ is described  by  an exact sequence of groups
\begin{equation}  \tag{I'}  \label{eq:sasakifibGI} %
\begin{CD} 
1@>>> \fA =\RR  @>>> G@>{\phi}>> \bar G @>>>1 \;  .
\end{CD}
\end{equation}

%
%


\smallskip 
\paragraph{\bf Case II ($\mathbf{ {\sf A} = \RR, \Delta = \ZZ}$)}  We are  assuming that  ${\sf A} \cong \RR$ (for example, if  $G/H$ is contractible). By Lemma \ref{lem:Delta} (4), the central subgroup $\Delta$ of $G$ is either infinite cyclic (and discrete) or $\Delta$ is a closed one-parameter subgroup in $ H  {\sf A} $ which is 
is projecting surjectively onto ${\sf A}$.  Since $\Delta$ is contained in $T$, and $T$ is one dimensional, we 
deduce $\Delta = T$, in the latter case. This situation was already described in Case I above. 

\smallskip 
So for case II,  $\Delta = T \cap G$ is infinite cyclic and central in $G$. 
Moreover, $\Delta \leq  H \fA$ and by Lemma \ref{lem:Delta} (4) 
the map $\pi_{\fA}$  is projecting $\Delta$ injectively onto a 
discrete lattice $\mathcal{Z}$ in ${\sf A}$. 
Denote with $\bar \fA$ the image of $\fA$ in $\bar G = G/\Delta$. 
Then the Boothby-Wang fibration \eqref{eq:sasakifibK} can be written in  the form  
\begin{equation}  \tag{II} \label{eq:sasakifibKII}
\begin{CD}
{\sf A}@>>> G/H@>{\sf q}>> W=  \bar G\big/ \bar H \bar {\sf A}   \; .
\end{CD}
\end{equation}
where the group $\bar G$ is described  by the exact sequence 
\begin{equation} \tag{II'}   \label{eq:sasakifibGII}
\begin{CD} 
1@>>> \Delta = \ZZ  @>>> G@>{\phi}>> \bar G @>>>1 \;  . 
\end{CD}
\end{equation}

\smallskip  
Recall also that $\bar L =  \bar H \bar {\sf A} $ is a compact subgroup of $\bar G$,
and $\bar H$ is a compact normal subgroup in $ \bar H \bar {\sf A}$.
Therefore, the simply connected one-parameter group  $\fA$ may be chosen in such a way that its quotient $\bar \fA$ is a compact circle group, and the intersection $\bar H \cap \bar A$ is finite. 
%
\section{Homogeneous K\"ahler manifolds of unimodular groups} \label{sect:homogeneous_k} 

Let $W$ be a homogeneous K\"ahler manifold. 
The fundamental conjecture  for homogeneous K\"ahler manifolds  (as proved by Dorfmeister and Nakajima \cite{DN}) 
asserts  
that $W$ is a holomorphic fiber bundle 
over a homogeneous bounded domain $D$ 
with fiber the product of a flat space $\CC^k$ with a compact simply connected homogeneous 
K\"ahler manifold. 

\smallskip 
Recall that a Lie group $G$ is called \emph{unimodular} if its Haar measure is biinvariant. Let $\g$ denote the Lie algebra of $G$. If $G$ is connected, then $G$ is unimodular if and only if the trace function over the adjoint representation of $\g$ is zero.

\begin{pro}  \label{pro:hom_Kaehler}
Let $W$ be a contractible homogeneous K\"ahler manifold that admits a connected unimodular subgroup 
$$ G \leq \Isom_{\! h}(W)$$  which acts transitively on $W$. 
Then 
there exists a symmetric  bounded domain $D$ such 
$$W = \CC^k \times D$$  is a K\"ahler direct product. 
\end{pro}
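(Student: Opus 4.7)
The plan is to invoke the Dorfmeister--Nakajima fundamental fibration theorem for homogeneous K\"ahler manifolds and then use the two hypotheses (contractibility of $W$ and unimodularity of $G$) successively to collapse the resulting bundle into a K\"ahler direct product.

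First I would produce, via \cite{DN}, a $G$-equivariant holomorphic fiber bundle
\[
 F \to W \to D
\]
where $D$ is a homogeneous bounded domain and the fiber is biholomorphic to $F = \CC^{k} \times C$ with $C$ a simply connected compact homogeneous K\"ahler manifold. Since every bounded domain in $\CC^{n}$ is contractible, the homotopy long exact sequence of the fibration combined with the contractibility of $W$ forces $F$ to be contractible. But a non-trivial compact simply connected K\"ahler manifold carries a non-zero K\"ahler class in $H^{2}$ and hence has $\pi_{2}(C)\neq 0$. Consequently $C$ must reduce to a point, and $F = \CC^{k}$.

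Next I would transport the unimodularity from $G$ to the base of the remaining fibration $\CC^{k} \to W \to D$. Let $N \trianglelefteq G$ be the kernel of the induced action on $D$, so that $\bar G := G/N$ acts transitively by holomorphic isometries on $D$. The group $N$ acts transitively on each flat fiber $\CC^{k}$ by holomorphic isometries, hence is an extension of the abelian vector group $\CC^{k}$ by a compact isotropy, and is therefore unimodular. Since both $G$ and $N$ are unimodular, the quotient $\bar G$ is unimodular as well. By the structure theory of homogeneous bounded domains (Vinberg--Gindikin--Piatetski-Shapiro, as used also in \cite{DN}), a bounded homogeneous domain admitting a transitive unimodular group of holomorphic automorphisms is necessarily symmetric. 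Thus $D$ is a symmetric bounded domain.

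Finally, to realize $W = \CC^{k} \times D$ as a \emph{K\"ahler} direct product, I would split the bundle as follows. Let $\cV \subseteq TW$ denote the vertical holomorphic distribution tangent to the fibers and $\cH$ its K\"ahler-orthogonal complement; both are $G$-invariant holomorphic distributions. Integrability of $\cV$ is immediate, and integrability of $\cH$ together with the vanishing of the second fundamental forms of both foliations should follow from a de Rham--Hano type argument applied to the simply connected K\"ahler manifold $W$, where unimodularity is used to rule out any twisting term in the $G$-invariant connection. The leaves through a base point are then biholomorphically and isometrically identified with $\CC^{k}$ and $D$, giving the desired K\"ahler product decomposition. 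The main obstacle in carrying this out will be the last step: justifying the K\"ahler de Rham splitting of the Dorfmeister--Nakajima bundle, that is, confirming that unimodularity of $G$ is exactly what is needed to force the horizontal distribution to be parallel and thereby eliminate any non-trivial extension between $\CC^{k}$ and $D$.
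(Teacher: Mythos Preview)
Your overall strategy---DN fibration, kill the compact factor via contractibility, transfer unimodularity to the base to make $D$ symmetric, then split---matches the paper's. But two of your steps do not go through as written.

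\textbf{Unimodularity of $\bar G$.} You assert that the kernel $N$ of the $G$-action on $D$ ``acts transitively on each flat fiber $\CC^k$'' and is therefore ``an extension of $\CC^k$ by a compact isotropy.'' Neither claim follows from the fibration alone: transitivity of a normal subgroup on fibers, and the presence of an abelian normal $\CC^k$ inside $N$, are not formal consequences of a $G$-equivariant bundle. They come from the \emph{quasi-normal} modification in \cite{DN}, which (after replacing $G$ by a unimodularity-preserving modification) produces a closed connected abelian normal subgroup $A \trianglelefteq G$ with $G = AH$ and fiber $A$. The paper uses exactly this refinement: since $\mathfrak a$ is a K\"ahler ideal on which the K\"ahler form $\rho$ restricts non-degenerately, closedness of $\rho$ forces the adjoint action of $H$ on $\mathfrak a$ to preserve the symplectic form $\rho_{\mathfrak a}$, hence to be volume-preserving; together with unimodularity of $G = AH$ this gives $H$ unimodular, and then Hano's theorem \cite{jh} applied to the effective quotient $H/K_1$ makes $D$ symmetric. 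Your shortcut via $N$ can be repaired, but only by invoking this same normal abelian $A$, at which point the argument becomes the paper's.

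\textbf{The K\"ahler splitting.} This is where the paper diverges most from your sketch. It does not argue via integrability or parallelism of the horizontal distribution, and unimodularity plays no direct role in the splitting. Instead, once $D$ is symmetric one obtains a semisimple subgroup $S \leq H$ of non-compact type with $H = K_1 S$; since $S$ has no compact factors it acts trivially on the abelian $A$, so $S$ is normal in $G$, and therefore $\mathfrak s$ is $\rho$-orthogonal to $\mathfrak a$ in the K\"ahler algebra. This Lie-algebra orthogonality directly yields the K\"ahler product $W = \CC^k \times D$. The splitting is thus algebraic, not a de Rham--type curvature computation.
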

\begin{proof} For the proof of the proposition we require some constructions  which are developed in the proof of the fundamental conjecture as it is given in  \cite{DN}. 
The first main step in the proof is to modify $G$ in order  to obtain a 
suitable connected transitive Lie group $\hat G$ with particular nice properties \cite[Theorem 2.1]{DN}. 
By a modification procedure on the level of Lie algebras (as is described in \cite[\S 2.4]{DN}),  
we obtain from the K\"ahler Lie algebra $\g$ of $G$ a quasi-normal 
K\"ahler Lie algebra $\hat \g$. Moreover, it is shown that there exists a connected subgroup  
$\hat G \leq \Isom_{\! h}(W)$,  which has Lie algebra $\hat \g$ and acts transitively on $W$. 
As can be verified directly from  \cite[\S 2.4]{DN},  
the modified Lie algebra $\hat \g$ preserves unimodularity of $\g$ and also satisfies $\dim \hat \g \leq \dim \g$. 

Therefore, from the beginning,  we may assume that the connected unimodular transitive Lie group
 $G$ of holomorphic isometries in question has quasi-normal Lie algebra $\g$. 
We can also replace $G$ with its universal covering group, and we 
remark that \emph{$K$ is connected}  ($W= G/K$  is simply connected, since we are assuming here that $W$ is contractible). 
With these additional properties in place, according to \cite[Theorem 2.5]{DN} combined with  \cite[ \S 7]{DN}, 
the following hold: 
\begin{enumerate} 
\item There exists a closed connected 
normal abelian subgroup $A$ of $ G$, such that $G = A H$ is an almost semi-direct product.
\item  There exists a reductive subgroup  $U \leq H$, with $K  \leq U$,  such that $$ D = H/U$$  is a bounded homogeneous domain and $$U/K$$  is compact with finite fundamental group. 
\item Put $L= A \, U$. Then $L$ is a closed subgroup of $G$ and the map 
\begin{equation*} \label{eq:fundamental}
W =  G/K \, \to \, G/L = H/U = D
\end{equation*}   is a holomorphic
fiber bundle with fiber $L/ K = A \, U/K$.
\end{enumerate}

\smallskip 
We prove now that, if $G$ is unimodular then  \emph{$H$ is a unimodular Lie group}: 
%
For this recall from \cite[Theorem 2.5]{DN} that $A$ is tangent to a 
K\"ahler ideal $\a$ of the K\"ahler algebra which belongs to $W$.  
 (Recall  that the K\"ahler algebra for  $G/K$ is $\g$ together with an 
 alternating two-form $\rho$ which is representing the K\"ahler form on  $W$.)   
Since $K$ intersects $A$ only trivially, the K\"ahler ideal  $\a$ is non-degenerate, that is, the restriction $\rho_{\a}$ of the 
K\"ahler form $\rho$ of $\g$ to $\a$ is non-degenerate. Since $\a$ is abelian 
and $\rho$ is a closed form on $\g$, it follows that $\rho_{\a}$ is invariant by the restriction of 
the adjoint representation of $\h$ (respectively $H$). In particular, this restricted  representation of $H$ on $\a$ is by unimodular maps. Since $G$ is unimodular, it  follows from the semi-direct product  decomposition $G = A H$  that $H$ is unimodular. 

\smallskip
Let $K_{1}$ denote the maximal  compact normal  subgroup of $H$. Then
the group $H' = H / K_{1}$ is unimodular. Moreover $H'$ acts faithfully and transitively on 
$D = H/K =H'/ K'$,  $K' = K/K_{1}$. 
Hence, the bounded domain $D$ has a transitive faithful unimodular group $H'$ of isometries.   
By results  of Hano \cite[Theorem III, IV]{jh}, $H'$ must be semisimple and 
\emph{$D$ is a symmetric bounded domain}.
We also conclude that there exists a semisimple subgroup $S \leq  G$, which is  of non-compact type, 
such that $H = K_{1} S$ is an almost direct product and the homomorphism  $S \to H'$ is a covering with  finite kernel.

\smallskip 
Contractibility of $W$ further implies $U= K$. Therefore, in this case,  the holomorphic bundle in \eqref{eq:fundamental} is of 
the form $$  G/K \to D = H/K \; , $$ 
with fiber $A =\CC^{k}$, and $D$ is
a symmetric bounded domain.

\smallskip Finally the direct product decomposition follows: 
Note also that $K_{1}$ acts faithfully on $\CC^k$ by K\"ahler isometries, and that $S$ acts 
trivially on $A$, since it is of non-compact type. It follows that $S$ is a normal subgroup of 
$G$. Therefore its tangent algebra  must be orthogonal to $A$ with respect to $\rho$.  
Since the K\"ahler algebra $\g$ belonging to $G$ is describing $W$, 
we conclude that there is an orthogonal product decomposition $W = \CC^k \times D $. 
\end{proof}

We also obtain: 
\begin{corollary} Suppose that $W$ is a contractible homogeneous K\"ahler manifold, and that there exists
a discrete uniform subgroup in $\Isom_{\! h}(W)$. Then $W$ is K\"ahler isometric to\/ 
$ \CC^k \times D$, where $D$ is a symmetric bounded domain. 
\end{corollary}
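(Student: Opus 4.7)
The plan is to produce a connected unimodular subgroup of $\Isom_{\! h}(W)$ acting transitively on $W$, and then invoke Proposition \ref{pro:hom_Kaehler}. The natural candidate is $G = \Isom_{\! h}(W)^0$, the identity component of the full holomorphic isometry group.

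Two properties of $G$ must be verified. Transitivity is essentially automatic: by hypothesis $W$ is homogeneous as a K\"ahler manifold, so some connected Lie subgroup of $\Isom_{\! h}(W)$ acts transitively on $W$, and any such subgroup is contained in $G$, so $G$ acts transitively too. Equivalently, since $W$ is connected, the $G$-orbits in $W$ are open and thus $W$ is a single $G$-orbit. Unimodularity of $G$ uses the hypothesis on $\Gamma$: a classical theorem of Weil asserts that a locally compact group admitting a discrete cocompact subgroup is unimodular, so $\Isom_{\! h}(W)$ itself is unimodular. Since the modular function of a connected Lie group is $g \mapsto |\det \mathrm{Ad}(g)|^{-1}$ and depends only on the adjoint representation on the Lie algebra, its restriction to $G$ agrees with the modular function of $G$; hence $G$ is unimodular as well.

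With these two properties in place, Proposition \ref{pro:hom_Kaehler} applies to $G$ and yields the K\"ahler direct product decomposition $W = \CC^k \times D$ with $D$ a symmetric bounded domain. There is no substantive obstacle: the corollary is a routine reduction to the preceding proposition, the only external input being the classical unimodularity of locally compact groups admitting a uniform lattice.
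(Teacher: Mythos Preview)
Your proposal is correct and follows precisely the approach the paper has in mind: the corollary is stated without proof, as an immediate consequence of Proposition~\ref{pro:hom_Kaehler}, and your reduction—pass to $G=\Isom_{\! h}(W)^0$, verify transitivity from connectedness of $W$, and deduce unimodularity from the existence of a uniform lattice (cf.\ \cite[1.9 Remark]{ra}, as the paper cites elsewhere)—is exactly the intended one. Your observation that unimodularity passes to the open subgroup $G$ is the only point requiring a word, and you handle it correctly.
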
 

The following is obtained in  the proof of Proposition \ref{pro:hom_Kaehler}: 
\begin{corollary}  \label{cor:hom_Kaehler}
Assume that $W$ is a homogeneous K\"ahler manifold which admits a transitive unimodular group $G$.
 Then there exists a symmetric  bounded domain $D$ such that
 $W$ is a holomorphic fiber bundle over $D$ with fiber the product of a flat space $\CC^k$ with 
a compact simply connected homogeneous K\"ahler manifold.
Moreover, $\Isom_{\! h}(W)^{0}$ contains a covering group of the identity component of the
holomorphic isometry group of $D$.
\end{corollary}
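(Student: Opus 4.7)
The plan is to adapt the proof of Proposition~\ref{pro:hom_Kaehler}, dispensing with the contractibility assumption. First, I would invoke the Dorfmeister--Nakajima fundamental conjecture \cite{DN} directly on the homogeneous K\"ahler manifold $W$: even without any unimodularity hypothesis, this produces a holomorphic fiber bundle of $W$ over a bounded homogeneous domain $D$, with fiber the direct product of a flat space $\CC^k$ and a compact simply connected homogeneous K\"ahler manifold. Thus the only work is to upgrade ``bounded homogeneous'' to ``bounded symmetric'' using the hypothesis that some transitive subgroup $G \leq \Isom_{\! h}(W)$ is unimodular, and to locate a covering of $\Isom(D)^{0}$ inside $\Isom_{\! h}(W)^{0}$.

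For the first task one runs the structural part of the proof of Proposition~\ref{pro:hom_Kaehler} verbatim. After passing to the quasi-normal Dorfmeister--Nakajima modification $\hat G$ (which preserves unimodularity by the calculation cited from \cite[\S 2.4]{DN}), one obtains the decomposition $\hat G = A H$ with abelian K\"ahler ideal $A$ and a reductive subgroup $U$ satisfying $K \leq U \leq H$ and $D = H/U$. Since the restriction $\rho_{\a}$ of the K\"ahler form to the abelian ideal $\a$ is non-degenerate and $\mathrm{ad}(\h)$-invariant (by closedness of $\rho$ and $[\h,\a] \subseteq \a$), the restricted adjoint action of $H$ on $\a$ lies in $\mathrm{Sp}(\rho_{\a})$ and is therefore unimodular; combined with the almost semi-direct product $\hat G = A H$ and unimodularity of $\hat G$, this forces $H$ itself to be unimodular. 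Dividing out the maximal compact normal subgroup $K_{1}$ of $H$ yields a faithful unimodular transitive action of $H' = H/K_{1}$ on $D$, and Hano's theorems \cite[Thm.~III, IV]{jh} then imply that $H'$ is semisimple of non-compact type and that $D$ is a symmetric bounded domain.

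For the covering statement, let $S \leq H$ be a Levi subgroup so that $S \to H'$ is a finite covering. Because $S$ is connected and $S \leq \hat G \leq \Isom_{\! h}(W)$, we have $S \leq \Isom_{\! h}(W)^{0}$. A standard uniqueness result for transitive connected subgroups of isometry groups of Riemannian symmetric spaces of non-compact type identifies the connected semisimple transitive subgroup $H'$ with the full $\Isom(D)^{0}$, so that $S \leq \Isom_{\! h}(W)^{0}$ is indeed a covering of $\Isom(D)^{0}$. The main obstacle in the argument is the descent of unimodularity from $\hat G$ to $H$; this is precisely the step where the abelian K\"ahler ideal $A$ is indispensable, and it is the same juncture at which the unimodular hypothesis enters Proposition~\ref{pro:hom_Kaehler}. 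All other verifications---the holomorphic bundle structure, and the identification of the semisimple transitive subgroup with $\Isom(D)^{0}$---are either direct citations of \cite{DN} or standard facts about symmetric spaces of non-compact type.
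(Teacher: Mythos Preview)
Your proposal is correct and follows essentially the same approach as the paper: both proofs simply observe that the argument of Proposition~\ref{pro:hom_Kaehler} already establishes, without using contractibility, that the Dorfmeister--Nakajima base $D$ is symmetric (via the unimodularity-of-$H$ step and Hano's theorems) and that the modified group $\hat G$ contains a semisimple subgroup $S$ of non-compact type covering the transitive group $H'$ on $D$. The paper's own proof is terser---it just points back to the proof of Proposition~\ref{pro:hom_Kaehler} and asserts that $S$ covers $\Isom_{\! h}(D)^{0}$---whereas you spell out the identification $H' = \Isom(D)^{0}$ as a separate standard fact about semisimple transitive isometry groups on symmetric spaces of non-compact type; but the substance is the same.
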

\begin{proof} 
In fact, in the proof of Proposition \ref{pro:hom_Kaehler} it is established that $D$ is symmetric with
a semisimple transitive group $S$ contained in the quasi normal modification $\bar G$ of $G$.  
It is also clear that $S$ is normal in $\bar G$, and it is the maximal  semisimple 
subgroup of non-compact type in $\bar G$ (in fact, in $\Isom_{\! h}(W)^{0}$), and $S$ is 
covering $\Isom_{\! h}(D)^{0}$. 
\end{proof}

We recall that any symmetric bounded domain $D$ admits an involutive anti-holomorphic isometry: 

\begin{pro}[Isometry group of symmetric bounded domain] \label{order2isometry}
Let $D$ be a symmetric  bounded domain with K\"ahler structure $(\Omega, J)$.
If $D$ is \emph{irreducible} then $$   \Isom(D) =  \Isom_{\! h}^{\pm}(D)  \; . $$
Moreover, for any $D$ there exists an element $\bar \tau\in \Isom(D)$ 
such that
\[
\bar \tau^2=1, \ \bar \tau^*\Omega=-\Omega,\ \bar \tau_*J=-J \bar \tau_*.\]
\end{pro}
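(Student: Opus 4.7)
\emph{Plan.} The statement splits into two parts: (i) $\Isom(D)=\Isom_{\! h}^{\pm}(D)$ in the irreducible case, and (ii) the existence of $\bar\tau$ on an arbitrary symmetric bounded domain.

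For (i), I would argue by parallel transport. Since $(D,\Omega,J)$ is K\"ahler, the tensor $J$ is parallel for the Levi-Civita connection. For any $f\in\Isom(D)$ the pulled-back tensor $f^{*}J$ is therefore also a parallel Hermitian almost complex structure, and at a basepoint $o$ its value $(f^{*}J)_{o}$ commutes with the linear holonomy at $o$. For a symmetric space this holonomy equals the isotropy representation of (the identity component of) $K$ on $T_{o}D\cong\p$, and irreducibility of $D$ makes this representation irreducible over $\RR$. By Schur's lemma its real commutant is a real division algebra containing $J_{o}$, hence equal to $\CC=\RR\oplus\RR J_{o}$; the only elements squaring to $-\mathrm{id}$ in this $\CC$ are $\pm J_{o}$. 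Parallelism then propagates the pointwise equality $(f^{*}J)_{o}=\pm J_{o}$ to $f^{*}J=\pm J$ globally, so $f\in\Isom_{\! h}^{\pm}(D)$.

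For (ii), I would first use the K\"ahler de Rham decomposition to write $D=D_{1}\times\cdots\times D_{r}$ as a product of irreducible symmetric bounded domains, reducing the task to the construction of an involutive anti-holomorphic isometry $\bar\tau_{i}$ on each irreducible factor $D_{i}=G_{i}/K_{i}$; the product $\bar\tau=\bar\tau_{1}\times\cdots\times\bar\tau_{r}$ then satisfies the three required identities on $D$. On each irreducible factor the centre $\z(\k_{i})$ is one-dimensional, spanned by some $Z_{0}$ with $\mathrm{ad}(Z_{0})|_{\p_{i}}=J_{o_{i}}$, and the problem reduces to producing an involutive Lie algebra automorphism $\sigma$ of $\g_{i}$ with $\sigma(\k_{i})=\k_{i}$ and $\sigma(Z_{0})=-Z_{0}$: for the classical Cartan types I--IV this is entrywise matrix complex conjugation in the standard realization of $G_{i}$, and for the exceptional types EIII and EVII it is extracted from the corresponding real form structure. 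Lifting $\sigma$ to a group involution and descending to $D_{i}$ yields a fixed-point isometry $\bar\tau_{i}$ at $o_{i}$ with $(\bar\tau_{i})_{*}=\sigma|_{\p_{i}}$, and the identity
\[
(\bar\tau_{i})_{*}\bigl(J_{o_{i}}v\bigr)=\sigma([Z_{0},v])=[\sigma(Z_{0}),\sigma(v)]=-[Z_{0},(\bar\tau_{i})_{*}v]=-J_{o_{i}}(\bar\tau_{i})_{*}v
\]
gives anti-holomorphicity at $o_{i}$; parallel transport (or equivalently the symmetric-space structure) spreads this over $D_{i}$, and $\bar\tau_{i}^{*}\Omega_{i}=-\Omega_{i}$ together with $\bar\tau_{i}^{2}=\mathrm{id}$ follow from $\Omega_{i}=g_{i}(J\cdot,\cdot)$ and $\sigma^{2}=\mathrm{id}$.

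The real obstacle is the production of $\sigma$ in part (ii). For the classical types this is a one-line matrix computation, but for EIII and EVII it must be unpacked from the classification of Hermitian real forms of $\mathfrak{e}_{6}$ and $\mathfrak{e}_{7}$. A more conceptual packaging—which I would prefer to cite if available—is Cartan's classical result that every irreducible Hermitian symmetric space of noncompact type admits a totally real, totally geodesic submanifold of half real dimension, whose associated geodesic reflection is exactly the required $\bar\tau_{i}$.
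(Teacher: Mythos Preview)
Your proposal is correct, and for part (ii) it follows a genuinely different route from the paper.

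For part (i) the paper simply cites \cite[Ch.\ VIII, Ex.\ B4]{Helgason}; you supply the standard holonomy/Schur argument behind that citation. One small gap: after Schur's lemma you write ``hence equal to $\CC=\RR\oplus\RR J_{o}$'', but a priori the real commutant of an irreducible real representation containing an element squaring to $-\mathrm{id}$ could also be $\HH$. You should add a line observing that the centre $Z(K)^{0}\cong U(1)$ acts on the complex-irreducible summands $\p^{+},\p^{-}$ of $\p\otimes\CC$ by inverse nontrivial characters, so these are non-isomorphic; hence $\p$ is of complex (not quaternionic) type and the commutant is exactly $\CC$.

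For part (ii) the paper proceeds quite differently. Rather than decomposing $D$ into irreducible factors and constructing on each a Lie algebra involution $\sigma$ with $\sigma(Z_{0})=-Z_{0}$, it proves the more general statement that \emph{any} simply connected K\"ahler manifold with real-analytic metric admits an involutive anti-holomorphic isometry: in local holomorphic coordinates the conjugation $\tau_{0}(z)=\bar z$ is a local anti-holomorphic isometry, analytic continuation (available by simple connectivity and analyticity of the metric) extends it to a global map $\bar\tau$, and $\bar\tau^{2}=\mathrm{id}$ follows from rigidity of analytic maps. This sidesteps entirely the case analysis you flag for EIII and EVII. Your Lie-theoretic approach, by contrast, is more structural and makes the group-theoretic origin of $\bar\tau$ explicit---it descends from an automorphism of $G$---which is conceptually satisfying and ties in well with how $\bar\tau$ is used later in the paper; the paper's analytic argument is uniform and applies well beyond symmetric domains, but is less constructive and says nothing about how $\bar\tau$ interacts with the symmetry group.
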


For the fact that every isometry of an irreducible bounded symmetric domain is either holomorphic or anti-holomorphic, see e.g.\  \cite[Ch.VIII Ex.B4]{Helgason}. 
For the existence of the anti-holomorphic involution $\bar \tau$, recall first 
that the metric on any symmetric bounded domain $D$ is analytic (see  \cite{Helgason}).
Then the following holds:

\begin{pro} Let\/  $W$ be a simply connected K\"ahler manifold with analytic K\"ahler metric. 
Then there exists an anti-holomorphic involutive isometry $\bar \tau$ of $W$. 
\end{pro}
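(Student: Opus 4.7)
The plan is two-fold: first produce an anti-holomorphic isometric involution on a neighborhood of some base point $p_0 \in W$, then propagate it globally using real-analyticity together with simple-connectivity. Pick $p_0 \in W$ and a unitary frame at $p_0$, and let $\sigma_0 : T_{p_0} W \to T_{p_0} W$ denote complex conjugation in this frame. Then $\sigma_0$ is an $\RR$-linear orthogonal involution that anti-commutes with $J_{p_0}$. Transport $\sigma_0$ by the Riemannian exponential map, setting
\[
\bar\tau_0 \; = \; \exp_{p_0} \circ \, \sigma_0 \, \circ \exp_{p_0}^{-1}
\]
on a normal ball around $p_0$. By construction $\bar\tau_0$ is smooth, involutive, and its differential at $p_0$ is the anti-holomorphic orthogonal transformation $\sigma_0$; in the intended application (a symmetric bounded domain $D$), $\sigma_0$ also commutes with the value of the curvature tensor at $p_0$, so $\bar\tau_0$ is a genuine local isometry on a normal neighborhood.

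For the globalization step I would appeal to the classical fact that on a simply connected real-analytic Riemannian manifold, any local isometry extends uniquely to a global isometry. Concretely, one continues the germ of $\bar\tau_0$ analytically along each path $\gamma$ issuing from $p_0$: at every point along $\gamma$ a new local isometry germ is obtained via normal coordinates using that the metric is real-analytic, and the Riemannian identity-theorem for analytic isometries guarantees uniqueness of continuation. Simple-connectedness of $W$ together with the monodromy principle then ensures that the germ obtained at an endpoint is independent of the chosen path, producing a globally defined smooth map $\bar\tau: W \to W$ that is a local, hence global, isometry.

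It remains to check that $\bar\tau$ is still an anti-holomorphic involution. Both the identity $\bar\tau \circ \bar\tau = \mathrm{id}_W$ and the compatibility $d\bar\tau \circ J = - J \circ d\bar\tau$ hold on the original normal neighborhood of $p_0$ by the construction of $\bar\tau_0$. Each of these is an equation between real-analytic tensors on the connected real-analytic manifold $W$, so by the identity principle for analytic maps both persist on all of $W$. The main obstacle in the argument is Stage~1: passing from the linear involution $\sigma_0$ on $T_{p_0} W$ to an honest \emph{isometric} involution $\bar\tau_0$ of a neighborhood. This requires that $\sigma_0$ preserve the curvature tensor at $p_0$, which fails for a generic K\"ahler metric but is automatic in the setting for which the proposition is being invoked, namely symmetric bounded domains, where the Cartan involution at $p_0$ furnishes exactly such a $\sigma_0$. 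Once this local step is in place, real-analyticity of the K\"ahler metric together with $\pi_1(W) = 1$ does the rest of the work.
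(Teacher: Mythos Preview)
Your global strategy—manufacture a local anti-holomorphic isometric involution near a base point and then propagate it by analytic continuation using simple-connectedness—is exactly the paper's approach. The difference lies only in the local seed: the paper takes complex conjugation $\tau_0(z)=(\bar z_1,\dots,\bar z_n)$ in a holomorphic coordinate chart, whereas you transport a linear conjugation $\sigma_0$ on $T_{p_0}W$ via the Riemannian exponential map.

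The gap you flag is genuine, and it is not an artifact of your particular construction. For $\bar\tau_0=\exp_{p_0}\circ\sigma_0\circ\exp_{p_0}^{-1}$ to be an isometry one needs $\sigma_0$ to preserve the curvature tensor at $p_0$; for the paper's $\tau_0(z)=\bar z$ to be an isometry one needs $g_{\alpha\bar\beta}(\bar z)=\overline{g_{\alpha\bar\beta}(z)}$ in the chosen holomorphic chart. Neither condition is automatic for an arbitrary real-analytic K\"ahler metric, and in fact the proposition as stated is too strong: a generic real-analytic K\"ahler metric on~$\CC$ has no nontrivial isometries at all, holomorphic or otherwise (one can arrange that the Gaussian curvature has a unique critical point with no reflective symmetry in its Taylor expansion). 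So the paper's assertion that ``$\tau_0$ defines a local anti-holomorphic isometry of $W$'' is unjustified in the generality claimed, and your hesitation about Stage~1 is exactly on target.

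Both arguments become valid in the only case the proposition is actually invoked, namely $W=\CC^k\times D$ with $D$ a bounded symmetric domain. There the curvature tensor at any point is invariant under complex conjugation in a unitary frame (this is part of the symmetric-space structure: the geodesic symmetry and the Cartan involution supply exactly the needed $\sigma_0$), so your exponential-map construction, the paper's coordinate construction, and the subsequent analytic continuation all go through. Your write-up is therefore correct for the intended application; just be aware that the proposition's hypotheses, as stated, are insufficient.
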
 
\begin{proof} Since $W$ is a complex manifold and the K\"ahler metric is Hermitian with respect to the complex structure, there exists local complex coordinates for $W$ such that  the metric can be written  as
$$ \displaystyle g_0=2\sum_{\al,\be}g_{\al\bar\be}dz^\al d\bar z^{\be} \; , $$ where
$g_{\al\bar\be}$ is a Hermitian matrix, so that $g_{\al\bar\be}= \overline{g_{\be\bar\al}}$.
In particular, the K\"ahler form  $\Omega_0$ is obtained as $\displaystyle
\Omega_0=-2i\sum_{\al,\be}g_{\al\bar\be}dz^\al\wedge d \bar z^{\be}$. 

Let $\tau_0:\CC^n\ra \CC^n$ be the complex conjugation map, that is, 
\[\tau_0(z_1,z_2,\ldots,z_n)=(\bar z_1,\bar z_2,\ldots,\bar z_n).\]
Then $\tau_0$ satisfies 
$\displaystyle \tau_0^*\Omega_0=-\Omega_0,\  \tau_{0*}J_\CC=-J_\CC\tau_{0*}$.
In particular, $\tau_{0}$ defines a local anti-holomorphic isometry of $W$. 

Since $W$ is simply connected, we may use analytic continuation to extend $\tau_{0}$ to 
an analytic map $\bar \tau: W \to W$. By the analyticity assumptions, $\bar \tau$ is an anti-holomorphic 
map and it is preserving the K\"ahler metric. Also it follows $\bar \tau^{2} = {\rm id}_{W}$ by the local rigidity of analytic maps.  Therefore,  $\bar \tau$
is an involutive isometry of $W$.   
\end{proof}

\begin{remark} Note that  the holomorphic isometry group  $\Isom_{\! h}(D)$ has finitely many connected components. Interestingly,  even if $D$ is irreducible $\Isom_{\! h}(D)$ is not necessarily connected 
\cite[Ch. X, Ex. 8]{Helgason}. 
\end{remark}

\section{Locally homogeneous aspherical Sasaki manifolds} 
\label{sec:lchom_sas}

In this section $X$ denotes a regular contractible Sasaki manifold. 

\subsection{Homogeneous Sasaki manifolds for  unimodular groups}
Since $X$ is  regular with Reeb flow isomorphic to the real line, 
Proposition  \ref{pr:noncompact} implies that  the Reeb fibering 
$$\RR \to  X \stackrel{q}{\lra} W$$  
gives rise to  an  exact sequence of groups 
\begin{equation} \label{eq:Sasaki_fib_groups}
 \displaystyle 1\to \RR \to \Psh(X)\stackrel{\phi}\lra\Isom_{\! h}(W)\to  1  \; , 
 \end{equation} 
where  $W$ is the K\"ahler quotient of $X$.

\begin{pro}\label{prop:Sasaki_homuni}
Let $X$ be a homogeneous Sasaki manifold such that its K\"ahler quotient $W$ 
is  contractible.   
Suppose further that 
$X$ admits a connected transitive unimodular 
subgroup $$ G  \, \leq \, \Psh(X) \; . $$  Then the following hold: 
\begin{enumerate}
\item  $W = \displaystyle \CC^k\times  D$   is the K\"ahler product of a flat space with
a  symmetric bounded domain $D$. 
\item  If the Reeb flow of $X$ is isomorphic to $\RR$,  then the 
pullback of $$ \CC^k\rtimes {\rm U}(k) \, \leq  \, \Isom_{\! h}(W)$$   
along the exact sequence  \eqref{eq:Sasaki_fib_groups} 
is a normal subgroup  $$ \cN\rtimes {\rm U}(k) \,  \leq \, \Psh(X) \; ,  $$  
where $\cN$ is a $2k+1$-dimensional Heisenberg Lie group.
\end{enumerate} 
\end{pro}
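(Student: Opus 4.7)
\emph{Part (1).} The Reeb flow $T \cong \RR$ is central in $\Psh(X)$, so $T \cap G$ is a closed central subgroup of the connected group $G$. Since unimodularity passes to quotients by central subgroups (the adjoint representation acts trivially on a central ideal), $\phi(G) = G/(T \cap G)$ is a connected unimodular Lie group. Because $G$ acts transitively on $X$, the image $\phi(G)$ acts transitively on the K\"ahler quotient $W$, which is contractible by hypothesis. Proposition \ref{pro:hom_Kaehler} then yields a K\"ahler direct product decomposition $W = \CC^{k}\times D$ with $D$ a symmetric bounded domain.

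\emph{Part (2), reduction to a central extension.} Since a symmetric bounded domain has no Euclidean de~Rham factor, the splitting $W=\CC^{k}\times D$ is the de~Rham decomposition of $W$. Hence every holomorphic isometry preserves each factor and
\[
\Isom_{\! h}(W) \;=\; \Isom_{\! h}(\CC^{k}) \times \Isom_{\! h}(D) \;=\; \bigl(\CC^{k}\rtimes \mathrm{U}(k)\bigr) \times \Isom_{\! h}(D),
\]
so $\CC^{k}\rtimes \mathrm{U}(k)$ is normal in $\Isom_{\! h}(W)$. Since $W$ is contractible, $H^{1}(W)=0$ and Proposition \ref{pr:noncompact} gives the exact sequence \eqref{eq:Sasaki_fib_groups}. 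Pulling back along the inclusion yields a central extension
\[
1 \to \RR \to P \to \CC^{k}\rtimes \mathrm{U}(k) \to 1, \qquad P := \phi^{-1}\bigl(\CC^{k}\rtimes \mathrm{U}(k)\bigr),
\]
with $P$ connected (since $\phi$ has connected kernel $\RR$).

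\emph{The main step: identifying $\phi^{-1}(\CC^{k})$ as the Heisenberg group.} This is the crux of the argument. The subgroup $N := \phi^{-1}(\CC^{k})$ is a simply connected $(2k{+}1)$-dimensional Lie group arising as a central extension of $\CC^{k}$ by $\RR$, hence is determined by its Lie bracket. For any pair of translation-invariant vector fields $X,Y$ on the $\CC^{k}$-factor of $W$, the horizontal lifts $\tilde X,\tilde Y$ satisfy $\om(\tilde X)=\om(\tilde Y)=0$, and from $d\om=q^{*}\Omega$ (see \eqref{eq:dexact}) one computes
\[
\om\bigl([\tilde X,\tilde Y]\bigr) \;=\; -\,d\om(\tilde X,\tilde Y) \;=\; -\,\Omega(X,Y),
\]
which is the standard symplectic form on $\CC^{k}$. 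This exactly encodes the bracket of the Heisenberg Lie algebra, and so $N \cong \cN$.

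\emph{Semidirect structure and normality.} Using Lemma \ref{lem:iso_lift}, the compact subgroup $\mathrm{U}(k)\leq \Isom_{\! h}(W)$ fixes the origin of the $\CC^{k}$-factor and therefore admits a unique lift to $\Psh(X)$ fixing a chosen point $x_{0}$ in the fibre over that origin; by the uniqueness this lift is a group homomorphism, giving a splitting of the extension restricted to $\mathrm{U}(k)$. The lifted $\mathrm{U}(k)$ preserves the Reeb field and the connection form $\om$, hence normalises $\cN$ and acts on $\cN/\RR\cong\CC^{k}$ as the standard unitary action, producing $P=\cN\rtimes \mathrm{U}(k)$. Finally, normality of $P$ in $\Psh(X)$ is immediate from the normality of $\CC^{k}\rtimes\mathrm{U}(k)$ in $\Isom_{\! h}(W)$ together with the centrality of $T=\RR\subseteq P$ in $\Psh(X)$. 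The main obstacle is the bracket computation above; everything else is formal once $W=\CC^{k}\times D$ is known.
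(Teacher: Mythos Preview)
Your Part~(1) is correct and matches the paper's argument: push $G$ down via $\phi$, note that the image is unimodular and transitive on the contractible $W$, and invoke Proposition~\ref{pro:hom_Kaehler}.

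In Part~(2) there is a genuine gap in your ``main step''. You compute the bracket of \emph{horizontal lifts} $\tilde X,\tilde Y$ of translation fields and obtain $\om([\tilde X,\tilde Y])=-\Omega(X,Y)$. That is a correct curvature computation, but it does not compute the Lie bracket in $\mathfrak n=\mathrm{Lie}(N)$. The Lie algebra $\mathfrak n$ consists of the \emph{fundamental vector fields} for the $N$-action on $X$; these are pseudo-Hermitian Killing fields satisfying $L_{\hat X}\om=0$, and they are \emph{not} horizontal. Indeed, writing $\hat X=\tilde X+f_X\,\cA$, the condition $L_{\hat X}\om=0$ forces $df_X=-\iota_{\tilde X}\,d\om$, so $f_X$ is a nonconstant function whenever $X\neq 0$. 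Thus the bracket $[\tilde X,\tilde Y]$ of horizontal lifts is \emph{not} the bracket in $\mathfrak n$, and your sentence ``this exactly encodes the bracket of the Heisenberg Lie algebra'' does not follow from what you wrote.

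The paper closes this gap with Lemma~\ref{lem:Lie_Hermitian}: for pseudo-Hermitian Killing fields $\cX,\cY$ one has
\[
d\om(\cX,\cY)=\om([\cX,\cY]),
\]
proved from $L_{\cX}\om=0$ and the Cartan formula. Applying this to $\hat X,\hat Y\in\mathfrak n$ (whose bracket lies in $\RR\cA$ since $\CC^k$ is abelian) gives $\om([\hat X,\hat Y])=q^*\Omega(\hat X,\hat Y)=\Omega(X,Y)$, so the extension cocycle is the nondegenerate symplectic form and $\mathfrak n$ is Heisenberg. Your horizontal-lift computation can be salvaged by carrying out the correction terms $\tilde X(f_Y)-\tilde Y(f_X)$ explicitly (they contribute $2\Omega(X,Y)$, flipping the sign), but as written the argument is incomplete. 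The remaining points (de~Rham splitting of $\Isom_{\!h}(W)$, splitting over $\mathrm U(k)$, normality of $P$) are fine and in fact more detailed than the paper's treatment.
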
 

\begin{proof} 
As the Reeb flow $T$ is central in $\Psh(X)^0$, the associated  Boothby-Wang homomorphism $\phi$ as in 
\eqref{eq:Sasaki_fib_groups} maps the unimodular group $G$ to 
$$ \bar G = \phi(G) \, \leq \, \Isom_{\! h}(W) \, . $$ 
Since also $\bar G$ is  unimodular and  transitive on the contractible K\"ahler manifold $W$,  
Proposition \ref{pro:hom_Kaehler} states  that  $W = \CC^k \times D$,
 where $D$ is a symmetric bounded domain. This proves (1).
It also follows that  
$$  \Isom_{\! h}(W) =  \left(  \CC^k\rtimes {\rm U}(k) \right) \times \Isom_{\! h}(D) \;. $$ 

We may thus pull  back the factor  $\CC^k\rtimes {\rm U}(k)$ by $\phi$ 
in the exact sequence \eqref{eq:Sasaki_fib_groups}. As pullback  we obtain the  subgroup  
$ \cN\rtimes {\rm U}(k) \, \leq \,  \Psh(X)$, 
where $\cN$ is the preimage of  the translation group $ \CC^k$. 

\smallskip 
 Assuming $T = \RR$, we note that $\cN$ is  a central extension of the Reeb flow 
$\RR$ by the abelian Lie group $\CC^{k}$. We prove now that $\cN$ is a 
$2k+1$-dimensional Heisenberg Lie group by showing that its Lie algebra $\n$ 
has one-dimensional center: 
Since $\cN$ acts faithfully as a transformation group on $X$,
we may identify $\n$ with a subalgebra of pseudo-Hermitian Killing vector fields on $X$. This subalgebra  contains the Reeb field $\cA$ (tangent to the central one-parameter group $T =\RR$) in its center. 
Now, since $\cN$ is the pullback of  $\CC^k$, given any two vector fields $\cX,\cY \in \n$, 
we have $$ [ \cX,\cY] =   \om([ \cX,\cY]) \cA \; . $$ 
 Using Lemma \ref{lem:Lie_Hermitian} below, we observe   $$ q^* \Omega \left(\cX,\cY\right) = 
 d\omega(\cX,\cY)=   \; \om([ \cX,\cY]) \; . $$
Since  $(\CC^{k}, \Omega)$ is K\"ahler, it follows that $d\omega$
defines a non-degenerate two-form on $\n/ \langle \cA \rangle $.  
This shows that the Lie algebra $\n$ has one-dimensional center $\cA$. Therefore the Lie group $\cN$ has one-dimensional center. So $\cN$ is a Heisenberg group of dimension $2k +1$. 
\end{proof}

A vector field on $\cX$ with flow in $\Psh(X)$ will be called a \emph{pseudo-Hermitian vector field}.
The set of pseudo-Hermitian vector fields forms a subalgebra of the Lie algebra of Killing vector fields for the Sasaki metric $g$. 

\begin{lemma} \label{lem:Lie_Hermitian}
Let $\cX, \cY$ be any two pseudo-Hermitian Killing vector fields on the Sasaki manifold $X$. Then 
$$ q^* \Omega \left(\cX,\cY\right) = 
 d\omega(\cX,\cY) = \om([\cX,\cY]) \; . $$
\end{lemma}
\begin{proof} Since the flow of $\cX$ preserves the contact form $\om$, we have  $$L_{\cX} \, \om = 0 \, . $$
(Here, $L_{\cX}$ denotes the Lie derivative with respect to $\cX$.)
That is,  $$ L_{\cX} \, \om(\cZ) - \om([\cX, \cZ]) = 0 , $$ for all vector fields $\cZ$ on $X$.  We compute 
\begin{eqnarray*}  0 & = &  L_{\cX} \,  \om(\cY) - \om([\cX, \cY]) - L_{\cY}\,  \om(\cX) + \om([\cY, \cX]) \\
& = &  L_{\cX} \, \om(\cY) - L_{\cY} \, \om(\cX)  - \om([\cX, \cY])  + \om([\cY, \cX])  \\
& =  & d\omega(\cX,\cY) -  \om([\cX, \cY])  \; .  \end{eqnarray*}    \qedhere  
\end{proof}

Let $X$ be a contractible homogeneous Sasaki manifold with K\"ahler quotient $  W = \CC^{k} \times D$,  where $D$ is a symmetric bounded domain. 
Then 
\begin{equation}\label{Kdecomp}
\Isom_{\! h}(W) = \left(\CC^k\rtimes {\rm U}(k)\right) \times \Isom_{\! h}(D) \; . 
\end{equation}
Note further that $\Isom_{\! h}(D)^{0} = S_{0}$ is the  
identity component of the holomorphic isometry group of a Hermitian symmetric space 
$$D = S_0/H_0$$  of non-compact type.  
In particular, $S_{0}$ is semisimple of non-compact type \cite[Ch.VIII \S7]{Helgason}
and without center.
Therefore  \eqref{Kdecomp} also gives: 
\begin{pro}\label{pro:psh_connected} 
$\Psh(X)$ has finitely many connected components and $$ \Psh(X)/\Psh(X)^0 = \Isom_{\! h}(D)/\Isom_{\! h}(D)^{0} \; . $$
\end{pro}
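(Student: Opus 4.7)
The plan is to read off both statements directly from the Boothby--Wang exact sequence of Proposition~\ref{pr:noncompact}, which in the present situation (where the Reeb flow equals $\RR$) takes the form
\begin{equation*}
1 \to \RR \to \Psh(X) \xrightarrow{\phi} \Isom_{\! h}(W) \to 1,
\end{equation*}
combined with the product decomposition \eqref{Kdecomp} of $\Isom_{\! h}(W)$.

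First I would argue that the identity component $\Psh(X)^{0}$ coincides with $\phi^{-1}\!\left(\Isom_{\! h}(W)^{0}\right)$. The inclusion $\subseteq$ is immediate from continuity of $\phi$. For the reverse inclusion, observe that $\phi^{-1}\!\left(\Isom_{\! h}(W)^{0}\right)$ is a Lie subgroup of $\Psh(X)$ which sits in a short exact sequence of Lie groups $1 \to \RR \to \phi^{-1}(\Isom_{\! h}(W)^{0}) \to \Isom_{\! h}(W)^{0} \to 1$ with connected kernel and connected quotient; the total space is therefore connected, hence contained in $\Psh(X)^{0}$. Consequently $\phi$ descends to an isomorphism
\begin{equation*}
\Psh(X)\big/\Psh(X)^{0} \;\xrightarrow{\;\sim\;}\; \Isom_{\! h}(W)\big/\Isom_{\! h}(W)^{0}.
\end{equation*}

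Next I would apply the decomposition \eqref{Kdecomp},
\begin{equation*}
\Isom_{\! h}(W) \;=\; \bigl(\CC^{k} \rtimes \mathrm{U}(k)\bigr) \times \Isom_{\! h}(D),
\end{equation*}
and note that the first factor is connected, since both $\CC^{k}$ and $\mathrm{U}(k)$ are. Hence projection onto the second factor induces an isomorphism between component groups
\begin{equation*}
\Isom_{\! h}(W)\big/\Isom_{\! h}(W)^{0} \;\cong\; \Isom_{\! h}(D)\big/\Isom_{\! h}(D)^{0}.
\end{equation*}
Combining this with the previous isomorphism yields the claimed identification. Finally, the group of holomorphic isometries $\Isom_{\! h}(D)$ of the bounded symmetric domain $D$ has finitely many connected components (as noted in the remark preceding the proposition, cf.\ \cite[Ch.~X, Ex.~8]{Helgason}), so the same holds for $\Psh(X)$.

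I do not anticipate a serious obstacle: the only point requiring care is the connectedness of $\phi^{-1}(\Isom_{\! h}(W)^{0})$, which is a routine extension-of-connected-by-connected argument for Lie groups (equivalently, one may note that the preimage is a Lie subgroup containing $\Psh(X)^{0}$ and fibered by $\RR$ over a connected base).
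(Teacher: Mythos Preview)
Your argument is correct and is precisely the reasoning the paper has in mind: in the text the proposition is stated as an immediate consequence of \eqref{Kdecomp} together with the exact sequence of Proposition~\ref{pr:noncompact}, and you have simply spelled out the two steps (connectedness of $\phi^{-1}(\Isom_{\! h}(W)^{0})$ via the extension by $\RR$, and connectedness of the Euclidean factor $\CC^{k}\rtimes\mathrm{U}(k)$). One minor remark: the finiteness of $\pi_{0}(\Isom_{\! h}(D))$ you cite is recorded in the paper at the end of Section~\ref{sect:homogeneous_k} rather than immediately before the proposition, but that does not affect the argument.
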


We add:

\begin{pro}[Sasaki automorphism group]  \label{pro:pshX_dec}
There exists a  semi\-sim\-ple Lie group $S$ of non compact type, 
whose center $\Lambda$ is  infinite cyclic, and a $2k +1$ dimensional 
Heisenberg groups $\cN$,  such that there is an almost direct product decomposition 
$$\Psh(X)^{0}  \;  = \, \left( \cN \rtimes {\rm U}(k) \right) \cdot S   \; .  $$ 
Moreover, the Reeb flow $T$ of $X$ is the center of  $\cN$ and $$T  \cap  S =   \left( \cN \rtimes {\rm U}(k) \right) \cap  S  = \Lambda \;  (\cong \ZZ)  . $$
\end{pro}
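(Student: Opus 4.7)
The plan is to build $S$ as a semisimple analytic subgroup of the Boothby--Wang preimage of the factor $S_{0}=\Isom_{\! h}(D)^{0}$. First, I consider $\tilde S=\phi^{-1}(S_{0})\cap \Psh(X)^{0}$, which by Proposition~\ref{pr:noncompact} fits in a central extension
\[
1\longrightarrow T\longrightarrow \tilde S\longrightarrow S_{0}\longrightarrow 1,\qquad T\cong\RR.
\]
At the Lie algebra level this is a one-dimensional trivial-module central extension of the semisimple Lie algebra $\s_{0}$. Whitehead's second lemma gives $H^{2}(\s_{0},\RR)=0$, so the extension splits; I fix a semisimple complement $\s\subseteq\tilde{\s}$ and let $S\le\tilde S$ be the corresponding connected subgroup. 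Since $\s\to\s_{0}$ is an isomorphism, the projection $S\to S_{0}$ is a covering homomorphism, so $S$ is a connected semisimple Lie group of non-compact type.

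With $S$ in place the structural assertions are almost formal. Proposition~\ref{prop:Sasaki_homuni} furnishes $\cN\rtimes\mathrm{U}(k)$ as the $\phi$-preimage of $\CC^{k}\rtimes\mathrm{U}(k)$, and by \eqref{CPsh} the Reeb flow $T$ is central in $\Psh(X)$, hence equals the one-dimensional center of $\cN$. Under $\phi$, the product $(\cN\rtimes\mathrm{U}(k))\cdot S$ surjects onto $\Isom_{\! h}(W)^{0}=(\CC^{k}\rtimes\mathrm{U}(k))\times S_{0}$, and since $T\subseteq\cN$ lies in the left factor this product equals $\phi^{-1}(\Isom_{\! h}(W)^{0})=\Psh(X)^{0}$. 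Because $S_{0}$ is centerless (adjoint group of a Hermitian symmetric space of non-compact type), $\ker(S\to S_{0})=Z(S)=S\cap T$. Furthermore, $(\cN\rtimes\mathrm{U}(k))\cap S$ projects to the trivial intersection $(\CC^{k}\rtimes\mathrm{U}(k))\cap S_{0}=\{1\}$ in $\Isom_{\! h}(W)^{0}$, so it lies in $T$ and thus equals $S\cap T$. Setting $\Lambda:=S\cap T$, this is a discrete subgroup of $T\cong\RR$, hence either trivial or infinite cyclic.

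The main obstacle is to rule out $\Lambda=\{1\}$. I would argue by contradiction: if $\Lambda=\{1\}$ then $S\xrightarrow{\sim} S_{0}$, and $S_{0}$ lifts to a subgroup of pseudo-Hermitian transformations of $X$. Fix $x_{0}\in q^{-1}(w_{0})$ for some $w_{0}\in D\subseteq W$, and let $K_{0}\le S_{0}$ be the isotropy at $w_{0}$. Each $k\in K_{0}$ preserves the fiber $q^{-1}(w_{0})=T\cdot x_{0}$ and commutes with $T$ by \eqref{CPsh}, hence acts on this fiber by a $T$-translation $t_{k}$; the assignment $k\mapsto t_{k}$ is a continuous homomorphism from compact $K_{0}$ to $T\cong\RR$, therefore trivial. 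Thus $K_{0}$ fixes $x_{0}$, and the orbit map defines an $S_{0}$-equivariant smooth section $\sigma\colon D\to X$ of $q$ over $D$. Consequently $\sigma^{*}\om$ is an $S_{0}$-invariant $1$-form on $D$ satisfying $d(\sigma^{*}\om)=\sigma^{*}q^{*}\Omega=\Omega_{D}$. However, since $D=S_{0}/K_{0}$ is Hermitian symmetric of non-compact type, the center of $K_{0}$ contains a circle acting on the isotropy module $\p\cong T_{w_{0}}D$ as the complex structure $J$, so $\p^{K_{0}}=0$ and no non-zero $S_{0}$-invariant $1$-form exists on $D$. Hence $\sigma^{*}\om=0$, which forces $\Omega_{D}=0$ and contradicts the non-degeneracy of the K\"ahler form on $D$.
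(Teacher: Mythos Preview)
Your proof is correct and follows a genuinely different route from the paper's. Both arguments construct $S$ by a Levi/Whitehead splitting of $\phi^{-1}(S_{0})$ and agree that the crux is ruling out $\Lambda=\{1\}$. The paper handles this indirectly: it invokes the join decomposition $X\cong X_{1}*X_{2}$ (with $X_{2}$ the contractible Sasaki space over $D$) to reduce to the case $W=D$, and then, via the forward reference to Proposition~\ref{pro:S_trans}, appeals to Koszul's Poincar\'e duality for the relative Lie algebra cohomology $H^{*}(\s,\k)$ to conclude that the K\"ahler class of $D$ cannot be an $S_{0}$-invariant coboundary. Your argument bypasses both the join machinery and Koszul: you build an $S_{0}$-equivariant section directly over $\{0\}\times D\subseteq W$ (using that a homomorphism from compact $K_{0}$ to $T\cong\RR$ is trivial) and then kill all $S_{0}$-invariant one-forms on $D$ by the elementary observation that the central circle in $K_{0}$ acts on $\p$ as $J$, forcing $(\p^{*})^{K_{0}}=0$. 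This is shorter and self-contained for the case at hand; the paper's cohomological route is more general (it only uses that $\s$ is unimodular and $\k$ reductive), which is why it is packaged as a separate proposition there.

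One small point you leave implicit: for the decomposition to be an \emph{almost direct product} you should note that $\cN\rtimes\mathrm{U}(k)$ and $S$ actually commute (the paper records this explicitly). This follows at once from your setup: $\phi$ sends $[\cN\rtimes\mathrm{U}(k),S]$ into $[\CC^{k}\rtimes\mathrm{U}(k),S_{0}]=\{1\}$, so $[\cN\rtimes\mathrm{U}(k),S]\subseteq T$; for fixed $n$ the map $s\mapsto[n,s]$ is then a homomorphism from the perfect group $S$ into the abelian group $T$, hence trivial.
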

\begin{proof} 
For the homogeneous Sasaki manifold $X$, 
the exact sequence of groups  \eqref{eq:Sasaki_fib_groups} associated to the Reeb fibering for $X$ induces  
a central extension 
\begin {equation} \label{eq:Sasaki_h} 
\displaystyle 1\to T  \to \;  \left( \cN \rtimes {\rm U}(k) \right) \cdot S  \; 
 \stackrel{\phi}{\to} \;  (\CC^{k} \rtimes {\rm U}(k)) \times S_0\to 1 \; , 
\end{equation}
where the Reeb flow $T = \RR$ maps to the center of $\cN$. Here $$ S \leq \Psh(X)^0$$  is a semisimple normal subgroup  of non-compact type, which is covering $S_{0}$ under $\phi$. In particular, since $S$  is a normal subgroup of  $\Psh(X)^0$,  it commutes with $ \cN \rtimes {\rm U}(k)$.  (Note also that ${\rm U}(k)$ acts faithfully on $\cN$ and maps to a maximal compact subgroup of $\Aut(\cN)$.) 

The kernel $\Lambda$ of the covering $S \to S_{0}$ 
is $$ \ker \phi \cap  S  = \RR \cap  S =   \left( \cN \rtimes {\rm U}(k) \right) \cap  S \; .  $$ 
Moreover,  $\Lambda$ is the center of $S$, since $S_{0}$ has trivial center. 
We claim that $\Lambda$ is an infinite cyclic discrete subgroup  and, in 
particular,  it is a uniform subgroup  in $T$.  Indeed, in the light of Corollary  \ref{cor:hom_sasaki}, 
there exists a  unique contractible homogeneous  Sasaki manifold $X_{1}$ 
with K\"ahler quotient $\CC^{k}$, and similarly a unique homogeneous Sasaki manifold 
$X_{2}$  with K\"ahler quotient $D$.  Let $T_{i} \leq \Psh(X_{i})$ denote the Reeb flow of $X_{i}$. Then (see Section \ref{sect:join}, Corollary \ref{cor:join}) the  join $X_{1} * X_{2}$ is a homogeneous Sasaki manifold with K\"ahler quotient $\CC^{k} \times D$. According to the above, $\Psh(X_{1}) =    \cN \rtimes {\rm U}(k) $, and by Proposition \ref{pro:S_trans} 
below $\Psh(X_{2})^{0} = T_{2} \cdot S$, where $S$ is a closed semisimple Lie subgroup covering
 $S_{0}$ with infinite cyclic kernel $\Lambda = Z(S)$, $T_{2} \cap S = \Lambda$.  
It follows that $\Psh(X)^{0} =  \Psh(X_{1}) *  \Psh(X_{2})^{0}$ has the claimed properties. 
\end{proof}

\subsection{Application to locally homogeneous Sasaki manifolds}
We consider a compact aspherical Sasaki manifold of the form $$ M =  \Gamma \, \backslash X \; ,  $$
where $X$ is a contractible Sasaki manifold  
and $\Gamma$ is a torsion free discrete subgroup contained in $\Psh(X)$. If 
$X$ is a homogeneous Sasaki manifold then $M$ is called a \emph{locally homogenous 
Sasaki manifold}.  


\begin{theorem}\label{undrphi}
Suppose that $X$ is a contractible homogeneous  Sasaki manifold  
and  that $X$ admits a discrete subgroup of isometries with 
$$ \Gamma \, \backslash X$$  compact. Then: 
\begin{enumerate} 
\item The K\"ahler quotient of $X$ is a K\"ahler product
$$ W = \displaystyle \CC^k\times  D$$ of a flat space $ \CC^k$ with
a  symmetric bounded domain $D$.
\item Let $T$ denote the Reeb flow of $X$.
Then $\Gamma \cap T$ is a discrete uniform subgroup of $T$ (in particular, $\Gamma \cap T$ is  isomorphic to $\ZZ$).
\item Let $\phi: \Psh(X) \to  \Isom_{\! h}(W)$ be the Booothby-Wang homomorphism in \eqref{eq:Sasaki_fib_groups}. Then the  subgroup $$ \phi(\Gamma) \, \leq \, \Isom_{\! h}(W)$$  is discrete and uniform. 
\end{enumerate} 
\end{theorem}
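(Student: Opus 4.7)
The plan is to combine the Mostow-type lattice theorems with the explicit structure of $\Psh(X)^{0}$ furnished by the earlier propositions. Since $X$ is a homogeneous space of $\Psh(X)$ with compact isotropy $H$ and $\Gamma\backslash X$ is compact, $\Gamma$ is a uniform lattice in $\Psh(X)$, and consequently $\Gamma_{0} := \Gamma \cap \Psh(X)^{0}$ is a uniform lattice in the identity component. In particular $\Psh(X)^{0}$ is unimodular. As $\Psh(X)^{0}$ acts transitively on the contractible Sasaki manifold $X$, Proposition \ref{prop:Sasaki_homuni} applies, yielding the K\"ahler product decomposition $W = \CC^{k} \times D$ with $D$ a symmetric bounded domain (proving (1)) and producing the normal subgroup $\cN \rtimes {\rm U}(k) \leq \Psh(X)^{0}$, where $\cN$ is the $(2k+1)$-dimensional Heisenberg group and $T = Z(\cN)$ is the Reeb flow.

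For part (2), the first step is to identify $\cN$ as the nilradical of $\Psh(X)^{0}$. Using the full structure $\Psh(X)^{0} = (\cN \rtimes {\rm U}(k)) \cdot S$ from Proposition \ref{pro:pshX_dec}: the semisimple factor $S$ has trivial nilradical; the only normal subgroups of ${\rm U}(k)$ that could enlarge $\cN$ are contained in the center ${\rm U}(1)$; and $\cN \rtimes {\rm U}(1)$ is solvable but not nilpotent, since ${\rm U}(1)$ acts non-trivially on $\cN/T \cong \CC^{k}$. Hence $\cN$ is the nilradical. By Mostow's theorem on lattices, $\Gamma_{0} \cap \cN$ is then a uniform lattice in $\cN$. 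Since $T = Z(\cN)$ is a characteristic---hence rational---subgroup of the simply connected nilpotent Lie group $\cN$, the classical Mal'cev intersection theorem gives that $\Gamma_{0} \cap T = (\Gamma_{0} \cap \cN) \cap T$ is a uniform lattice in $T \cong \RR$. Consequently $\Gamma \cap T \supseteq \Gamma_{0} \cap T$ is discrete and uniform in $T$, hence isomorphic to $\ZZ$, giving~(2).

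Part (3) follows from the standard fact: if $\Gamma_{0}$ is a uniform lattice in a connected Lie group $G$ and $N \trianglelefteq G$ is a closed normal subgroup with $\Gamma_{0} \cap N$ a uniform lattice in $N$, then $\Gamma_{0}N$ is closed in $G$ and its image in $G/N$ is a uniform lattice. Taking $G = \Psh(X)^{0}$ and $N = T$, the image $\phi(\Gamma_{0})$ is a uniform lattice in $\Psh(X)^{0}/T \cong \Isom_{\! h}(W)^{0}$ (the isomorphism coming from Proposition \ref{pr:noncompact}). Since $\phi(\Gamma_{0})$ has finite index in $\phi(\Gamma)$ and $\Isom_{\! h}(W)^{0}$ has finite index in $\Isom_{\! h}(W)$ by Proposition \ref{pro:psh_connected}, the group $\phi(\Gamma)$ is itself a discrete uniform subgroup of $\Isom_{\! h}(W)$.

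The crux lies in (2), namely the verification that the Heisenberg group $\cN$ produced by Proposition \ref{prop:Sasaki_homuni} actually coincides with the nilradical of $\Psh(X)^{0}$, so that Mostow's theorem delivers a lattice in $\cN$ (and then, via the center, in $T$). A more geometric alternative would be to analyze the closure of the Reeb flow in $\Isom(\Gamma\backslash X)$ as a torus and show directly that it is one-dimensional, but ruling out higher-dimensional orbit closures by such means appears less clean than invoking the lattice theorems above.
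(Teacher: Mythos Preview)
Your overall architecture matches the paper's: pass to $\Gamma_{0}=\Gamma\cap\Psh(X)^{0}$, use unimodularity plus Proposition~\ref{prop:Sasaki_homuni} for (1), then obtain a lattice in $\cN$, descend to its center $T$, and deduce (3) from (2). The difference---and the gap---is in how you reach a lattice in $\cN$.

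You argue that $\cN$ is the nilradical of $\Psh(X)^{0}$ and then invoke ``Mostow's theorem on lattices'' to conclude that $\Gamma_{0}\cap\cN$ is uniform in $\cN$. But there is no such theorem for general connected Lie groups: a uniform lattice need not meet the nilradical in a lattice when the Levi part has compact factors. For a simple counterexample take $G=\RR\times{\rm SU}(2)$ (nilradical $\RR$) and $\Gamma=\{(n,g^{n}):n\in\ZZ\}$ with $g\in{\rm SU}(2)$ of infinite order; then $\Gamma$ is uniform but $\Gamma\cap\RR=\{0\}$. In your situation the Levi part of $\Psh(X)^{0}$ contains the compact factor ${\rm SU}(k)$, so you are exactly in the regime where the naive statement fails. (Mostow's lattice theorem in Raghunathan~\cite[Ch.\,III]{ra} is for \emph{solvable} $G$.)

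The paper closes this gap by a two-step descent. First it applies Proposition~\ref{prop:key-correct} (equivalently \cite[Ch.\,4, Thm.\,1.7]{VGS}): for a uniform lattice in a connected Lie group $G$, the intersection with $\fR\fK$ (radical times maximal compact normal subgroup of the Levi factor) is a uniform lattice in $\fR\fK$. Here $\fR\fK=\cN\rtimes{\rm U}(k)$, so $\Gamma_{0}\cap(\cN\rtimes{\rm U}(k))$ is uniform in $\cN\rtimes{\rm U}(k)$. Second, since ${\rm U}(k)$ is compact, the Auslander--Bieberbach theorem~\cite{ab} gives that this lattice meets $\cN$ in a uniform lattice of $\cN$. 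From there your Mal'cev step to $T=Z(\cN)$ and your argument for (3) go through unchanged. In short: your identification of $\cN$ as the nilradical is correct but does not buy what you need; insert the intermediate passage through $\cN\rtimes{\rm U}(k)$ and the proof is complete.
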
 

\smallskip
\begin{corollary} Let $M = X/ \Gamma$ be a compact locally homogeneous Sasaki manifold. Then 
$M$ is a Sasaki manifold with compact Reeb flow $T  = S^{1}$. Moreover, a finite covering space of 
$M$ is a regular Sasaki manifold. 
\end{corollary}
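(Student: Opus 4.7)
The plan is to split the statement into two parts and handle them using Theorem \ref{undrphi}.

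For the first assertion, I would note that since $T$ is central in $\Psh(X)$ and $\Gamma \leq \Psh(X)$, the Reeb flow commutes pointwise with $\Gamma$ and therefore descends to a smooth action on $M = \Gamma\backslash X$. By Theorem \ref{undrphi}(2) the intersection $T \cap \Gamma$ is a uniform lattice in $T \cong \RR$, so the induced action factors through $T/(T \cap \Gamma) \cong S^1$. Since $\Gamma$ preserves the pseudo-Hermitian structure $\{\om,J\}$ on $X$, this structure descends to $M$ and exhibits the circle group as the Reeb flow of the induced Sasaki structure on $M$.

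For the second assertion, the plan is to build a finite cover on which the induced $S^{1}$-action is free, which is exactly what regularity amounts to. By Theorem \ref{undrphi}(1) the K\"ahler quotient $W = \CC^k \times D$ is a Hadamard manifold: it is a Riemannian product of a flat space with a bounded symmetric domain of non-compact type, and hence complete, simply connected and of non-positive sectional curvature. By Theorem \ref{undrphi}(3) the image $\phi(\Gamma) \leq \Isom_{\! h}(W)$ is discrete and uniform, hence finitely generated. Since
\[
\Isom_{\! h}(W)^{0} = \bigl(\CC^k \rtimes {\rm U}(k)\bigr) \times \Isom_{\! h}(D)^{0}
\]
admits a faithful linear representation (the non-compact factor $\Isom_{\! h}(D)^{0}$ being semisimple and centerless, \cf Proposition \ref{pro:psh_connected}) and $\Isom_{\! h}(W)$ has finitely many components, I would apply Selberg's lemma to extract a torsion-free subgroup $\bar\Gamma_0 \leq \phi(\Gamma)$ of finite index, and then set $\Gamma_0 = \Gamma \cap \phi^{-1}(\bar\Gamma_0)$, which has finite index in $\Gamma$.

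The finite cover $M_0 = \Gamma_0\backslash X$ should then be the desired regular Sasaki manifold. To verify freeness of the induced circle action $S^{1} = T/(T \cap \Gamma_0)$ on $M_0$, I would argue by contradiction: if $\varphi_t(x) = \gamma \cdot x$ for some $x \in X$ and $\gamma \in \Gamma_0$, then applying $\phi$ (which kills $T$) shows that $\phi(\gamma) \in \bar\Gamma_0$ has a fixed point on the Hadamard manifold $W$; being torsion-free it must equal the identity by the Cartan fixed-point theorem, so $\gamma \in T \cap \Gamma_0$, and since $T$ acts freely on $X$ (Lemma \ref{lem:free}) this forces $\varphi_t = \gamma$, i.e.\ $t$ represents the trivial class in $T/(T \cap \Gamma_0)$.

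The main obstacle is the extraction of the torsion-free finite-index subgroup $\bar\Gamma_0$, since $\phi(\Gamma)$ need not be torsion-free even though $\Gamma$ is (elements $\gamma \in \Gamma$ whose nontrivial powers land in the kernel $T \cap \Gamma \cong \ZZ$ produce torsion downstairs). This is the one place where the specific product structure of $W$ supplied by Theorem \ref{undrphi}(1) together with the linearity of $\Isom_{\! h}(W)$ enter in an essential way beyond the general Sasaki machinery.
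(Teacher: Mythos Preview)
Your proposal is correct and follows essentially the same route as the paper. The paper does not give this corollary its own proof, but the argument appears in the proof of Corollary~\ref{cor:quasi_reg} in Section~\ref{sect:summary}: it too invokes Theorem~\ref{undrphi}(2) for $\Gamma\cap T\cong\ZZ$, then appeals to linearity of $\Isom_{\! h}(W)$ to extract (via Selberg's lemma) a torsion-free finite-index subgroup of $\phi(\Gamma)$, concluding that the corresponding finite cover is regular. Your explicit verification of freeness via the Cartan fixed-point theorem is a welcome detail the paper leaves implicit; note that a slightly more economical phrasing avoids non-positive curvature altogether, since $\phi(\Gamma_0)$ discrete and torsion-free already forces free action on $W$ by properness (any point stabilizer is compact, hence finite, hence trivial).
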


\smallskip
\begin{remark} 
Certain linear	flows on the sphere give rise to \emph{irregular} compact Sasaki manifolds, \cf \/ \cite[Chapters 2, 7]{bg}.
\end{remark}

For the preparation of the proof  of Theorem \ref{undrphi} we shall recall some standard facts about:

\smallskip
\paragraph{\emph{Levi decomposition and uniform lattices}}
In general a \emph{connected} Lie group $\fG$ admits a Levi decomposition 
 $$ \fG= \fR\cdot \fS \; , $$ 
where $\fR$ is the solvable radical of $\fG$ and $\fS$ is a semisimple subgroup. 
Let $\fK$ denote the 
maximal compact and \emph{connected} normal 
subgroup of $\fS$, then put
$\fS_{0} = \fG / (\fR \fK)$. Note that $\fS_{0}$ is semisimple of non-compact type.
We will need the following fact (see \cite[Chapter 4, Theorem 1.7]{VGS}, for example):

\begin{pro} \label{prop:key-correct}
Let $\Gamma$ be a uniform lattice in $\fG$. 
Then the intersection
$(\fR\fK) \cap \Gamma$ is a uniform lattice in 
$\fR \fK$. In particular,  in the associated exact sequence
\begin{equation}\label{startG*}
\begin{CD}
 1@>>> \fR \fK @>>>
 \fG@>\nu >> \fS_0 @>>>1,\;
\end{CD}\end{equation}
the image $\nu(\Gamma)$ is a uniform lattice in  the semisimple Lie group $\fS_0$. 
\end{pro}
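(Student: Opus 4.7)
The plan is to reduce to the classical Mostow theorem for the solvable radical and then absorb the compact semisimple factor $\fK$ by a straightforward quotient argument.

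First, I will establish that $\fR\fK$ is a closed normal subgroup of $\fG$. At the level of Lie algebras, $\fr + \fk$ is the preimage in $\g$ of the (unique) maximal compact ideal of the semisimple quotient $\g/\fr$; in particular, $\fr+\fk$ is an ideal of $\g$, independent of the choice of Levi subalgebra. Since $\fG$ is connected, $\fR\fK$ is a connected closed normal subgroup corresponding to this ideal.

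Next, I invoke \emph{Mostow's theorem} for the solvable radical (e.g.\ Raghunathan, \emph{Discrete subgroups of Lie groups}, Ch.~III): for any lattice $\Gamma$ in a connected Lie group $\fG$ with solvable radical $\fR$, the subset $\fR\,\Gamma$ is closed in $\fG$, the image $\Gamma\fR/\fR$ is a lattice in the semisimple group $\fG/\fR$, and $\fR\cap\Gamma$ is a lattice in $\fR$. Because $\Gamma$ is uniform here, all three lattices are uniform. This is the key input.

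The main step is now to upgrade the closedness of $\fR\Gamma$ to closedness of $\fR\fK\,\Gamma$. Passing to $\fG/\fR$, which is a (center-free) semisimple Lie group, the image $\fR\fK/\fR$ is compact (it is the continuous image of the compact group $\fK$), and the image $\Gamma\fR/\fR$ is closed by Mostow. In any locally compact Hausdorff topological group, the product of a compact set and a closed set is closed; hence
\[
(\fR\fK/\fR)\cdot(\Gamma\fR/\fR) \;=\; \fR\fK\,\Gamma/\fR
\]
is closed in $\fG/\fR$, so $\fR\fK\,\Gamma$ is closed in $\fG$.

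From this I conclude the proposition in two parts. Discreteness of $\nu(\Gamma)\subseteq \fS_0=\fG/\fR\fK$ is equivalent to $\fR\fK\,\Gamma$ being closed in $\fG$, which we just showed. The quotient $\fS_0/\nu(\Gamma)=\fG/(\fR\fK\,\Gamma)$ is a continuous image of the compact space $\fG/\Gamma$, hence compact, so $\nu(\Gamma)$ is a uniform lattice in $\fS_0$. For the intersection, the natural map
\[
\fR\fK\big/(\fR\fK\cap\Gamma)\;\hookrightarrow\;\fG/\Gamma
\]
is an injection onto the preimage in $\fG/\Gamma$ of the identity coset of $\fS_0/\nu(\Gamma)$; this preimage is closed (by closedness of $\fR\fK\,\Gamma$) inside the compact space $\fG/\Gamma$, and moreover coincides with the fiber over the identity of the induced continuous surjection $\fG/\Gamma\twoheadrightarrow \fS_0/\nu(\Gamma)$. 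A closed fiber of a map from a compact space is compact, so $\fR\fK/(\fR\fK\cap\Gamma)$ is compact, and $(\fR\fK)\cap\Gamma$ is a uniform lattice in $\fR\fK$.

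The main obstacle is really just the careful organization of quotient arguments: everything rests on Mostow's theorem for the solvable radical, after which the compact normal semisimple factor $\fK$ is absorbed essentially for free via the ``compact times closed is closed'' trick in $\fG/\fR$. The only subtle bookkeeping is to verify that $\fR\fK$ is a well-defined normal subgroup despite non-uniqueness of the Levi decomposition, which is handled by the characterization of $\fr+\fk$ as an ideal of $\g$.
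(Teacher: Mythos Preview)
Your proof is correct. Note, however, that the paper does not actually prove this proposition: it is stated as a known fact with a reference to \cite[Chapter 4, Theorem 1.7]{VGS}, so there is no ``paper's proof'' to compare against. What you have written is a clean self-contained argument supplying exactly what the paper outsources to the literature.

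A couple of small points worth tightening. First, when you assert that ``discreteness of $\nu(\Gamma)$ is equivalent to $\fR\fK\,\Gamma$ being closed,'' the direction you actually need (closed $\Rightarrow$ discrete) uses that $\nu(\Gamma)$ is countable: a closed countable subgroup of a Lie group is discrete. This is standard but worth a word. Second, in the fiber argument you implicitly use that the continuous bijection $\fR\fK/(\fR\fK\cap\Gamma)\to(\fR\fK\,\Gamma)/\Gamma$ is a homeomorphism; this follows because $\fG\to\fG/\Gamma$ is a local homeomorphism (covering map), so its restriction to the closed subgroup $\fR\fK$ is open onto its image. With these two remarks made explicit, the argument is complete.

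Your route via Mostow's theorem for the radical plus the ``compact $\times$ closed $=$ closed'' trick in $\fG/\fR$ is exactly the standard strategy underlying the cited result, so in spirit you are reproducing the literature proof rather than doing something new.
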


Remark in addition the following:  
As the subgroup $\nu(\Gamma) \leq  \fS_0$ is discrete and 
uniform, and since $\fS_{0}$ has no compact normal connected subgroup,
the image of $\nu(\Gamma)$ is a Zariski dense subgroup
in the adjoint form of $\fS_0$  (by Borel's density theorem, \cf \cite{ra}). 
Consider any connected closed subgroup $\cG$ of  $\fS_0$, which contains 
$\nu(\Gamma)$. Then $\cG$ is uniform and Zariski-dense.  This implies that 
$\cG = \fS_{0}$. 

\smallskip 
Now we are ready for the
\begin{proof}[Proof of Theorem \ref{undrphi}]
Note that  $\Gamma_{0} = \Gamma \cap \Psh(X)^0$ is a discrete uniform subgroup of  $\Psh(X)^0$
(compare \cite[Lemma 2.3]{bk}). The existence of a lattice  subgroup implies that  $\Psh(X)^0$ is a \emph{unimodular}  Lie group, 
see e.g.\  \cite[1.9 Remark]{ra}.
By Proposition \ref{prop:Sasaki_homuni}, 
$W = \CC^k \times D$, where $D$ is a symmetric bounded domain  and
$$ \Isom_{\! h}(W) =  \left(  \CC^k\rtimes {\rm U}(k) \right) \times \Isom_{\! h}(D) \; . $$


Since $S_0 = \Isom_{\! h}(D)^{0}$ is semisimple of non-compact type, 
we can apply Proposition \ref{prop:key-correct} to $\Psh(X)^0$, 
to yield that the intersection 
$\Gamma\cap(\cN\rtimes {\rm U}(k))$ is discrete uniform in $\cN\rtimes{\rm U}(k)$.
Then the Auslander-Bieberbach theorem \cite{ab} shows that, a fortiori,  
$\Gamma\cap \cN$ is uniform in $\cN$. As $\RR$ is the center of the Heisenberg group 
$\cN$, $\Gamma\cap \RR$ is also uniform in $\RR$ (\cf \cite[Chapter II]{ra}). 
In particular, in the light of 
\eqref{startG*},  this implies 
that $\phi(\Gamma)$ is a discrete uniform subgroup of $\Isom_{\! h}(W)$.
\end{proof}

\subsection{Sasaki homogeneous spaces over symmetric bounded domains}
We assume now that the 
K\"ahler quotient of $X$ is a symmetric bounded domain $D$. 
Let $$ S_{0} = \Isom_{h}(D)^{0}$$  be the identity component of the group of holomorphic isometries of $D$, and $$ \phi: \, \Psh(X)^{0}   \to S_{0} $$  the Boothby-\-Wang homomorphism. 
Recall that $S_{0}$ is semisimple of non-compact type with trivial center. Moreover, we can write 
$$D= S_{0}/K_{0} \, , $$ 
where $K_{0}$ is a maximal compact subgroup of $S_{0}$. 

\smallskip
%
We prove that $X$ is a Sasaki 
homogeneous space of a semisimple Lie group: 

\begin{pro}  \label{pro:S_trans}
There exists a semisimple closed normal  subgroup $$S  \, \leq \, \Psh(X)^{0} $$  such that the restricted Boothby-Wang  map $$\phi: \, S \to S_{0}$$ is a 
covering with infinite cyclic kernel $\Lambda$, where $\Lambda$ is the center of $S$. 
 In particular, if $T = \RR$ denotes the Reeb flow on $X$,  then 
 $$ \Psh(X)^{0} = S \cdot \RR   ,  \text{ with } S \cap \RR  = \Lambda\;  ( \, \cong \ZZ)  \, .$$ 
Moreover, the subgroup $S$ of\/ $\Psh(X)$ acts transitively on $X$.
\end{pro}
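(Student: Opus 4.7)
The plan is to integrate a Lie algebra splitting of the Boothby-Wang central extension and to verify transitivity via an isotropy analysis on the Reeb fibre. First, since $W = D$ is contractible, $H^{1}(D,\RR)=0$, and Proposition \ref{pr:noncompact} produces the central extension
\begin{equation*}
1 \to T=\RR \to \Psh(X)^{0} \stackrel{\phi}{\to} S_{0} \to 1
\end{equation*}
(obtained from the Boothby-Wang sequence by restricting to identity components; its kernel $T$ is connected). As $\s_{0}$ is semisimple, Whitehead's lemma $H^{2}(\s_{0},\RR)=0$ implies that the associated Lie algebra extension splits as a direct sum $\g = \RR \oplus \s$, where $\g$ denotes the Lie algebra of $\Psh(X)^{0}$ and $\s = [\g,\g]$ is a characteristic semisimple ideal mapped isomorphically onto $\s_{0}$ by $d\phi$. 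I would take $S$ to be the connected Lie subgroup of $\Psh(X)^{0}$ integrating $\s$. Then $S$ is normal, $\phi|_{S}\colon S \to S_{0}$ is a Lie group covering, and $\Psh(X)^{0} = S\cdot T$. The kernel $\Lambda := \ker(\phi|_{S}) = S \cap T$ is a discrete subgroup of $T \cong \RR$, hence either trivial or infinite cyclic; moreover, since $S_{0}$ has trivial centre, $Z(S)\subseteq \ker(\phi|_{S}) = \Lambda$, yielding $\Lambda = Z(S)$.

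The main obstacle is to exclude the split case $\Lambda = \{e\}$. I would argue by contradiction, using the fact that on a symmetric space of non-compact type there is no $S_{0}$-invariant primitive of the K\"ahler form. Assume $\Lambda = \{e\}$, so $\phi|_{S}$ is an isomorphism. Fix $x\in X$, set $q_{0}=q(x)$, and let $K_{1}' := S \cap \phi^{-1}(K_{0})$; under this assumption, $K_{1}'$ identifies with $K_{0}$ via $\phi|_{S}$, so it is compact. Because $T$ acts freely on the fibre through $x$, there is a well-defined continuous homomorphism $\epsilon\colon K_{1}' \to T=\RR$ determined by $k\cdot x = \epsilon(k)\cdot x$, with $\ker \epsilon = \mathrm{Stab}_{S}(x)$ and $\epsilon|_{\Lambda}$ equal to the inclusion $\Lambda \hookrightarrow T$. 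Since $K_{1}'$ is compact, $\epsilon \equiv 0$, so the orbit $S\cdot x$ meets each Reeb fibre in exactly one point. This produces an $S_{0}$-equivariant smooth section $\sigma\colon D \to X$ of the Boothby-Wang bundle, and $\theta := \sigma^{*}\omega$ is then an $S_{0}$-invariant one-form on $D$ with $d\theta = \sigma^{*}q^{*}\Omega = \Omega$. But the isotropy representation of $K_{0}$ on $\p_{0}$ has no non-zero invariants on any irreducible factor of $D$, so no non-zero $S_{0}$-invariant one-form on $D$ exists. This contradiction forces $\Lambda \neq \{e\}$, and hence $\Lambda \cong \ZZ$.

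Finally I would verify closedness and transitivity. Since $\Psh(X)^{0}/S \cong T/\Lambda \cong S^{1}$ is Hausdorff, $S$ is closed. For transitivity, the homotopy exact sequence of the contractible fibration $K_{0} \to S_{0} \to D$ shows that $\pi_{1}(K_{0}) \to \pi_{1}(S_{0})$ is surjective, which implies that the cover $K_{1}' \to K_{0}$ has connected total space. Rerunning the evaluation $\epsilon\colon K_{1}' \to \RR$ with $\Lambda \cong \ZZ$ now non-trivial, $\epsilon(K_{1}')$ is a connected subgroup of $\RR$ containing the non-trivial discrete subgroup $\Lambda$, hence $\epsilon(K_{1}') = T$. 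For any $y\in X$, choose $s\in S$ with $\phi(s)\cdot q_{0} = q(y)$, so $s^{-1}y = t\cdot x$ for some $t\in T$; writing $t=\epsilon(k)$ with $k\in K_{1}'\subset S$ gives $y = (sk)\cdot x \in S\cdot x$. Thus $S$ acts transitively on $X$, completing the proof.
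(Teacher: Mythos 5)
Two of your steps are genuinely different from the paper's and are correct: to exclude the split case $\Lambda=\{e\}$ you construct the $S_0$-equivariant section $\sigma$ and use that the isotropy representation of $K_0$ on $\p_0$ has no invariants (so no non-zero invariant one-form exists on $D$), whereas the paper trivializes the bundle via Proposition \ref{pr:oductiden} and invokes Koszul's Poincar\'e duality for $H^{*}(\s,\k)$ to see that the non-degenerate class $[\Omega]$ cannot be invariantly exact; and your transitivity argument via the evaluation homomorphism $\epsilon\colon K_1'\to T$, with connectedness of $K_1'$ extracted from $\pi_1(K_0)\twoheadrightarrow\pi_1(S_0)$, is a correct and more detailed substitute for the paper's rather terse dimension count giving $X=S/K$.

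However, there is a genuine gap at the sentence ``$\Lambda:=\ker(\phi|_S)=S\cap T$ is a discrete subgroup of $T\cong\RR$, hence either trivial or infinite cyclic.'' The kernel of the covering $\phi|_S$ is discrete only in the \emph{intrinsic} topology of the (a priori merely immersed) Levi subgroup $S$, not in the subspace topology of $T\leq\Psh(X)^0$. Abstractly, $\Lambda$ is a torsion-free quotient of $\pi_1(S_0)$, hence $\Lambda\cong\ZZ^k$, and $k\geq 2$ is a real possibility when $D$ is reducible (e.g.\ $\pi_1(\PSL(2,\RR)\times\PSL(2,\RR))\cong\ZZ^2$); a rank-$k$ subgroup with $k\geq 2$ can embed densely in $\RR$, and central extensions of $S_0$ by $\RR$ realizing this do exist, with $S$ non-closed. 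Your closedness argument ``$\Psh(X)^0/S\cong T/\Lambda\cong S^1$ is Hausdorff'' presupposes exactly the discreteness in question, so it is circular at that point; note also that your $\epsilon$-argument proves transitivity for \emph{any} $k\geq 1$, so it cannot detect the problem. The paper confronts this case explicitly: allowing $\Lambda\cong\ZZ^k$, $k\geq 1$, it produces a closed subgroup $\fB\cong\RR^k\leq S$ containing $\Lambda$ which $\phi$ maps to a $k$-torus in the center of $K_0$, then compares $1+\dim D=\dim X\geq\dim S/K=k+\dim D$ (with $K$ the maximal compact subgroup of $S$) to force $k=1$, and finally obtains closedness of $S$ from Goto's theorem \cite[Theorem B]{goto}, not from the quotient being a circle. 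You need some such argument to rule out $k\geq 2$ before your conclusions $\Lambda\cong\ZZ$ and $S$ closed are justified.
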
 
\begin{proof} 
Put $G = \Psh(X)^0$. Then $G$ satisfies the exact sequence 
$$
\begin{CD} 
1@>>> T =\RR  @>>> G@>{\phi}>> S_0 @>>>1 \;  ,
\end{CD}
$$ %
where the Reeb flow $T$ is a central subgroup of $G$. By the Levi-decomposition theorem, the above exact sequence splits and 
$$ G = T \cdot S \; , $$ where 
$S$ is a covering group of $S_0$ under $\phi$. Note that $S$ is a normal subgroup of $G$, and  $\ker \phi \cap S =T \cap S = Z(S)$ is the center of $S$, and a torsion-free abelian group.  

Assume that $T \cap S  = \{1 \} $.  In particular $S = S_0$ and 
$G = T \times S_0$. Then $K_0$ is also a maximal compact subgroup of $G$.
Choose $x_o \in X$ such that $K_0 \, x_0 = x_0$. Then $S_0 \cdot x_0 = S_0/K_0$ and it follows that 
$$ X = \RR \times S_0/K_0 \; . $$ Moreover, the Boothby-Wang fibering $q: X \to D$ corresponds to the projection onto the second factor. 
Let $\om_0$ be the contact form of the Sasaki structure on $X$.
By Proposition \ref{pr:oductiden} there exists a one-form $\theta$ on $D = S_0/K_0$ 
such that 
$$ \om_0=dt+{q}^*\theta \; . $$ Since $\om_0$ is invariant by $S = S_0$, this implies that  ${q}^*\theta$ is invariant by $S$.  Therefore also $\theta$ is  invariant by $S_0$. In particular, the two form $\Omega=d\theta$
is an $S_0$-invariant exact form. 

We can now apply a classical result of Koszul to $\Omega$ as follows.
Let $\s$ and $\k$ denote the Lie algebras of $S_0$ and $K_0$, respectively. The $S_0$-invariant K\"ahler form $\Omega$ defines a cohomology class in the relative Lie algebra cohomology group $H^2(\s,\k)$. Since $\s$ is unimodular and $\k$ is a reductive subalgebra of $\s$, a  result of Koszul \cite{koszul} asserts that the cohomology ring $H^*(\s,\k)$ satisfies Poincar\'e duality. 
Since $\Omega$ is a non-degenerate two-form, the class $[\Omega] \in H^2(\s,\k) $ is non-zero. This contradicts  $\Omega = d \theta$, for some $S_0$-invariant form $\theta$ on $S_0/K_0$. We conclude that $T \cap S = \{1 \}$ is not possible. 

Therefore, we have that $\ker \phi \cap S = T \cap S = \Lambda$ is isomorphic to $\ZZ^{k}$, $k \geq 1$.   Since $\Lambda$ is the center of $S$, there exists a closed $k$-dimensional subgroup $\fB$ of $S$, $\fB \cong \RR^{k}$, containing $\Lambda$, and $\phi$ maps $\fB$ to a toral subgroup $(S^{1})^{k}$ contained in the center of $K_0$, \cf \cite[Ch. VI, \S 1]{Helgason}.  Let $K$ be the maximal compact subgroup of $S$. We then have 
$$1 + \dim D  = \dim X \geq \,  \dim S/K = k  + \dim S_{0}/K_{0}  = k + \dim D \; .$$ 
Since $k \geq 1$,  we deduce $k=1$ and $X = S/K$.  Hence, $S$ acts transitively on $X$. 
Since $\ZZ \cong \ker \phi \cap S$ is an infinite cyclic discrete subgroup of $T$, 
it also follows that $S$ is a closed subgroup of $\Psh(X)$, see \cite[Theorem B]{goto}.  
\end{proof}

\subsection{Summary on locally homogeneous Sasaki manifolds} \label{sect:summary} 

Most of the above is summarized in Theorem \ref{iso=equal} in the introduction: 

\begin{proof}[Proof of Theorem $\ref{iso=equal}$]
Statement (1) about  the K\"ahler quotient $W= X/T$ is  established in (1) of Theorem  \ref{undrphi}.

\smallskip
We remark next that the Reeb flow $T$ is normal in $\Isom(X)$. 
Indeed, since $X$ is non-compact there can be  only two Killing fields $\{ \cA, -\cA\}$ which are Sasaki compatible with the metric $g$ on $X$ (\cf  \cite{tanno,kashi,tachiyu}). It follows that $\Isom(X) = \Pspm(X)$. 
The properties of the homomorphism $\phi: \Isom(X) \to \Isom_{\! \! h}^{\! \!\pm}(W)$ are 
established in Proposition \ref{pr:noncompact} and Lemma \ref{lem:iso_lift}, proving (2).

\smallskip
Let $\bar \tau: W \to W$ be an anti-holomorphic involution (which exists by Proposition \ref{order2isometry} and Note \ref{note:isomN}). Then by Lemma  \ref{lem:iso_lift},  there exists an anti pseudo-Hermitian and involutive lift $\tau: X \to X$. Now (3)  follows.  

\smallskip
Since  $\Isom(X) =  \Pspm(X)$, we deduce that $\Isom(X)^{0} =  \Psh(X)^{0}$. 
Therefore part (4) is a consequence of Proposition \ref{pro:pshX_dec}.  

\smallskip
Finally, let $\Gamma\big\backslash G/H$ be a  locally homogeneous aspherical
Sasaki manifold, and $X= G/H$. Then there is the exact sequence :
\begin{equation*}\begin{CD}
1@>>> \Gamma@>>> N_{\Isom\,(X)}(\Gamma)@>>> 
\Isom\,(\Gamma\backslash X)@>>>1.
\end{CD}\end{equation*} Thus the claim (5) (stated below of Theorem $\ref{iso=equal}$)
follows from (3).
\end{proof}


\smallskip 
\begin{proof}[Proof of  Corollary $\ref{cor:quasi_reg}$]
Assume that $\Gamma \backslash X$ is compact. As usual $T$ denotes the Reeb flow for $X$. Then by (2) of Theorem \ref{undrphi}, $\Gamma \cap T$ is an infinite cyclic group $\ZZ$. 
Put $$ S^1 = T \big/ (\Gamma\cap T) \; . $$
According to (3) of Theorem \ref{undrphi}, taking the quotient of $\Gamma \backslash X$ by $S^1$, this  
induces an $S^1$-bundle over a compact locally homogeneous aspherical K\"ahler orbifold of the form:
\begin{equation*}
\begin{CD}
S^1@>>>\Gamma\big\backslash  G/H@>>>\phi(\Gamma)\big\backslash W \; . 
\end{CD}\end{equation*}
Here $S^1$ induces the Reeb field of $\Gamma \backslash X$. 
This $S^1$-bundle is usually referred to as a  \emph{Seifert fibering} (\cf \cite{lr}).
In particular, since $\Isom_{\! h}(W)$ is a linear Lie group, we can choose a torsionfree finite index normal subgroup
of $\phi(\Gamma)$. Therefore, some finite cover of $\Gamma\big\backslash G/H$ becomes  a regular Sasaki manifold. 
This proves Corollary \ref{cor:quasi_reg}.
\end{proof}

\subsubsection{Solvable fundamental group} \label{sect:solvable_case}
Note (see \cite[Theorem 0.2]{bc})  that every compact aspherical K\"ahler manifold $N$ with virtually solvable fundamental group $\Gamma$ is biholomorphic to a flat K\"ahler manifold $\CC^{k} /\,\Gamma$ for some embedding of $\Gamma$ into $\CC^{k} \rtimes \mathrm{U}(k)$ as a discrete uniform subgroup. This shows, that the K\"ahler manifold $N$, in fact, admits a locally homogeneous (and flat) K\"ahler structure, with respect to its original complex structure. Based on this result we prove now the following (which is also implying Corollary \ref{cor:solvable} in the introduction):

\begin{pro}  \label{pro:Sasaki_def}
Let $M$ be a regular  compact aspherical Sasaki manifold with 
virtually solvable fundamental group. Then the given Sasaki structure on $M$ can be deformed 
(via regular Sasaki structures) to a locally homogeneous regular Sasaki structure. 
\end{pro}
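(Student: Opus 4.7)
The plan is to realise the required deformation in two steps: first deform the K\"ahler base to a translation-invariant flat model within its de Rham class, then lift the whole deformation to the total space through the Boothby-Wang fibration.

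First I would analyse the underlying geometry of $M$. Being compact and regular, its Reeb flow cannot be isomorphic to $\RR$ (a free proper $\RR$-action on a compact manifold would produce closed non-compact orbits), so $T = S^{1}$ and Proposition \ref{pr:bw_fib} furnishes a principal $S^{1}$-bundle $q : M \to N$ with connection one-form $\omega_{0}$ satisfying $d\omega_{0} = q^{*}\Omega_{0}$, where $\Omega_{0}$ is the K\"ahler form on $N$. The long exact homotopy sequence, together with the non-vanishing of the symplectic class $[\Omega_{0}]$ (which is the rational Euler class of the bundle), shows that $N$ is aspherical; and $\pi_{1}(N)$, being a quotient of $\pi_{1}(M)$, is virtually solvable. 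Invoking \cite[Theorem 0.2]{bc}, I identify $N$ biholomorphically with a flat K\"ahler manifold $\CC^{k}/\Delta$, where $\Delta \leq \CC^{k} \rtimes \mathrm{U}(k)$ is a discrete uniform subgroup.

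Next, I would produce a constant-coefficient K\"ahler representative $\Omega_{1}$ of the class $[\Omega_{0}]$. After lifting to the finite torus cover $\CC^{k}/\Lambda$, where $\Lambda$ is the translation subgroup of $\Delta$, averaging the pullback of $\Omega_{0}$ over the torus action yields a translation-invariant closed two-form in the same de Rham class; positivity of type-$(1,1)$ forms is preserved under averaging, and $\Delta$-invariance follows from the fact that $\Delta$ normalises the translation subgroup. Thus $\Omega_{1}$ descends to a flat K\"ahler form on $N$ cohomologous to $\Omega_{0}$. The convex combination $\Omega_{t} := (1-t)\Omega_{0} + t\Omega_{1}$ is then a smooth path of K\"ahler forms on $N$ all representing the class $[\Omega_{0}]$.

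To transport this path to $M$, I would pick a fixed primitive $\eta_{1}$ of the exact form $\Omega_{1} - \Omega_{0}$ on $N$ and set
\begin{equation*}
\omega_{t} := \omega_{0} + t\, q^{*}\eta_{1}.
\end{equation*}
Since $q^{*}\eta_{1}$ annihilates the vertical bundle, one has $\omega_{t}(\cA) = 1$, so $\omega_{t}$ remains a connection form on the unchanged principal bundle with the same Reeb vector field, while $d\omega_{t} = q^{*}\Omega_{t}$. Pulling back the complex structure of $N$ through the new horizontal distribution $\ker \omega_{t}$ yields a compatible complex structure $J_{t}$, and $\{\omega_{t}, J_{t}\}$ is a regular Sasaki structure on $M$ that varies smoothly in $t$.

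Finally, at $t = 1$ the K\"ahler base is translation-invariant flat, and the integral cohomology class of $\Omega_{1}$ on $\CC^{k}/\Delta$ determines a central extension of $\CC^{k}$ by $\RR$ that is a Heisenberg-type Lie group $\cN$, into which $\Delta$ lifts as a discrete uniform subgroup $\tilde \Delta$; the endpoint Sasaki structure is then covered by the standard left-invariant Sasaki structure on $\cN$ and is therefore locally homogeneous. The main obstacle in carrying this out is the averaging step: one must verify that the translation-averaged form is genuinely $\Delta$-invariant (not only $\Lambda$-invariant) and of positive type $(1,1)$, which follows once one exploits that the holonomy $\Delta/\Lambda$ normalises the translation torus and acts unitarily on $\CC^{k}$. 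With this in place, the family $\{\omega_{t}, J_{t}\}$ provides the required deformation through regular Sasaki structures, establishing Proposition \ref{pro:Sasaki_def} and thereby Corollary \ref{cor:solvable}.
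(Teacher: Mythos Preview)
Your proposal is correct and follows the same overall strategy as the paper: deform the K\"ahler base within its de Rham class to a flat (translation-invariant) form, lift the path to connection forms on the fixed circle bundle, and identify the endpoint as locally homogeneous.  The tactical differences are minor.  You produce the flat representative $\Omega_{1}$ by averaging over the torus cover; the paper instead first builds a locally homogeneous Sasaki model $\cN/\pi$ over $\CC^{k}/\Gamma$, transports it to $M$ via rigidity of Seifert fiberings, and then connects the two K\"ahler forms using the $\partial\bar\partial$-lemma.  Your averaging shortcut is a bit more direct and avoids the Seifert-rigidity citation.

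The one place where your writeup is lighter than the paper is the final identification.  Saying that the class of $\Omega_{1}$ ``determines a central extension \ldots\ into which $\Delta$ lifts'' is the right heuristic, but what you actually need is that the universal cover of $(M,\{\omega_{1},J_{1}\})$ is pseudo-Hermitian isometric to the Heisenberg Sasaki group $\cN$, so that the deck group $\pi_{1}(M)$ sits inside $\Psh(\cN)=\cN\rtimes\mathrm{U}(k)$.  This follows from the uniqueness of the contractible Sasaki structure over a given K\"ahler quotient (Proposition~\ref{pr:gauge} / Corollary~\ref{cor:hom_sasaki}), which the paper invokes explicitly at the end of its proof; you should cite it as well.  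With that reference in place your argument is complete.
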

\begin{proof} By the Boothby-Wang fibration result for compact regular Sasaki manifolds \cite{bw}, $M$ is a principal  circle bundle $S^{1} \to M \stackrel{q}{\to} N$ over a compact K\"ahler manifold $(N, \{\Omega, J\})$. Moreover, the K\"ahler class  $[ \Omega ] \in H^{2}(N, \RR)$ is integral and it is the image of the characteristic class $c(q) \in H^{2}(N, \ZZ)$ of the bundle.  Let $\pi$  denote the fundamental group of $M$. 
On the level of fundamental group 
the circle bundle gives  rise to a central group extension
 \begin{equation} \label{eq:ext_Gamma} 1 \to \ZZ \to \pi \to \Gamma \to 1 \end{equation}
such that its extension class in $H^{2}(\pi,\ZZ) \cong H^2(N,\ZZ)$ also maps to $[ \Omega ]$. 
(In this context, the Seifert circle bundle $M$ is said to realize the group extension \eqref{eq:ext_Gamma}.) 

Since $\Gamma$ is virtually solvable there exists a biholomorphic diffeomorphism $\Phi:   \CC^{k}/\, \Gamma \to (N, J) $. Since $\Lambda = \Gamma \cap \CC^{k}$ is a finite index subgroup of $\Gamma$ and a lattice in $\CC^{k}$, we can construct an embedding $\pi \to \cN \rtimes \mathrm{U}(k)$ such that $\Delta = \pi \cap \cN$ is a uniform discrete subgroup in  $\cN$, and the embedding induces a compatible map of exact sequences from \eqref{eq:ext_Gamma} to the defining exact sequence of the group $\Psh(\cN)$ which is of the form   
\begin{equation} 1 \to \RR \to \Psh(\cN) = \cN \rtimes \mathrm{U}(k) \to \CC^{k} \rtimes \mathrm{U}(k) \; . 
\end{equation} 
This constructs 
a locally homogeneous Sasaki structure on the quotient manifold $\cN \big/ \pi $ with K\"ahler quotient  $\CC^{k}/\, \Gamma$ and another Seifert circle bundle $S^{1} \to  \cN \big/ \pi  \to  \CC^{k}/\, \Gamma $ which realizes the exact sequence \eqref{eq:ext_Gamma}. 

By the rigidity for Seifert fiberings (\cf \cite{lr}) there exists an isomorphism of circle 
bundles $\Psi:    \cN \big/ \pi  \to M$ which induces the biholomorphic map 
$\Phi$ on the base spaces. This shows that the principal circle bundle $q: M \to N$ admits a compatible \emph{locally homogeneous} Sasaki structure $(M, \{ \omega', J' \})$ which is modeled on $\cN$ and has K\"ahler quotient $(N, \{ \Omega', J \})$, where $\Omega'$ is a flat (locally constant ) K\"ahler form on $(N,J)$. 

Moreover, by the above remarks  $[\Omega'] = [\Omega] \in H^{2}(N, \RR)$. Hence,  we can write $\Omega' =  \Omega + \theta$, where $\theta = J \partial \bar \partial \varphi$, for some potential function $\varphi: N \to \RR$.  (See  \cite[\S 11.C]{besse} for parametrization  of  
the space of K\"ahler forms on the complex manifold $(N,J)$, which is realizing the given 
K\"ahler class $[\Omega]$.) We may thus choose  a continuous path of cohomologous K\"ahler forms $\Omega_{t} =  \Omega + \theta_{t}$, $\theta_{0} =0$ and $\theta_1 = \theta$,  that is joining $\Omega$ and $\Omega'$, e.g.\ $\theta_{t} = t \theta$.   Since the forms $\theta_t$ are exact, we may lift to a continuous path of one-forms 
$\tau_{t} \in \Omega^{1}(N)$ which is satisfying $d \tau_{t} = \theta_{t}$. 

Finally, let $\om$ denote the connection form on the given circle bundle 
$q: M \to N$, which defines the given regular Sasaki structure with K\"ahler quotient $(N, \{ \Omega, J \})$.  
Then it follows that the connection forms $\om_{t} = \omega + q^{*} \theta_{t}$ give rise to  a continuous family of regular Sasaki structures $\{ \om_{t}, J_{t} \}$ compatible with the circle bundle $q$ and with K\"ahler quotients $(N, \{ \Omega_{t}, J \})$. It follows that $(M, \{ \om_{1}, J_{1} \})$ and $(M, \{ \omega', J'\})$ are Sasaki 
structures over the K\"ahler quotient  $(N, \{ \Omega', J \})$, with  $(M, \{ \omega', J'\})$ 
 being locally homogeneous. 
 The universal covering space $X$ of $M$ inherits the structure of  
 a principal $\RR$-bundle over the unitary space $\CC^{k}$ with induced Sasaki structures from 
 $\{ \om_{1}, J_{1} \}$ and $\{\om', J'\}$. The latter one being homogeneous with group $\Psh(X) \cong \Psh(\cN)$. 
 Proposition \ref{pr:gauge} shows that the induced structures on $X$ are equivalent Sasaki structures.
In particular, both are homogeneous Sasaki structures. 
This shows that $(M, \{ \om_{1}, J_{1} \})$ is a locally homogeneous Sasaki structure. 
\end{proof}

\section{Classifications of homogeneous Sasaki spaces} \label{sec:classifications} 
In this section we tackle the classification problems for (1) aspherical Sasaki homogeneous spaces of semisimple Lie groups and (2) contractible Sasaki Lie groups up to equivalence.


\subsection{Homogeneous Sasaki spaces of semisimple Lie groups} \label{sec:homogeneous_ss}
We call a connected semisimple Lie group $ S_{0}$  of non-compact type a Lie group of  \emph{Hermitian type} if
it is the identity component of the holomorpic isometry group of a 
symmetric bounded domain $D = S_0 / K_0$.

\begin{theorem}  \label{thm:semisimple_hom}
Let $X$ be a  contractible Sasaki homogeneous space of a semisimple Lie group 
$$  S \, \leq \, \Psh(X)^{0} \; . $$ 
Then $S$ has infinite cyclic center 
and 
$$X = S/K \; , $$
where $K$ is a maximal compact subgroup of $ S$. 
Moreover, $S$ is covering a Lie group $S_{0}$ of Hermitian type, such that:
\begin{enumerate} 
\item The K\"ahler quotient of $X$ is the symmetric bounded domain $$ D = S_0/ K_0 \; .$$ 
\item There exists a simply connected one parameter subgroup $\fA \leq S$, contained  in the centralizer of $K$, whose action on 
$X$ induces the Reeb flow,  and 
 the Boothby-Wang fibration for $X$ is 
 of the form 
$$    \fA  \,  \to  \; X = S/ K \;  \to \; D = X/\fA  = S \big/ K \fA \; . $$
\item If\/ $T$ denotes the Reeb flow for $X$ then $$ \Psh(X)^0 \; = \; T \cdot S \; , $$ and  $T \cap S =\Lambda$ is the center of $S$.
\end{enumerate} 
\end{theorem}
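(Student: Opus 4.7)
My plan is to reduce the theorem to Proposition \ref{pro:S_trans}, which already handles the case where the K\"ahler quotient is a symmetric bounded domain. First I would show that $W = X/T$ is such a domain. Contractibility of $X$ together with freeness of $T$ forces $T \cong \RR$ and $W$ contractible. The image $\phi(S) \leq \Isom_{\! h}(W)$ is a transitive connected unimodular subgroup of a contractible homogeneous K\"ahler manifold, so by Proposition \ref{pro:hom_Kaehler}, $W = \CC^k \times D$ with $D$ a bounded symmetric domain. Any semisimple subgroup of $\Isom_{\! h}(\CC^k) = \CC^k \rtimes {\rm U}(k)$ lies in the compact Levi factor ${\rm U}(k)$ and fixes $0 \in \CC^k$; transitivity of $\phi(S)$ on this factor therefore forces $k = 0$, giving $W = D$.

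Next I would establish $\phi(S) = S_0 := \Isom_{\! h}(D)^0$. Since $\phi(S)$ is a connected semisimple transitive subgroup of $S_0$ acting on the Hermitian symmetric space $D$, the classification of transitive subgroup actions on irreducible Hermitian symmetric spaces of non-compact type (Onishchik), applied factor by factor, yields $\phi(S) = S_0$. Hence $\phi|_S \colon S \to S_0$ is a Lie group covering with kernel $\Lambda := S \cap T$, a discrete central subgroup of $S$ sitting in $T \cong \RR$. To rule out the trivial kernel, I would replay the Koszul argument from the proof of Proposition \ref{pro:S_trans}: if $\Lambda = \{1\}$, then $S \cong S_0$ commutes with $T$ and $\Psh(X)^0 \supseteq T \times S_0$, so after identifying $X = \RR \times D$ the contact form would take the shape $\om = dt + q^*\theta$ with $\theta$ an $S_0$-invariant primitive of the K\"ahler form $\Omega$ on $D$, contradicting Koszul's theorem \cite{koszul} that $[\Omega] \in H^2(\s,\k)$ is non-zero. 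Therefore $\Lambda \cong \ZZ$; and since $Z(S_0) = \{1\}$, $\Lambda$ equals $Z(S)$.

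Claim (3) is then precisely Proposition \ref{pro:S_trans}, and for the description $X = S/K$ and the Boothby-Wang fibration I would take $\fA \leq S$ to be the closed $\RR$-subgroup covering the ${\rm U}(1)$-factor in $Z(K_0)$ that implements the Hermitian complex structure on $T_{eK_0}D$. The group $\fA$ is isomorphic to $\RR$ because $\phi|_\fA$ has kernel $\Lambda \cong \ZZ$; it centralizes the maximal compact $K \leq S$ and meets it trivially (because $\Lambda \subset T$ acts freely on $X$). Transitivity of $S$ and the dimension identity $\dim S - \dim K = \dim D + 1 = \dim X$ give $X = S/K$, and the right-multiplication action of $\fA$ on $S/K$ has free one-dimensional orbits which coincide with the fibers of $X \to S/K\fA = D$; comparing tangent directions identifies this action (up to reparametrization) with the Reeb flow. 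The main obstacle is the identification $\phi(S) = S_0$, which rests on the nontrivial classification of transitive semisimple subgroup actions on Hermitian symmetric spaces of non-compact type; once this is in place, everything else is just organizing the consequences of Proposition \ref{pro:S_trans}.
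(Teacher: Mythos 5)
Your proposal is correct in its overall architecture and reaches all assertions of the theorem, but it takes a genuinely different route through the key identification step. The paper never analyzes $\phi(S)$ inside $\Isom_h(W)$ directly; instead it observes that, since a semisimple $S$ has discrete center, the Boothby--Wang presentation of Section \ref{sec:presentations} cannot be of type \eqref{eq:sasakifibKI} (there $\fA = T \cong \RR$ would be a central one-parameter subgroup of $S$), hence is of type \eqref{eq:sasakifibKII}; this yields in one stroke the fibration $\fA \to S/K \to S/K\fA$, the covering $S \to S_0 = S/\Lambda$ with $\Lambda = S \cap T$ discrete and central, and --- crucially --- a \emph{faithful} transitive action of the unimodular group $S_0$ on $W$, to which Proposition \ref{pro:hom_Kaehler} (whose proof rests on Hano's theorems) is applied to get $W = D$ symmetric and $S_0 = \Isom_h(D)^0$ simultaneously. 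You instead apply Proposition \ref{pro:hom_Kaehler} to $\phi(S)$, kill the flat factor by the fixed-point argument (which is sound: by Whitehead vanishing a connected semisimple subgroup of $\CC^k \rtimes \mathrm{U}(k)$ is conjugate into $\mathrm{U}(k)$), and then invoke Onishchik to conclude $\phi(S) = S_0$. That last step is the one place to be careful: Onishchik's classification concerns transitive actions on \emph{compact} homogeneous spaces, and a transitive semisimple subgroup of a reducible $D$ need not split along the de Rham factors, so ``applied factor by factor'' is not by itself a proof; what is actually needed is the factorization statement $S_0 = \phi(S)\, K_0$ for symmetric spaces of non-compact type, or --- as the paper arranges --- Hano's theorem that a faithful transitive unimodular group of holomorphic isometries of a bounded domain is semisimple and coincides with the full identity component (this is exactly what Proposition \ref{pro:hom_Kaehler} and Corollary \ref{cor:hom_Kaehler} encapsulate). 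So your route is correct but rests on a heavier external result whose reduction to the reducible case you have not supplied, while the paper's route is self-contained. Your replay of the Koszul argument to exclude $\Lambda = \{1\}$ is sound and in fact welcome, since the Case II discussion alone does not explicitly rule out a trivial $\Delta$; the paper delegates precisely this point to Proposition \ref{pro:S_trans}, which also supplies your claim (3) once one checks that the given transitive $S$ coincides with the normal Levi subgroup constructed there (immediate, e.g., because $\Psh(X)^0 = T \cdot S$ with $T$ central forces $S = [\Psh(X)^0, \Psh(X)^0]$). Your final assembly of $X = S/K$ and of the Reeb flow as right multiplication by $\fA$ matches the paper's Section \ref{sec:bwhomogeneous} and the dimension count in Proposition \ref{pro:S_trans}.
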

\begin{proof} Given a Sasaki  metric on $X$ which is homogeneous for the semisimple group $G= S$, 
the Boothby-Wang presentation  of the  K\"ahler quotient $W$ must be of type  \eqref{eq:sasakifibKII} (\cf Section \ref{sec:presentations}).
That is, it is  of the form $$ {\sf A} \,  \to S/K \,  \to \;  W = S \big/ K {\sf A} = S_0/K_0 \, . $$
Moreover, ${\sf A} \leq S$  is a one-parameter subgroup   centralizing $K$, and $S \to S_0$ is
a  covering homomorphism with infinite cyclic kernel $\Lambda$. In particular, $W = X/\fA$ is contractible and it is a faithful K\"ahler homogeneous space 
of the semisimple Lie group $S_{0} = S/\Lambda$. By Proposition \ref{pro:hom_Kaehler},  $W = D$ is K\"ahler isometric to a
bounded symmetric domain $D$, and $S_{0}$ is the identity component of the isometry group of $D$. In particular, $S_{0}$ is a semisimple Lie group of Hermitian type, and $D= S_0/K_0$, where $K_0$ is maximal compact in $S_0$. Moreover,  $S_{0}$ has trivial center. Therefore,  the center of 
$S$ coincides with  the kernel $\Lambda$ of $S \to S_{0}$, which is infinite cyclic.  
\end{proof} 

The following complements Theorem \ref{thm:semisimple_hom} by showing that any symmetric bounded domain $D$ is the K\"ahler quotient of a contractible Sasaki homogeneous space for 
a semisimple Lie group $S$: 

\begin{theorem}  \label{thm:semisimple_hom2}
For any symmetric bounded domain $D = S_{0}/K_{0}$, there exists a unique semisimple 
Lie group $S$ with infinite cyclic center, which is covering $S_{0}$ and gives rise to a contractible 
Sasaki homogeneous space 
$$ X_{S} = S/K \, $$ 
with K\"ahler quotient $D$.  
\end{theorem}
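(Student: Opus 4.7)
The plan is to derive this theorem as a direct synthesis of two previously established results: the existence/uniqueness of a contractible homogeneous Sasaki manifold over a contractible homogeneous K\"ahler manifold (Corollary \ref{cor:hom_sasaki}), and the semisimple structure theorem for the pseudo-Hermitian automorphism group when the K\"ahler quotient is a bounded symmetric domain (Proposition \ref{pro:S_trans}).

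For existence, I would begin with the symmetric bounded domain $D = S_{0}/K_{0}$ together with its $S_{0}$-invariant K\"ahler form. By Corollary \ref{cor:hom_sasaki}, there exists a contractible homogeneous Sasaki manifold $X$ with K\"ahler quotient $D$, unique up to pseudo-Hermitian isometry; since $D$ is contractible, its Boothby-Wang bundle has fiber $T = \RR$. Apply Proposition \ref{pro:S_trans} to $X$ to extract a closed semisimple subgroup $S \leq \Psh(X)^{0}$ which acts transitively on $X$ and for which the Boothby-Wang homomorphism $\phi \colon S \to S_{0}$ is a covering with infinite cyclic kernel $\Lambda = Z(S) = S \cap T$. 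Letting $K$ be the isotropy subgroup of $S$ at a chosen base point, one gets $X = S/K$; a dimension count using $\dim X = 1 + \dim D$ and the inclusion $\fA \subseteq S$ of the Reeb direction then shows that $K$ has codimension one in the preimage of $K_{0}$, hence $K$ is a maximal compact subgroup of $S$. This yields the desired model $X_{S} = S/K$.

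For uniqueness, suppose $S'$ is another semisimple Lie group with infinite cyclic center, covering $S_{0}$, such that $X' = S'/K'$ is a contractible Sasaki homogeneous space with K\"ahler quotient $D$. The uniqueness clause of Corollary \ref{cor:hom_sasaki} furnishes a pseudo-Hermitian isometry $X' \cong X$, under which $S'$ embeds as a closed semisimple subgroup of $\Psh(X)^{0}$. Now Proposition \ref{pro:pshX_dec} specialized to $k=0$ (since the flat factor of $W = D$ is absent) gives the almost direct product $\Psh(X)^{0} = T \cdot S$ with $T \cap S = \Lambda \cong \ZZ$. Since $S$ and the image of $S'$ are both maximal semisimple (Levi) subgroups of $\Psh(X)^{0}$ covering $S_{0}$, they are conjugate inside $\Psh(X)^{0}$ by Malcev's theorem on Levi decompositions. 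Moreover, the isomorphism type of any such cover is determined by the subgroup of $\pi_{1}(S_{0})$ to which its kernel corresponds, and this subgroup is forced on both $S$ and $S'$ to be the infinite cyclic subgroup obtained by pulling back the central circle in $K_{0}$ to the Reeb line $T$. Hence $S' \cong S$ as covers of $S_{0}$, establishing the asserted uniqueness.

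The technical core lies entirely in the uniqueness argument, specifically in pinning down which cover of $S_{0}$ can appear. The possible obstruction is that $\pi_{1}(S_{0})$ could a priori admit several infinite cyclic subgroups, but the Reeb flow condition selects exactly the one that lifts the central circle of $K_{0}$ (whose existence is guaranteed by the Hermitian type of $S_{0}$) to a genuine $\RR$. Once this is articulated cleanly, every other assertion in the theorem has already been secured by Corollary \ref{cor:hom_sasaki} and Proposition \ref{pro:S_trans}, and the verification of the Addendum (maximality of $K$, faithful action of $S$, and K\"ahler structure on $D$) follows from Theorem \ref{thm:semisimple_hom} together with the classical structure of Hermitian symmetric spaces.
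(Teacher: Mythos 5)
Your proposal is correct and follows essentially the same route as the paper: existence and uniqueness of the contractible Sasaki space $X$ over $D$ via Corollary \ref{cor:hom_sasaki}, extraction of the transitive semisimple subgroup $S$ with infinite cyclic kernel via Proposition \ref{pro:S_trans}, and uniqueness of $S$ inside $\Psh(X)^{0}$. The only cosmetic difference is that the paper settles the last step by citing Theorem \ref{thm:semisimple_hom}(3) (any transitive semisimple subgroup of $\Psh(X)$ coincides with $S$), whereas you invoke the decomposition $\Psh(X)^{0} = T \cdot S$ from Proposition \ref{pro:pshX_dec} together with Levi conjugacy --- which, since the radical $T$ is central, even forces equality rather than mere conjugacy, making your closing discussion of which subgroup of $\pi_{1}(S_{0})$ occurs redundant but harmless.
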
 
\begin{proof} Let $X$ be the \emph{unique} contractible  Sasaki homogeneous space over $D$, which exists by Corollary \ref{cor:hom_sasaki}.  By Proposition \ref{pro:S_trans}, the maximal  normal semisimple subgroup $S \leq \Psh(X)$ is acting transitively on  $X$, and it is covering $S_{0}$ with infinite cyclic kernel. By Theorem \ref{thm:semisimple_hom} (3), any transitive semisimple Lie subgroup of $\Psh(X)$ coincides with $S$. 
\end{proof}

Dividing out the center of $S$ gives rise to a homogeneous Sasaki manifold  $$  Y_0 = X \big/ \Lambda $$ whose Reeb
flow is a circle group. This shows that any  semisimple Lie group of Hermitian type is 
actually acting transitively on an associated Sasaki homogeneous space: 

\begin{corollary} \label{cor:semisimple_hom} For any semisimple Lie group $S_0$ of Hermitian type, there exists a unique 
Sasaki homogeneous space  
$$ Y_0 = S_{0}/K_{1}  $$ 
with 
K\"ahler quotient $D = S_0/K_0$. In this situation, the following hold: 
\begin{enumerate}
\item  There exists a circle group $\bar \fA \leq K_{0}$ such that
$K_{0} =  \bar \fA  \times K_{1}$ is a maximal compact subgroup of $S_{0}$. 
\item The Reeb flow $T_0$ for the Sasaki space $Y_0$  is isomorphic to a circle group $S^1$ and $$ \Psh(Y_{0}) \;  \cong \;  S_{0} \times T_0 \; . $$  
\end{enumerate}
Moreover, every Sasaki homogeneous space with K\"ahler quotient $D$ is a covering space of\/ $Y_{0}$.  
\end{corollary}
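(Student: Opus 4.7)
My plan is to realize $Y_{0}$ as the quotient $X_{S}/\Lambda$ of the contractible Sasaki homogeneous space $X_{S}=S/K$ from Theorem \ref{thm:semisimple_hom2}, where $\Lambda=Z(S)=T\cap S$ is the infinite cyclic center of $S$ sitting inside the Reeb flow $T\cong\RR$. Since $\Lambda$ is a uniform lattice in $T$, its action on $X_{S}$ is free and proper along Reeb orbits, so $Y_{0}$ would inherit a regular Sasaki structure with compact Reeb flow $T_{0}=T/\Lambda\cong S^{1}$ and the same K\"ahler quotient $D$. Effectivity of the $S$-action on $X_{S}$ forces $K\cap\Lambda=\{1\}$, which identifies $Y_{0}=S/(K\Lambda)=S_{0}/K_{1}$, where $K_{1}$ is the image of $K$ in $S_{0}=S/\Lambda$.

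For (1), I would read off the structure from the Boothby--Wang fibration of $Y_{0}$ in the form $\bar{\fA}\to S_{0}/K_{1}\to S_{0}/K_{0}$, where $\bar{\fA}$ is the image of the Reeb-lifting one-parameter subgroup $\fA\leq S$ from Theorem \ref{thm:semisimple_hom} (2). Hence $K_{0}=K_{1}\bar{\fA}$. The two factors commute because $\fA$ centralizes $K$ in $S$, and freeness of the Reeb $S^{1}$-action on the regular Sasaki manifold $Y_{0}$ forces $\bar{\fA}\cap K_{1}=\{1\}$: any element of the intersection would be a Reeb flow element fixing the base-point coset $K_{1}$ in $S_{0}/K_{1}=Y_{0}$. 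This yields $K_{0}=\bar{\fA}\times K_{1}$.

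For (2), I would apply Theorem \ref{thm:semisimple_hom} (3) (equivalently Proposition \ref{pro:pshX_dec}) to get $\Psh(X_{S})^{0}=T\cdot S$ with $T\cap S=\Lambda$, and push down along the covering $X_{S}\to Y_{0}$: since $\Lambda$ is central in $\Psh(X_{S})$, the quotient exists and yields
\[
\Psh(Y_{0})^{0}\;\cong\;(T\cdot S)/\Lambda\;=\;(T/\Lambda)\times(S/\Lambda)\;=\;T_{0}\times S_{0},
\]
a genuine direct product now that the shared kernel $\Lambda$ has been factored out. For the closing covering statement, I would invoke Corollary \ref{cor:hom_sasaki} to identify the universal cover of any Sasaki homogeneous $Z$ with K\"ahler quotient $D$ with $X_{S}$; since deck transformations preserve the Sasaki structure and project trivially to $D$, they lie in $T$, so $Z=X_{S}/\Gamma$ for some discrete $\Gamma\leq T$, and the map $Z\to Y_{0}$ is induced by the inclusion $\Gamma\subseteq\Lambda$.

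The main obstacle is this very last step: an arbitrary discrete $\Gamma\leq T\cong\RR$ already yields a Sasaki homogeneous quotient of $X_{S}$ with K\"ahler quotient $D$, and such a $\Gamma$ need not be contained in $\Lambda$. I would resolve this by imposing the natural normalization implicit in the corollary --- namely that the transitive group realizing the Sasaki structure on $Z$ is the specific group $S$ of Theorem \ref{thm:semisimple_hom2} (equivalently, that the Reeb period on $Z$ divides that of $Y_{0}$) --- which forces $\Gamma\subseteq\Lambda$ and makes the covering $Z\to Y_{0}$ finite of degree $[\Lambda:\Gamma]$.
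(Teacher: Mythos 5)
Your construction of $Y_{0}$ and your arguments for (1) and (2) follow essentially the same route as the paper: its proof also takes the unique contractible space $X=S/K$ of Theorem \ref{thm:semisimple_hom2}, notes via Theorem \ref{thm:semisimple_hom}\,(3) that $\Lambda=Z(S)$ lies in the Reeb flow $T$, hence acts freely and properly discontinuously, and sets $Y_{0}=X/\Lambda$ with $T_{0}=T/\Lambda\cong S^{1}$; your extra details (effectivity forcing $K\cap\Lambda=\{1\}$, pushing $\Psh(X)^{0}=T\cdot S$ with $T\cap S=\Lambda$ down to the direct product $T_{0}\times S_{0}$) are exactly the intended appeals to Proposition \ref{pro:pshX_dec} and Proposition \ref{pro:S_trans}, and your first step for the covering claim (universal cover $=X$ by Corollary \ref{cor:hom_sasaki}, deck group $\Gamma\leq T$ since deck transformations lie in the kernel of the Boothby--Wang homomorphism) is the paper's argument verbatim.

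The problem is your repair of the last step. You are right that passing from ``$Z$ is a quotient of $X$ by some discrete $\Gamma\leq T$'' to ``$Z$ covers $Y_{0}$'' needs $\Gamma\subseteq\Lambda$, and the paper's one-line proof does not spell this out. But your first proposed normalization --- that the transitive group on $Z$ be the specific group $S$ --- imposes no restriction whatsoever: since the Reeb flow is central in $\Psh(X)$ (see \eqref{CPsh}), $S$ centralizes $T\supseteq\Gamma$ and therefore descends to a \emph{transitive} action on every quotient $X/\Gamma$ with $\Gamma\leq T$ discrete, indeed an effective one whenever $\Gamma\cap\Lambda=\{1\}$; so this condition is not equivalent to $\Gamma\subseteq\Lambda$ and rescues nothing. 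Your second formulation is also inverted: a pseudo-Hermitian map preserves $\om$ and hence Reeb periods, so a Sasaki covering $X/\Gamma\to Y_{0}=X/\Lambda$ exists precisely when $\Gamma=m\Lambda\subseteq\Lambda$ for some $m\geq 1$, which says that the Reeb period of $Z$ is an integer \emph{multiple} of that of $Y_{0}$ (covering spaces have longer period), not a divisor of it as you wrote. With that correction your degree count $[\Lambda:\Gamma]=m$ is right, and the statement your method actually yields is: the Sasaki homogeneous spaces over $D$ that cover $Y_{0}$ are exactly $X$ and the quotients $X/m\Lambda$, while quotients $X/\Gamma$ with $\Gamma\not\subseteq\Lambda$ (e.g.\ rescaled lattices in $T$) must be excluded by whatever normalization one adopts --- a point the paper glosses over but which your proposal, as written, does not correctly pin down.
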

\begin{proof} Consider the unique contractible Sasaki homogeneous space $X$ over $D= S_0/K_0$. Then $X = S/K$, where the semisimple group $S$ admits a covering $S \to S_0$ with kernel $\Lambda$, the center of $S$. By part (3) of Theorem \ref{thm:semisimple_hom}, $\Lambda$ is contained in the Reeb flow $T$ for $X$. Therefore $\Lambda$ is acting properly discontinously and freely on $X$, and $Y_0 = X/\Lambda$ is a homogeneous Sasaki space for $S_0$, which has Reeb flow $T_0 = T/\Lambda = S^1$. Since $X$ is the unique simply connected Sasaki homogeneous space with K\"ahler quotient $D$, any Sasaki homogeneous space over $D$ is a quotient space of $X$, hence such a homogeneous space is covering $Y_0$.
\end{proof}

\subsection{Sasaki Lie groups} 
A  Lie group $G$ is said to be a \emph{Sasaki group}
if $G$ admits a left-invariant Sasaki structure (respectively, standard pseudo-Hermitian structure) 
$\{ \om, J \}$.  Accordingly, any simply transitive pseudo-Hermitian 
action of $G$ on a Sasaki space $X$ determines a unique 
left-invariant Sasaki structure on $G$ up to isomorphism. Two Sasaki Lie groups $G$ and $G'$ are considered to be \emph{equivalent Sasaki Lie groups} if there exists an isomorphism $G \to G'$ which is a pseudo-Hermitian isometry. {Two Sasaki Lie groups acting on $X$ are equivalent if and only if they are conjugate subgroups of $\Psh(X)$.}

\subsubsection{Sasaki Heisenberg groups $\cN$}
Let $X$ be the contractible homogeneous Sasaki manifold over $\CC^{k}$. That is, 
we assume that the Reeb fibering for $X$  is of the form 
\begin{equation*} \label{eq:Sasaki_fib_spaces_N}
\RR \;  \to \,  X \, \stackrel{q}{\to} \;  \CC^{k}\; . 
\end{equation*}
By (2)  of Proposition \ref{prop:Sasaki_homuni},  the $2k$-dimensional Heisenberg group 
$$ \cN \,  \leq \, \Psh(X) $$ 
is the preimage of the translation subgroup $\CC^{k} \leq \Isom_{\! h}(\CC^{k})$.   
Moreover, $\cN$ acts simply transitively on $X$. Therefore, we 
get that $\cN$  is a Sasaki Lie group, which as a space is isometric to $X$ by a pseudo-Hermitian isometry. 
We also deduce that   $$ \Psh(\cN)= \Psh(X) = \cN\rtimes {\rm U}(k)$$  
is a connected Lie group. (Compare also 
 \cite{kt}, for example.)

\smallskip 
We describe the standard Sasaki structure on $\cN$ more explicitly as follows: 

\begin{exam}[Sasaki Heisenberg group $\cN$] \label{ex:SasakiN} 
Let $\cN = \RR\times \CC^k$ be the $2 k+1$-dimensional Heisenberg group ($k \geq 0$). 
We write the group law on  $\cN$ as 
\begin{equation} \label{eq:glN} \displaystyle (t,z)(s,w)=(t+s-{\rm Im}({}^t\bar zw), z+w). 
\end{equation}
The standard pseudo-Hermitian structure $\{\om_0, J\}$ on
$\cN$ is given by the left-invariant contact one-form 
$$ \displaystyle \om_0=dt+{\rm Im}({}^t\bar zdz)  \; ,$$
together with a left-invariant complex stucture $J$, defined on ${\rm ker}\, \om_0$ by the relation
$$ {\sf q}_{*}\circ J=J_\CC\circ {\sf q}_{*} \; . $$  
Here  $J_\CC$ denotes the standard complex structure of  $\CC^n$, 
$q: \cN \to \CC^{n}$ is the natural projection. 
Then $g_0=\om_0\cdot \om_0+d\om_0\circ J$ 
is the positive definite Sasaki metric on $\cN$. 
\end{exam} 

We calculate the isometry group of the Sasaki group $\cN$ explicitly as follows:
\begin{note}[Isometry group of $\cN$] 
\label{note:isomN} 
Consider the semidirect product group 
$$ \mathop{\Sim}({\mathcal N}) =  {\mathcal N}\rtimes ({\rm U}(k)\times \RR\sp
{+}) \; ,$$  
where 
$ {\rm U}(k)\times \RR\sp{+}$ is contained in ${\rm Aut}(\cN)$. 
The action of  $(A,\lambda)\in {\rm U}(k)\times \RR\sp{+}$ on $\mathcal N$ is 
given  by:
\begin{equation*}
(A,\lam) \,  (t,z) = (\lam^{2} t,\ \lam A z) \; .
\end{equation*}
It follows that $(A,\lam)^{*} \,  \omega_{0} =  \, \lambda^{2} \, \omega_{0}$. In particular, ${\rm U}(k)$ acts by strict contact transformations and holomorphically on the standard pseudo-Hermitian manifold 
$(\cN,\{\omega_{0},J\})$. That is, ${\rm U}(k)$ is a subgroup of $\Psh(\cN)$.  
Next define $\tau \in \Aut(\cN)$ by \begin{equation} \label{eq:tau}  \tau(t,z)=(-t,\bar z) \; . \end{equation} 
Then $\tau^*\om_0=-\om_0$ 
and $J\circ\tau_*=-\tau_*\circ J$. Thus $$ \langle \tau\rangle =\ZZ_2 \, \leq \, \Pspm(\cN) $$ 
is contained in the isometry group of the Sasaki metric $g_{0}$, but does not belong to $\Psh(\cN)$.
Observe further that $$ {\rm U}(k)\rtimes \langle \tau\rangle$$  is a maximal compact subgroup of the automorphism group $\Aut(\cN)$.  
We deduce: 
\begin{equation} \label{eq:isoN}
 \Psh(\cN) =  {\mathcal N}\rtimes {\rm U}(k) \text{ and }  \Isom(\cN) =  \Pspm(\cN) = \Psh(\cN) \rtimes \ZZ_2  \; . 
\end{equation}
(Recall also that by \cite{wilson},  the isometry group  of any  left-invariant Riemannian metric on $\cN$ is contained in the group of affine transformations $\cN \rtimes \Aut(\cN)$.)
\end{note}

We prove now that the Sasaki Lie group structure on the Heisenberg Lie group $\cN$ is essentially unique: 

\begin{pro} \label{pro:heisenberg_unique}
Up to isomorphism of Sasaki Lie groups,  there is a unique Sasaki structure on the 
Heisenberg Lie group  $\cN$. 
\end{pro}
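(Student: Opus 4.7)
The plan is to show that any left-invariant Sasaki structure $\{\om, J\}$ on $\cN$ is isomorphic to the standard one $\{\om_0, J_0\}$ of Example \ref{ex:SasakiN} via a Lie group automorphism. I will carry this out in three normalization steps, each realized by a natural subgroup of $\Aut(\cN)$.

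First, I identify the Reeb direction. With the standard basis $\{Z, X_1, Y_1, \ldots, X_k, Y_k\}$ of $\n$, where $\z = \RR Z$ is the center and $[X_i, Y_j] = \delta_{ij} Z$, the contact condition forces $\om(Z) \neq 0$: any left-invariant two-form $d\om(V,W) = -\om([V,W])$ would otherwise vanish identically, because all brackets lie in $\z$. A direct computation then shows the Reeb field is the central vector $\om(Z)^{-1} Z$. Using the $\RR^+$-factor in $\Aut(\cN)$ that rescales $Z$ (cf.\ $\mathop{\Sim}(\cN)$ in Note \ref{note:isomN}), I can normalize so that $\om(Z) = 1$.

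Second, I normalize the contact form itself to $\om_0 = Z^*$. Writing $\om = Z^* + \beta$ with $\beta$ a linear form vanishing on $\z$, the central shift $\phi \in \Aut(\n)$ defined by $\phi(V) = V - \beta(V)\, Z$ for $V \in \mathrm{span}(X_i, Y_i)$ and $\phi(Z) = Z$ is a Lie algebra automorphism (the bracket is unaffected since $Z$ is central) satisfying $\phi^*\om = \om_0$. After this adjustment $\om = \om_0$, and $J$ becomes a left-invariant complex structure on $\ker \om_0 \cong \RR^{2k}$ compatible with $d\om_0$, which by the bracket relations agrees up to sign with the standard symplectic form $\omega_{\mathrm{std}}$ on $\RR^{2k}$.

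Third, since $\mathrm{Sp}(2k,\RR)$ acts transitively on the space of $\omega_{\mathrm{std}}$-compatible positive complex structures on $\RR^{2k}$, there exists $A \in \mathrm{Sp}(2k,\RR)$ with $A\, J_0\, A^{-1} = J$. Extending $A$ to $\n$ by $A(Z) = Z$ yields a Lie algebra automorphism (indeed, $[AV, AW] = \omega_{\mathrm{std}}(AV, AW)\, Z = \omega_{\mathrm{std}}(V,W)\, Z = [V,W]$) which preserves $\om_0$ and intertwines $J_0$ with $J$, giving the desired isomorphism of Sasaki Lie groups. I expect the main technical burden to be the bookkeeping needed to verify that the three normalizations are implemented by genuine elements of $\Aut(\cN)$ and that they compose properly; this amounts to recognizing $\Aut(\cN)$ as an extension of $\mathrm{CSp}(2k,\RR)$ by the central shifts $\Hom(\n/\z, \RR)$. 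A more conceptual alternative is to invoke Corollary \ref{cor:hom_sasaki}: any Sasaki structure on $\cN$ has K\"ahler quotient a flat homogeneous K\"ahler structure on $\CC^k$, all such structures are equivalent, hence any two Sasaki structures on $\cN$ are pseudo-Hermitian isometric; such an isometry sending the identity to itself must be a Lie group isomorphism because $\cN$ is the nilradical, and therefore a characteristic subgroup, of $\Psh(\cN) = \cN \rtimes \mathrm{U}(k)$.
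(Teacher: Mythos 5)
Your three-step normalization is correct in substance, and it is a genuinely different proof from the one in the paper. The paper argues globally rather than infinitesimally: it passes to the Boothby--Wang fibration, invokes Hano's theorem \cite{jh} to conclude that the K\"ahler quotient of the nilpotent transitive group $\cN$ must be flat, identifies the Boothby--Wang image of $\cN$ with the translation group $\CC^{k}$ (the unique abelian simply transitive subgroup of $\CC^{k}\rtimes \mathrm{U}(k)$), so that $\cN$ is the preimage of $\CC^{k}$ in $\Psh(X)$, and then combines the uniqueness of the contractible homogeneous Sasaki space over $\CC^{k}$ (Corollary \ref{cor:hom_sasaki}, resting on Propositions \ref{pr:oductiden} and \ref{pr:gauge}) with the fact that $\cN$ is the nilradical, hence characteristic, in $\Psh(X)=\cN\rtimes \mathrm{U}(k)$ (Proposition \ref{pro:pshX_dec}). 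So your closing ``conceptual alternative'' is essentially the paper's own proof --- note that it does need Hano's theorem for flatness of the quotient, which your sketch asserts without justification. What the two routes buy: your main argument is elementary and self-contained, works entirely inside $\Aut(\cN)$, produces an explicit normal form, and makes visible that what is being collapsed is exactly the Siegel space of $d\om_{0}$-compatible positive complex structures (on which $\mathrm{Sp}(2k,\RR)$ acts transitively) together with the central shifts; the paper's route is less explicit but is the template that also handles the modifications $\cN(k,l)$ (via Lemma \ref{lem:cklback}), where a bare-hands normalization would be considerably more painful.

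One fixable gap in your step one: the $\RR^{+}$-factor of $\Sim(\cN)$ acts by $(t,z)\mapsto(\lam^{2}t,\lam z)$, hence scales $Z$ by $\lam^{2}>0$ and can only achieve $\om(Z)=1$ when $\om(Z)>0$. The case $\om(Z)<0$ genuinely occurs: the conjugate structure $\{-\om_{0},-J_{0}\}$ is again a left-invariant Sasaki structure (its Levi form is $d(-\om_{0})\circ(-J_{0})=d\om_{0}\circ J_{0}>0$) with Reeb field $-\cA$. You must first compose with the automorphism $\tau(t,z)=(-t,\bar z)$ of \eqref{eq:tau}, which sends $Z\mapsto -Z$ and preserves Levi-form positivity under pullback --- equivalently, enlarge the linear factor of $\Aut(\cN)$ to include anti-symplectic maps. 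With that patch the rest is sound: the central shift $\phi(V)=V-\beta(V)Z$ is an automorphism precisely because all brackets land in $\z$; the Hermitian symmetry of the Levi form forces $d\om_{0}(J\cdot,J\cdot)=d\om_{0}(\cdot,\cdot)$, so after step two $J$ really does lie in the Siegel space for $d\om_{0}\vert_{\ker\om_{0}}$; and since $\cN$ is simply connected, your Lie algebra automorphisms integrate to group automorphisms matching the left-invariant structures, i.e.\ to isomorphisms of Sasaki Lie groups in the paper's sense (the standardness of the structures is automatic here, the Reeb field being central).
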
 
\begin{proof} Suppose $(\cN, \{\om, J\})$ is a Sasaki Lie group of dimension $2k+1$.  
In particular, the space 
$X = \cN$  is a contractible homogeneous Sasaki manifold, on which the group $\cN$ acts simply transitively. 
Via the Boothby-Wang homomorphism,
 $\cN$ also acts  transitively on the K\"ahler quotient $W = X \big/ \RR$. Since $\cN$ is nilpotent, 
 $W$ must be flat (for example by \cite{jh}). So $W$ is K\"ahler isometric to $\CC^{k}$. 

Then, as follows from Section \ref{sec:presentations}, we must be in the situation Case II, 
where the Reeb flow $T$ coincides with the center of $\cN$. 
Therefore, the Boothby-Wang homomorphism for $X$ maps $\cN$ to an abelian simply transitive 
subgroup $\bar \cN$ of isometries of unitary space $\CC^{k}$. We conclude that this image group $\bar \cN$ is actually the translation group $\CC^{k}$, which is the unique abelian simply transitive subgroup 
of  $\CC^{k} \rtimes {\rm U}(k)$. 
Therefore, $\cN$ is the normal subgroup of $\Psh(X)$ which is the preimage of $\CC^{k}$. 
Now the Sasaki manifold $X$ is determined uniquely by its K\"ahler quotient $\CC^{k}$  (\cf Corollary \ref{cor:hom_sasaki}) 
up to a pseudo-Hermitian isometry. {By Proposition \ref{pro:pshX_dec}, $\cN$ is the nilradical of $\Psh(X)$. Therefore, it is uniquely determined and characteristic in $\Psh(X)$.} Since the space $X$ is determined 
uniquely by $\CC^{k}$, this constructs the left-invariant structure on $\cN$  uniquely  up to a pseudo-Hermitian isomorphism of Sasaki Lie groups. 
\end{proof}

\subsubsection{Heisenberg modifications $\cN(k,l)$} \label{sec:Hmod}
We construct a family of simply connected Sasaki Lie groups which are  modifications of 
the Heisenberg Sasaki group $\cN$ introduced in Example \ref{ex:SasakiN}. (Compare also \cite{ahk}).

\smallskip 
\paragraph{\em Flat K\"ahler Lie groups} 
For this, let  $\rho:\CC^l\to {\rm U}(k)$ be a non-trivial homomorphism $(k+l=n)$.
Then the semidirect product $\displaystyle\CC^k{\rtimes}_{\rho}^{}\CC^l$ embeds in 
an obvious manner as a simply transitive subgroup 
$$  \CC(k,l)\leq \CC^n\rtimes {\rm U}(n)   $$ 
of the holomorphic isometry group of flat unitary space $\CC^{n}$.
Thus $\CC(k,l)$ is a \emph{flat K\"ahler group}, since it is  acting simply transitively by holomorphic isometries on  $\CC^n$. (In fact,  every flat K\"ahler Lie group contained in 
$\CC^n\rtimes {\rm U}(n)$ is conjugate to some $\CC(k,l)$, 
compare \cite[Theorem II]{jh}.) Note also that $k \geq 1$ and that the standard K\"ahler form of $\CC^{n}$ is non-degenerate on $\CC^k$. 

\smallskip 
\paragraph{\em Heisenberg modifications}
Let $X$ be \emph{the} unique contractible Sasaki homogeneous space over $\CC^{n}$. Consider the pull-back $\cN(k,l)$ of $\CC(k,l)$ in the central extension which is defining $\Psh(X)$ according to Proposition \ref{pro:pshX_dec}: 
\smallskip 
\begin{equation} \label{eq:cnkl}
\begin{CD}
1@>>> \RR@>>>  \Psh(X) = \cN\rtimes  {\rm U}(n) @>p>> \CC^n\rtimes  {\rm U}(n) @>>>1\\
@. ||@. \cup@. \cup@. \\
1@>>> \RR@>>> \cN(k,l)@>p>> \CC(k,l)@>>>1 \; .
\end{CD}
\end{equation} 

\smallskip 
In particular, such  $\cN(k,l)$ is a simply connected solvable Lie group (where $\cN(n,0)=\cN$ is nilpotent). 
Moreover, 
$$   \cN(k,l) \,  \leq \,  \Psh(\cN) = \cN \rtimes  {\rm U}(n)  $$ acts simply transitively and by pseudo-Hermitian transformations on the Sasaki manifold $X = \cN$. 
From this action, $\cN(k,l)$ inherits a natural  
structure as a Sasaki Lie group.  

\begin{definition}\label{modicN}
Any  Sasaki group of the form  $\cN(k,l) \leq \Psh(\cN)$ as above is said to be a Heisenberg modi\-fication (of type $(k,l)$).
\end{definition}

\begin{remark} By definition, the groups $\cN(k,l)$ are defined as preimage of K\"ahler Lie groups. The proof of  Proposition \ref{pro:heisenberg_unique} shows that the classification of groups $\cN(k,l)$ up to isomorphism of Sasaki Lie groups amounts \emph{exactly} to the classification of 
K\"ahler Lie groups $\CC(k,l)$ up to isomorphism. {For a discussion of the structure of flat K\"ahler Lie groups,  
see for example \cite{bdf} and~\cite{jh}.}  
\end{remark} 

Also we note: 
\begin{lemma}  \label{lem:cklback} 
Let $X$ be any contractible Saskaki manifold over a  homogeneous K\"ahler manifold $W$. 
If\/ $\CC^{n}$ is the maximal flat factor of\/  $W$ then the preimage $\tilde \cN$ in $\Psh(X)$ of a subgroup 
$$ \CC(k,l) \leq \CC^n\rtimes {\rm U}(n) $$ 
under the homomorphism $\phi$  in the sequence \eqref{eq:Sasaki_fib_groups} is $\cN(k,l)$. 
\end{lemma}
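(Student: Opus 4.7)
\medskip
\noindent\textbf{Proof plan.}
My plan is to realize $X$ as the join of the Sasaki Heisenberg manifold $\cN$ with a second contractible Sasaki manifold, and then use the functoriality of the join construction to reduce the pullback computation in $\Psh(X)$ to the defining one for $\cN(k,l)$ in $\Psh(\cN)$.

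First, using the K\"ahler decomposition $W = \CC^n \times W'$ in which $W'$ has no further flat K\"ahler factor, let $X_1$ be the (unique) contractible Sasaki manifold with K\"ahler quotient $\CC^n$ and $X_2$ the (unique) contractible Sasaki manifold with K\"ahler quotient $W'$, both provided by Corollary \ref{cor:hom_sasaki}. By Example \ref{ex:SasakiN} and Proposition \ref{pro:heisenberg_unique}, $X_1 = \cN$ with $\Psh(X_1) = \cN \rtimes \mathrm{U}(n)$. Proposition \ref{pro:join} produces a contractible homogeneous Sasaki manifold $X_1 * X_2$ with K\"ahler quotient $\CC^n \times W'$, and the uniqueness in Corollary \ref{cor:hom_sasaki} gives a pseudo-Hermitian isomorphism $X \cong X_1 * X_2$, which I fix once and for all.

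Next, I consider the canonical homomorphism $\Psh(X_1) \times \Psh(X_2) \to \Psh(X_1 * X_2) = \Psh(X)$ from Corollary \ref{cor:join}, whose kernel is the antidiagonal $\Delta = \{(\phi_{1,t}, \phi_{2,-t})\}_{t \in \RR}$ inside the product $T_1 \times T_2$ of Reeb flows. Since $\Delta \cap (\Psh(X_1) \times \{1\}) = \{1\}$, the restriction to $\Psh(X_1) \times \{1\}$ embeds $\Psh(X_1)$ into $\Psh(X)$, and the Reeb flow $T_1$ is carried isomorphically onto the Reeb flow $T$ of $X$. From the commutative diagram \eqref{eq:join_diagram} the Boothby-Wang homomorphism $\phi$ sends this embedded copy of $\Psh(X_1)$ onto $(\CC^n \rtimes \mathrm{U}(n)) \times \{1\} \leq \Isom_{\!h}(W)$. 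Conversely, if the class of $(\tilde g_1, \tilde g_2)$ in $\Psh(X)$ has $\phi$-image in $(\CC^n \rtimes \mathrm{U}(n)) \times \{1\}$, then $\phi_2(\tilde g_2) = 1$, so $\tilde g_2 = \phi_{2,s} \in T_2$ for some $s$; multiplying by $(\phi_{1,s}, \phi_{2,-s}) \in \Delta$ rewrites the class as $(\tilde g_1 \phi_{1,s}, 1)$, which lies in the embedded $\Psh(X_1)$. Thus $\phi^{-1}(\CC^n \rtimes \mathrm{U}(n)) = \Psh(X_1) = \cN \rtimes \mathrm{U}(n)$ as subgroups of $\Psh(X)$.

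Finally, intersecting with the preimage of $\CC(k,l) \leq \CC^n \rtimes \mathrm{U}(n)$ shows that $\tilde \cN$ coincides with the preimage of $\CC(k,l)$ under the restricted Boothby-Wang map $\phi_1: \Psh(X_1) \to \CC^n \rtimes \mathrm{U}(n)$, which is $\cN(k,l)$ by Definition \ref{modicN} and diagram \eqref{eq:cnkl}. The main technical point I expect to watch is the second step: carefully tracking cosets modulo the antidiagonal $\Delta$ in $T_1 \times T_2$ is what ensures that the embedded $\Psh(X_1)$ realizes the \emph{full} Boothby-Wang preimage of the flat-factor isometry group, not merely a subgroup of it.
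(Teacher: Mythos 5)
Your proposal is correct in substance but takes a genuinely different route from the paper. The paper's own proof is a two-line reduction: Proposition \ref{prop:Sasaki_homuni}\,(2) (whose content is the Lie-algebra computation via Lemma \ref{lem:Lie_Hermitian}) identifies the $\phi$-pullback of the whole factor $\CC^n\rtimes \mathrm{U}(n)$ as $\cN\rtimes \mathrm{U}(n)$ directly over the given $W$, and then the pullback $\tilde\cN$ of $\CC(k,l)$ satisfies the defining extension \eqref{eq:cnkl}, so $\tilde\cN=\cN(k,l)$. You instead split $X$ itself as a join $\cN * X_2$ using the uniqueness statement of Corollary \ref{cor:hom_sasaki}, and transport the model subgroup $\cN(k,l)\leq \Psh(\cN)$ into $\Psh(X)$ through the join embedding of Corollary \ref{cor:join} and Proposition \ref{pro:join_funct}. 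Your route is longer, and it still leans on the same underlying fact (the identification $\Psh(\cN)=\cN\rtimes\mathrm{U}(n)$, which the paper itself obtains from Proposition \ref{prop:Sasaki_homuni}\,(2) applied to the flat model), but it buys something the paper's wording glosses over: $\tilde\cN$ is exhibited as the \emph{isomorphic image} of the model group $\cN(k,l)$, compatibly with the Reeb flows and the Boothby--Wang maps, so that the position of $T$ and the nontriviality of the extension class in \eqref{eq:cnkl} (the facts actually used in the proof of Theorem \ref{sasakimodi}) are carried over tautologically rather than matched abstractly. This transport mechanism is close in spirit to how the paper uses the join in the proof of Proposition \ref{pro:pshX_dec}.

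Two points in your write-up need patching, both with tools you already cite. First, your converse step only treats elements of $\Psh(X)$ that are already presented as classes of pairs $(\tilde g_1,\tilde g_2)$, i.e.\ lie in the image of the join homomorphism; to get the \emph{full} preimage you must first show an arbitrary $g\in\phi^{-1}\bigl((\CC^n\rtimes\mathrm{U}(n))\times\{1\}\bigr)$ is of this form. This is one line: write $\phi(g)=(h_1,1)$, lift $h_1$ to some $\tilde g_1\in\Psh(X_1)$ by surjectivity of $\phi_1$ (Proposition \ref{pr:noncompact}, using $H^1(\CC^n)=\{0\}$), and note that $g$ differs from the class of $(\tilde g_1,1)$ by an element of $\ker\phi=T$, which lies in the embedded $\Psh(X_1)$. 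Second, the join argument presupposes that $W'$ is nontrivial and is itself a contractible \emph{homogeneous} K\"ahler manifold (this uses that holomorphic isometries of $W$ split along the factor $\CC^n\times W'$, available here because $W'$ has no flat de~Rham factor, and is automatic in the paper's applications via Proposition \ref{pro:hom_Kaehler}); when $W=\CC^n$ -- a case actually invoked at the end of the proof of Theorem \ref{sasakimodi} -- there is no $X_2$, and you should note that Corollary \ref{cor:hom_sasaki} gives $X\cong\cN$ directly, where the lemma reduces to Definition \ref{modicN}. Finally, a cosmetic point: fix the pseudo-Hermitian isometry $X\cong \cN * X_2$ so that it covers the identity of $W$ (the constructions behind Corollary \ref{cor:hom_sasaki}, namely Propositions \ref{pr:oductiden} and \ref{pr:gauge}, provide this), so that $\phi$-preimages of subgroups of $\Isom_{\! h}(W)$ correspond on the nose rather than up to conjugation.
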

\begin{proof} By Proposition \ref{prop:Sasaki_homuni} (2), 
the pullback of $ \CC^n\rtimes  {\rm U}(n)  \leq  \Isom_{\! h}(W)$   
to the group $\Psh(X)^0$ along the exact sequence  \eqref{eq:Sasaki_fib_groups} 
is $ \cN\rtimes  {\rm U}(n) $. Therefore, the pullback $\tilde \cN$ of  $\CC(k,l)$ satisfies the defining exact sequence \eqref{eq:cnkl} above.  
So 
 $\tilde \cN = \cN(k,l)$ 
\end{proof} 

\smallskip 
\begin{proof}[{\bf Proof of Theorem $\ref{sasakimodi}$}]
Let $G$ be a  contractible unimodular Sasaki group.  
As follows from Theorem \ref{SasakiKprin},  there exists a one-parameter subgroup 
 $$ \fA \leq G $$  such that 
$W = G/\fA$ is a homogeneous K\"ahler manifold for $G$.  

If $\fA$ is a normal subgroup in $G$ (\cf case \eqref{eq:sasakifibKI} of Section \ref{sec:presentations}), then 
$$ \bar G = G/\fA$$  is a K\"ahler group acting simply transitively on $W$, and $\fA$ is, a fortiori, central in $G$. 
Hence, as $G$ is unimodular, so is $ \bar G = G/\fA $.  Therefore, 
Hano's theorem \cite[Theorem II]{jh} implies that $W = \CC^{n}$ is a flat K\"ahler space and 
that $$ \bar G =  \CC(k,l) \;  \leq \; \CC^n\rtimes {\rm U}(n)$$   is a meta-abelian K\"ahler group. 
Since  $G$ is simply connected, the Reeb flow $T$ for the Sasaki manifold $G$ is isomorphic to $\RR$. 
By Lemma \ref{lem:cklback}, this implies that, as a Sasaki Lie group, $$ G = \cN(k,l) \; , $$ 
for some $k,l$,  with $k+l=n$. 

\smallskip We may assume now that $\fA$ is not normal in $G$. Thís is 
case \eqref{eq:sasakifibKII} in Section \ref{sec:presentations}.  The presentation \eqref{eq:sasakifibKII} 
for $W$ is then a fiber bundle of the form 
\begin{equation*}  
\begin{CD} 
1@>>> \fA @>>> G@>{\mathsf{q}}>>  W = \bar G /\bar \fA  @>>>1 \;  , 
\end{CD}
\end{equation*}
where $$ \bar G =  G/\mathcal{Z} \; , $$ with $\mathcal{Z}$ a discrete subgroup in the center of $G$, and $\bar G$ is acting faithfully on $W$. 
In particular, $\bar G$ is a unimodular group of K\"ahler isometries acting transitively on the contractible K\"ahler manifold $W$. 
By Proposition \ref{prop:Sasaki_homuni}, $$ W = \CC^{n} \times  D , $$ where $D = S_0/K_0$ is a symmetric bounded domain. Therefore, 
$$ \bar G \leq\left(\CC^{{n}} \rtimes {\mathrm U}(n)\right)  \times S_{0} , $$ where $S_{0}  = \Isom(D)^{0}$ is a semisimple Lie group of hermitian type. Projecting $\bar G$ to $S_{0}$, with kernel $$ L = \bar G \cap \left(\CC^{{n}} \rtimes {\mathrm U}(n)\right) \; , $$ the image of $\bar G$ in $S_{0}$ is a unimodular group, acting transitively on $D $. By Hano's theorem, the image of $\bar G$ must be semisimple. Therefore it is all of $S_{0}$.  From the Levi-decomposition theorem, we infer that 
$$\bar G = L \cdot S_{0}$$ is an almost semi-direct product.
Therefore,  $$ \dim \bar G = \dim L + \dim S_{0}  =  \dim W + 1 \, \leq \,\dim L +  (\dim S_{0} - \dim K_{0 }) +1 \; . $$ 
This implies  $ \dim K_{0} \leq 1$. 

\smallskip
Suppose first that  $D$ is non-trivial. Then we have that $$ D = \HH^1_\CC$$ is biholomorphic to the hyperbolic plane, $S_{0}$ is isomorphic to $\PSL(2,\RR)$ and $K_{0}$ is a circle group. It follows that the above kernel $L$ of the projection $\bar G \to S$ acts simply transitively on  the factor $\CC^{n}$ of $W$. Hence, $L$ is a flat K\"ahler Lie group, and therefore $ L= \CC(k,l)$. 
By Lemma \ref{lem:cklback}, the preimage of $L$ in $\Psh(G)$ under the Boothby-Wang homomorphism 
is a subgroup $$ \cN(k,l) \leq \Psh(G) \; ,  $$  which contains the Reeb flow $T$ in its center. Since  $G$ is covering $\bar G$,  $G$ contains a subgroup $$ \tilde L = \cN(k,l) \cap G$$ as a covering group of $L$. Therefore,  
$$  \cN(k,l)= T \cdot \tilde L $$ 
is an almost semi-direct product. This is contradicting the fact
that the extension class of the exact sequence in the bottom row of \eqref{eq:cnkl} is non-trivial 
(compare Lemma \ref{lem:Lie_Hermitian}). The contradiction implies that the factor $\CC^{n}$ must be trivial. Thus, $$W = D = \HH^1_\CC$$ is a K\"ahler manifold of constant negative curvature. Hence,  $$ G \, = \, \widetilde {\SL(2,\RR)} $$ is the universal covering group of $S_0 = \PSL(2,\RR)$ with a standard Sasaki structure over $\HH^1_\CC$. 

\smallskip 
It remains to exclude the case that  $D$ is trivial. Suppose we have  $$W  =  \CC^{n} = \bar G/ \bar \fA \; . $$ Since any reductive subgroup of isometries on $\CC^{n}$ has a fixed point, the circle group $\bar \fA$ must be  a maximal reductive subgroup of $\bar G$. We deduce that $\bar G$ is a solvable Lie group with maximal compact subgroup $\bar \fA$. Thus there exists a simply connected solvable normal subgroup $\bar G_{0}$ such that $$ \bar G =  \bar G_{0} \rtimes \bar \fA$$ (see e.g.\ \cite[Lemma 2.1]{bk}). It follows that $\bar G_{0} =  
\CC(k,l)$ is a flat K\"ahler Lie group. As above, this implies that $$  \cN(k, l) = T \cdot \left(G \cap \cN(k,l)\right)$$ is an almost semi-direct product, which is not possible. Hence, the case $D$ is trivial cannot occur, unless $\fA$ is normal in $G$. 
\end{proof}


\section{Examples}\label{sec:2}
We give further explicit  examples of locally homogeneous aspherical Sasaki manifolds.

\subsection{Sasaki manifolds modeled over complex hyperbolic spaces} \label{sect:hnexample}
The complex hyperbolic space is 
described as the homogeneous manifold
$$ \displaystyle \HH^n_\CC={\rm PU}(n,1) \big/ {\rm U}(n) =
{\rm SU}(n,1) \big/\; { {\rm S}  \left( {\rm U}(n)\times{\rm U}(1) \right) }  \; . $$  
Consider the following diagram 
of principal bundle fiberings:
\begin{equation*}\label{CeterSasaki}\begin{CD}
\RR =  \widetilde{{\rm U}(1)}@>>>  X = \widetilde{ {\rm U}(n,1)}\big/ \,  { \widetilde{{\rm U}(n)}} @>\tilde{P}>>   \displaystyle \HH^n_\CC= {\rm PU}(n,1)/{\rm U}(n)\\
@VV{\big/\ZZ}V @VV{\big/\ZZ}V ||@.\\
S^1 = {\rm U}(1) @>>> Y =  {\rm U}(n,1)\big/\, {\rm U}(n)@>P>>   \displaystyle \HH^n_\CC= {\rm PU}(n,1)/{\rm U}(n)\\
\end{CD} \; \; ,  \vspace{1ex} 
\end{equation*} 
where the inclusions of  $\widetilde{{\rm U}(n)}$, $\widetilde{{\rm U}(1)}$ arise
from the standard  embedding $${\rm U}(n) \times {\rm U}(1) \to  {\rm U}(n,1) \;  . $$ 

\begin{remark}
Denoting  with $  \pi:    \; \widetilde{ {\rm U}(n,1)} \to   {\rm U}(n,1)$  
the universal covering group of $ {\rm U}(n,1)$, we declare connected subgroups 
$$\widetilde{{\rm U}(n)} = \, \pi^{-1}({\rm U}(n))^{{0}}  \,  ,   \;  {\widetilde{{\rm SU}(n,1)} = \, \pi^{-1}({\rm SU}(n,1))^{0} } \; . $$ 
Then  $\widetilde{ {\rm U}(n)}$ is  a universal covering group for\/ ${\rm U}(n)$,
and the  kernel $\cZ \; (\cong \ZZ) $  of the latter covering is contained in 
the center of the group $$ \widetilde{ {\rm U}(n,1)} \; . $$  
This gives rise to the above (non-faithful) homogeneous presentation
of  the universal covering space $X$ for $Y$ in the diagram. It also follows  that
$$  \Psh(X)^{0} =   \widetilde{ {\rm U}(n,1)} \big/ \cZ  =  \,  { \widetilde{{\rm SU}(n,1)} \cdot  \widetilde{{\rm U}(1)} } \; . $$ 
\end{remark} 

A pseudo-Hermitian structure $\{\om,J\}$ 
on $Y = X \big/ \ZZ$ is obtained as a  
\emph{connection bundle} over $\HH^n_\CC$  such that   
$ P^*\Omega=d\om $, 
for the K\"ahler form $\Omega$ of  $\HH^n_\CC$. Here $S^1$ becomes the Reeb flow for $\om$ on $Y$, and 
{$$\Psh(Y)^{0} =  {\rm U}(n,1) \; . $$}
The pseudo-Hermitian structure $(\tilde \om, J)$ on $X$ is a lift of $\om$. 
Note also that $$ Y =  {\rm SU}(n,1)\big/ {\rm SU}(n) \text{ and }  X =  \widetilde{ {\rm SU}(n,1)}\big/ {\rm SU}(n)\;  $$
are \emph{faithful} presentations as homogeneous Sasaki manifolds of simple Lie groups. 
Taking a torsionfree discrete uniform subgroup
$\Gamma$ of $ {\rm SU}(n,1)$ (such a subgroup exists by \cite{borel}, for example),  
gives rise to a  \emph{regular}  locally homogeneous aspherical Sasaki manifold with Boothby-Wang fibering 
\begin{equation}\begin{CD}
S^1@>>>\Gamma \, \big\backslash \, {\rm SU}(n,1)\big/\, {\rm SU}(n)@>>>Q \, \backslash \HH^n_\CC \; , 
\end{CD}\end{equation}
 where $Q\leq {\rm PU}(n,1)$ is a  torsionfree  discrete uniform subgroup (isomorphic to $\Gamma$). 

\subsection{Join of locally homogeneous Sasaki manifolds} 
As above   let $$X_{\CC} =   \widetilde{{\rm SU}(n,1)} \big/ {{\rm SU}(n)} $$ 
denote the contractible Sasaki homogeneous space over $\HH^{n}_{\CC}$.  
(Compare Section \ref{sect:hnexample}).  We may take the join (see Proposition \ref{pro:join}) with the Sasaki Heisenberg group $\cN$ to obtain a contractible homogeneous Sasaki manifold: 
\begin{equation*}\label{eq:hypbun}
\begin{CD}
\RR@>>>  X = \; (\cN \times X_{\HH^n_\CC}) \big/ \Delta @>{\sf q}>>\CC^k\times \HH^n_\CC\\
@. ||@. ||@. \\
\RR@>>> \left(\cN \cdot  \widetilde{{\rm SU}(n,1)}\right) \big/ \, {\rm SU}(n)
@>{\sf q}>>\CC^k\times  {\rm SU}(n,1) \big/\; { {\rm S}  \left( {\rm U}(n)\times{\rm U}(1) \right) } 
\end{CD}\end{equation*}
A pseudo-Hermitian structure $\{\om,J\}$
on $$  \cN *  X_{\HH^n_\CC} =  (\cN \times X_{\HH^n_\CC}) \big/ \Delta$$
is obtained as the quotient of $\om_0+\tilde \om$, where $\omega_{0}$ is the 
 contact form on $\cN$, $\tilde \om$ on  $X_{\HH^n_\CC}$ (see Proposition \ref{pro:join}). 
Taking a suitable torsionfree discrete uniform subgroup $\pi$ from
\[\Psh(X)^0=(\cN\rtimes {\rm U}(k)) * \Psh(X_{\HH^n_\CC})  =
(\cN\rtimes {\rm U}(k))\cdot  \widetilde{{\rm SU}(n,1)}\]
allows to construct a compact locally homogeneous aspherical Sasaki 
manifold over a product of compact K\"ahler manifolds:
\begin{equation*}
\begin{CD}
S^1@>>> \pi\, \backslash (\cN *X_{\HH^n_\CC})@>{\sf q}>>
T_\CC^k\times Q\backslash \HH^n_\CC \:  .
\end{CD}\end{equation*}

\subsection{Heisenberg Sasaki manifolds} 
%
%
Recall from the construction in \eqref{eq:cnkl} that the Sasaki Lie groups  $$ \cN(k,l)$$  are contained in the pseudo-Hermitian group $\Psh(\cN) = \cN \rtimes \mathrm{U}(k)$ of the Heisenberg Sasaki group $\cN$.  Therefore, taking  quotients of 
 $\cN(k,l)$ by discrete uniform subgroups gives rise to: 

\smallskip 
\paragraph{\em Circle bundles over flat K\"ahler manifolds} 
Let $\Delta$ be a discrete uniform subgroup of $\cN(k,l)$. Then 
$$  M = \; \Delta \backslash \, \cN(k,l) $$ 
is a locally homogeneous $\cN(k,l)$-manifold. Since $\cN(k,l) \leq \Psh(\cN)$ acts simply transitively on $\cN$, $\Delta \leq  \Psh(\cN)$ acts properly discontinuously  as a discrete group of holomorphic 
isometries 
on $\cN$. Therefore $$ M  =  \, \cN \big/ \, \Delta $$
is also quotient of $\cN$ as a locally homogeneous manifold modeled on the homogeneous space  $\cN$. 
Moreover, the proof of Theorem \ref{undrphi} part (3), 
together with the exact sequence  \eqref{eq:cnkl},  show that 
$\Delta$ is a central extension of $p(\Delta)$, where $p(\Delta)$ is a uniform lattice
in  $\CC(k,l)$.  
This gives rise to a circle bundle 
$$   S^{1} \to  \; \Delta \backslash \cN  \; \to \; \, p(\Delta)\backslash \CC(k,l)  \; , $$ 
where the K\"ahler solvmanifold $p(\Delta)\backslash \CC(k,l)$ is a torus bundle over a torus,  and it is finitely covered  by a complex compact torus $T^n_\CC = \CC^{n}/\Lambda$, $\Lambda$  
isomorphic to $\ZZ^{2n}$ (compare \cite{ha}), where the  K\"ahler metric on  
$T^n_\CC$ is flat. 

\subsection{Locally homogeneous manifold  $\pi \backslash \cN$ which is not Sasaki} \label{sect:nonS}

We explicitly construct an example of  a Riemannian metric which is locally a Sasaki metric but does not admit a compatible structure vector field $\cA$.
(See also $(7)$ in the introduction, following Remark \ref{rem:seifert}).

\begin{exam} \label{ex:quotient_tau} 

Let $$ \Lambda = \ZZ \times (\ZZ^{n} + i \ZZ^{n} )\,  \subseteq \, \cN = \RR \times  \CC^{n}$$  be the integral 
lattice in $\cN$. Clearly, $\Lambda$ is a subgroup and $\tau \Lambda = \Lambda$, where as in \eqref{eq:tau}, 
$$  \displaystyle \tau(t,z)=(-t,\bar z) . $$ 
Next put $\alpha_s =(0, (s,0,\ldots,0))$,  $\mu = \alpha_{1 \over 2} \tau$ and let $$ \pi = \langle \mu, \Lambda \rangle \leq \; \cN \rtimes \tau$$
be the group generated by $\mu$ and $\Lambda$. Since $\mu^{2} = \alpha_{1} \in \Lambda$ and $\mu \Lambda \mu^{-1}  =  \Lambda$,  the group $\pi$ satisfies 
an exact sequence $$ \displaystyle 1\ra\, \Lambda \ra\, \pi \to \, \ZZ_2\to 1 \; . $$
Since $\mu$ is of infinite order $\pi$ must be torsionfree (see Lemma \ref{lem:tf} below).
\end{exam}

\begin{lemma} \label{lem:tf} 
$\pi$ is torsion-free. 
\end{lemma}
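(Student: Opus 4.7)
The plan is to analyze the two cosets of $\Lambda$ in $\pi$ separately. By construction we have the exact sequence $1 \to \Lambda \to \pi \to \ZZ_{2} \to 1$, so every element of $\pi$ is either in $\Lambda$ or of the form $\lambda \mu$ with $\lambda \in \Lambda$. Since $\Lambda$ is a discrete subgroup of the simply connected nilpotent Lie group $\cN$, it is automatically torsion-free; hence the only torsion that could appear must come from elements in the non-trivial coset.

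First I would make explicit the conjugation action of $\mu = \alpha_{1/2}\tau$ on $\Lambda$. Using the Heisenberg product \eqref{eq:glN} and the fact that $\tau$ is the automorphism $(t,z)\mapsto(-t,\bar z)$, a direct computation gives $\mu\lambda\mu^{-1} = \tau(\lambda)$ for every $\lambda\in\Lambda$ (the central correction produced by $\alpha_{1/2}$ cancels because $\alpha_{1/2}$ sits in the $t$-direction and the imaginary-part term in \eqref{eq:glN} vanishes against the zero $z$-component of $\alpha_{1/2}$).

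Next I would compute the square of an arbitrary element $\nu = \lambda\mu$ of the non-trivial coset. Using $\mu^{2} = \alpha_{1}$ and the conjugation formula above,
\[
\nu^{2} \; = \; \lambda\,(\mu\lambda\mu^{-1})\,\mu^{2} \; = \; \lambda\cdot\tau(\lambda)\cdot \alpha_{1}.
\]
Writing $\lambda = (a,\, u+iv)$ with $a\in\ZZ$, $u,v\in\ZZ^{n}$ and evaluating using \eqref{eq:glN}, the $z$-coordinate of $\lambda\cdot\tau(\lambda)$ is $2u$ and its $t$-coordinate is $-\,\mathrm{Im}({}^{t}\bar{b}\bar{b}) = 2\langle u,v\rangle$, where $b = u+iv$; multiplying by $\alpha_{1} = (1,0)$ on the right yields
\[
\nu^{2} \; = \; \bigl(2\langle u,v\rangle + 1,\; 2u\bigr) \; \in \; \Lambda.
\]
The decisive observation is that the $t$-coordinate $2\langle u,v\rangle + 1$ is an odd integer, hence non-zero, so $\nu^{2}$ is a non-trivial element of $\Lambda$. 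Since $\Lambda$ is torsion-free, $\nu^{2}$ has infinite order, and therefore so does $\nu$.

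The only real obstacle is getting the Heisenberg group law exactly right when computing $\mu\lambda\mu^{-1}$ and the product $\lambda\cdot\tau(\lambda)\cdot\alpha_{1}$; but this is a routine calculation, and the shift by the central element $\alpha_{1}$ built into $\mu^{2}$ is precisely what guarantees that the $t$-coordinate of $\nu^{2}$ cannot vanish. Combining the two cases then shows $\pi$ is torsion-free.
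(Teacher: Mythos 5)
Your overall strategy---reduce to the coset $\Lambda\mu$, square, land in $\Lambda\setminus\{1\}$, and use torsion-freeness of $\cN$---is exactly the paper's, but your computation rests on a misreading of $\alpha_s$. The paper defines $\alpha_s=(0,(s,0,\ldots,0))$: a translation by the \emph{real horizontal} vector $(s,0,\ldots,0)\in\CC^n$, with zero $t$-component, not the central element $(s,0)$ as your parenthetical remark assumes. This matters twice. First, $\alpha_{1/2}$ is not central, so $\mu\lambda\mu^{-1}\neq\tau(\lambda)$ in general: writing $\lambda=(a,u+iv)$, conjugation by $\alpha_{1/2}$ contributes a central correction, giving $\mu\lambda\mu^{-1}=(-a+v_1,\,u-iv)$. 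Second---and this would be fatal to your version---if $\alpha_{1/2}$ \emph{were} central, then since $\tau$ negates the $t$-coordinate one would have $\tau(\alpha_{1/2})=\alpha_{1/2}^{-1}$, hence $\mu^2=\alpha_{1/2}\,\tau(\alpha_{1/2})=1$, so $\mu$ would be $2$-torsion and the lemma would be \emph{false}. Your simultaneous use of $\mu^2=\alpha_1$ and $\alpha_1=(1,0)$ is therefore internally inconsistent; the identity $\mu^2=\alpha_1$ holds precisely because $\alpha_{1/2}$ is a real horizontal translation fixed by $\tau$.

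With the correct $\alpha_{1/2}$, an element $\nu=\lambda\mu$ of the non-trivial coset is $\nu=\gamma_0\tau$ with $\gamma_0=\lambda\alpha_{1/2}=\bigl(a+\tfrac{v_1}{2},\,(u+h)+iv\bigr)$, where $h=(\tfrac12,0,\ldots,0)$, and a direct computation with \eqref{eq:glN} and \eqref{eq:tau} gives
\[
\nu^2 \;=\; \gamma_0\,\tau(\gamma_0) \;=\; \bigl(2\langle u,v\rangle+v_1,\; 2u+e_1\bigr), \qquad e_1=(1,0,\ldots,0),
\]
rather than your $(2\langle u,v\rangle+1,\,2u)$. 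In particular the $t$-coordinate of $\nu^2$ need not be odd and can vanish: taking $\lambda=1$ gives $\mu^2=(0,e_1)$. The decisive non-vanishing sits in the $z$-coordinate, whose first entry $2u_1+1$ is odd; hence $\nu^2\in\Lambda\setminus\{1\}$ and $\nu$ has infinite order. This is exactly how the paper argues: $\gamma=\gamma_0\tau$ is torsion if and only if the $z$-part of $\gamma_0$ is purely imaginary, whereas for $\gamma\in\pi\setminus\Lambda$ the real part of its first entry lies in $\ZZ+\tfrac12$. So your plan is sound, but the conjugation formula and the ``odd $t$-coordinate'' step as written are false and must be replaced by the computation above.
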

\begin{proof}
Recall that every non-trivial element of $\cN$ has infinite order.
Let $\gamma = \gamma_{0} \tau$, where $\gamma_{0} \in \cN$. If $\gamma$ has finite order, 
so has $\gamma^2 =  \gamma_{0} \gamma_{0}^{\tau} \in \cN$. Thus $\gamma^2 = 1 \in \cN$. Writing 
$\gamma_{0} =(t, w)$, we have by \eqref{eq:glN} and  \eqref{eq:tau} that 
$$ \gamma^2  = (t, w) \cdot (-t, \bar w) = ( -{\rm Im}({}^t\bar w \bar w), w + \bar w) = (0,0) \; . $$ 
That is, $\gamma$ is a torsion element, if and only if $w$ is purely imaginary. 
Assuming now that $\gamma \in \pi$, we have $\gamma_{0} = \lambda \alpha_{1 \over 2}$, where 
$\lambda \in \Lambda$ is integral. This shows that the vector 
$w$ for $\gamma_{0}$ has a non-trivial real part (in its first entry). Hence, $\gamma$ is not a torsion element. 
So $\pi$ is torsionfree,
\end{proof}

Since $\pi$ is without torsion, the quotient space
$$    \pi \, \backslash \, \cN $$
is a compact infra-nilmanifold. 
Since $\pi \leq  \Isom(\cN,g_{0})$, for the Sasaki metric $g_0$ on $\cN$ (as in Example \ref{ex:SasakiN}), 
there is an induced Riemannian metric $\hat g_0$ on $\pi \backslash \cN$, which is locally the same as the 
Sasaki metric $g_{0}$. But $(\pi \backslash \cN,\hat g_0)$ never admits a compatible Sasaki structure. That is,
there exists no pseudo-Hermitian structure $(\hat \eta, \hat J')$ on $ \pi \backslash \cN$ such that 
$\displaystyle 
\hat g_0=\hat\eta\cdot \hat\eta+d\hat\eta\circ \hat J'$: 

\begin{lemma}\label{nosasaki}
The infra-nilmanifold $(\pi\backslash \cN,\hat g_0)$ does not admit any
compatible Sasaki structure.
\end{lemma}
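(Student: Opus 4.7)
The plan is to pull any hypothetical Sasaki structure on $M = \pi\backslash\cN$ back to the universal cover $\cN$ and invoke the uniqueness of the Reeb field for a fixed complete Sasaki metric, recalled from \cite{tanno,kashi,tachiyu} in the introduction.

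I would assume for contradiction that there exists a pseudo-Hermitian structure $\{\hat\eta,\hat J'\}$ on $M$ with $\hat g_0 = \hat\eta\cdot\hat\eta + d\hat\eta\circ\hat J'$. Since the covering map $\cN\to M$ is a local isometry with respect to $g_0$ and $\hat g_0$, I would lift $\{\hat\eta,\hat J'\}$ to a pseudo-Hermitian structure $\{\eta, J'\}$ on $\cN$ whose associated Sasaki metric is exactly $g_0$ and whose Reeb field $\tilde\cA$ is invariant under the deck transformation group $\pi$.

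Next I would invoke the uniqueness statement from the introduction. Here $(\cN,g_0)$ is not isometrically covered by a round sphere (as $\cN$ is non-compact) and is not three-Sasaki (three-Sasaki manifolds are compact with finite fundamental group). Hence the set of Reeb fields compatible with $g_0$ consists precisely of $\{\cA,-\cA\}$, where $\cA = \partial/\partial t$ is the standard Heisenberg Reeb field. Thus $\tilde\cA \in \{\cA,-\cA\}$. Finally I would compute the action of $\mu = \alpha_{1/2}\tau \in \pi$ on $\cA$: left translation by $\alpha_{1/2}$ preserves the left-invariant field $\cA$, while $\tau(t,z) = (-t,\bar z)$ satisfies $\tau_*(\partial/\partial t) = -\partial/\partial t$, so $\mu_*\cA = -\cA$. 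In particular $\mu_*(\pm\cA) = \mp\cA$, so neither $\cA$ nor $-\cA$ is $\mu$-invariant, contradicting $\pi$-invariance of $\tilde\cA$.

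The only step requiring care is the lifting in paragraph two, namely the verification that $\{\eta,J'\}$ really is a pseudo-Hermitian structure on $\cN$ whose associated Sasaki metric coincides with $g_0$; this is immediate from the fact that the defining relations of a pseudo-Hermitian structure and of its Sasaki metric are purely local and tensorial, hence pull back verbatim along the local isometry $\cN\to M$. Once this is in place, the contradiction reduces to the one-line computation $\mu_*\cA = -\cA$.
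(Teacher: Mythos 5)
Your proof is correct, and it takes a genuinely different route from the paper's at the decisive step. Both arguments begin identically (lift the hypothetical structure to $\cN$ and observe that the lifted Reeb field $\tilde\cA$ is preserved by the deck group $\pi$), and both end with the same contradiction (the element $\mu=\alpha_{1/2}\tau$ inverts rather than preserves the Reeb field, since $\tau_*(\partial/\partial t)=-\partial/\partial t$). The difference is how $\tilde\cA$ gets identified with $\pm\cA$. You invoke the Tanno--Kashiwada--Tachibana-Yu uniqueness theorem recalled in the introduction: $(\cN,g_0)$ is complete, non-compact, hence neither covered by a round sphere nor three-Sasaki, so the structure fields compatible with $g_0$ are exactly $\{\cA,-\cA\}$. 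The paper never uses that theorem here; instead it argues group-theoretically inside $\Isom(\cN,g_0)=\cN\rtimes({\rm U}(k)\rtimes\ZZ_2)$ (Note \ref{note:isomN}): the lifted Reeb flow $T'$, being connected, lies in $\cN\rtimes{\rm U}(k)$; since $\pi\leq\Psh(\cN,\{\eta,J'\})$ and pseudo-Hermitian transformations centralize the Reeb flow (see \eqref{CPsh}), $T'$ centralizes the lattice $\pi\cap\cN$, which is uniform in $\cN$ by the Auslander--Bieberbach theorem, whence by Mal'cev's unique extension property $T'$ centralizes all of $\cN$ and must equal the central flow $T$; then $\pi\leq C_{\Isom(\cN,g_0)}(T)=\cN\rtimes{\rm U}(k)$, contradicting that $\pi$ surjects onto $\ZZ_2$. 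What each approach buys: yours is shorter and reduces the lemma to the one-line computation $\mu_*\cA=-\cA$, at the cost of importing heavy curvature-theoretic classification results (which the paper does rely on, but only in the proof of Theorem \ref{iso=equal}, where non-compactness plays the same role); the paper's version is self-contained modulo the explicit computation of $\Isom(\cN,g_0)$ via Wilson's theorem and standard lattice rigidity, and as a by-product pins down $T'=T$ exactly, not merely up to sign.
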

\begin{proof}
Suppose $(\pi\backslash \cN,\hat g_0)$
admits a Sasaki structure $(\hat\eta, \hat J')$ such that $\displaystyle 
\hat g_0=\hat\eta\cdot \hat\eta+d\hat\eta\circ \hat J'$.
Let $\eta$ be a lift of $\hat\eta$ to $\cN$,  for which
$\displaystyle 
g_0=\eta\cdot \eta+d\eta\circ J'$ is a Sasaki metric on $\cN$. 
Moreover, 
\begin{enumerate}
\item[{\rm (1)}]  $(\eta,J')$ is a standard pseudo-Hermitian structure on $\cN$.
\item[{\rm (2)}] $\pi\leq \Psh(\cN, \{\eta,J'\})\leq \Isom(\cN,g_0)=
\cN\rtimes ({\rm U}(k)\rtimes \ZZ_2)$.
\item[{\rm (3)}] with respect to the inclusion in (2), $\pi$ maps onto  $\ZZ_2$.
\end{enumerate}
Let $T'$ be the one-parameter group of the Reeb field for $\eta$.
As $T'$ 
is contained in the isometry group of $g_{0}$, and $T'$ 
is connected, it follows  that $$ T'\leq \cN\rtimes {\rm U}(k)$$  by (2). 
In particular,  $T'$ normalizes $\cN$. 
Since $T$ is the lift of the Reeb flow on $\pi\backslash  \cN$, 
it centralizes $\pi$ and $\pi\cap \cN$ (also by (2)). 
Since $\pi\cap \cN$ is discrete uniform in $\cN$ (by 
the Auslander-Bieberbach theorem \cite{ab}),
$T'$ centralizes $\cN$ by the Mal'cev unique extension property.
Since $T'\leq \cN\rtimes {\rm U}(k)$,
this implies 
$T'=C(\cN)=T$ is  the one-parameter subgroup of the Reeb field for $\om_0$.
As $\pi$ centralizes $T$, it follows
$\pi\leq \cN\rtimes{\rm U}(k)$.
This contradicts (3).
\end{proof} 

Therefore the  compact locally homogeneous aspherical manifold
$\pi\backslash \cN$ admits a locally Sasaki metric but it is not a Sasaki manifold. In addition
$\Isom(\pi\backslash \cN, \hat g_{0})$ is finite, and 
$\pi\backslash \cN$ is an $S^1$-fibred infranil-manifold without
any $S^1$-action. 

\frenchspacing

\end{document}